\title{The genus of curve, pants and flip graphs}
\author{Hugo Parlier and Bram Petri}
\date{\today}
\thanks{Research supported by Swiss National Science Foundation grant number PP00P2\_128557.}
\newtheorem{thm}{Theorem}[section]
\newtheorem{prp}[thm]{Proposition}
\newtheorem{cor}[thm]{Corollary}
\newtheorem{lem}[thm]{Lemma}
\newtheorem*{thmA}{Theorem A}
\newtheorem*{thmB}{Theorem B}
\newtheorem*{thmC}{Theorem C}
\newtheorem{dff}{Definition}[section]
\newcommand{\Pro}[2]{\mathbb{P}_{#1}\left[#2\right]}
\newcommand{\ProC}[3]{\mathbb{P}_{#1}\left[#2\mid \;#3\right]}
\newcommand{\ExV}[2]{\mathbb{E}_{#1}\left[#2\right]}
\newcommand{\verz}[2]{\left\{ #1 \,:\, #2\right\}}
\newcommand{\rij}[3]{\left\{#1\right\}_{#2}^{#3}}
\newcommand{\abs}[1]{\left| #1 \right|}
\newcommand{\aant}[1]{\#\left( #1 \right)}
\newcommand{\floor}[1]{\left\lfloor #1 \right\rfloor}
\newcommand{\ceil}[1]{\left\lceil #1 \right\rceil }
\newcommand{\symm}[1]{\mathrm{S}_{#1}}
\newcommand{\CG}[1]{\mathcal{C}\left(#1\right)}
\newcommand{\MC}[1]{\mathcal{MC}\left(#1\right)}
\newcommand{\PG}[1]{\mathcal{P}\left(#1\right)}
\newcommand{\MP}[1]{\mathcal{MP}\left(#1\right)}
\newcommand{\FG}[1]{\mathcal{F}\left(#1\right)}
\newcommand{\MF}[1]{\mathcal{MF}\left(#1\right)}
\newcommand{\MCG}[1]{\mathrm{Mod}\left(#1\right)}
\begin{document}

\maketitle


\begin{abstract}
This article is about the graph genus of certain well studied graphs in surface theory: the curve, pants and flip graphs. We study both the genus of these graphs and the genus of their quotients by the mapping class group. The full graphs, except for in some low complexity cases, all have infinite genus. The curve graph once quotiented by the mapping class group has the genus of a complete graph so its genus is well known by a theorem of Ringel and Youngs. For the other two graphs we are able to identify the precise growth rate of the graph genus in terms of the genus of the underlying surface. The lower bounds are shown using probabilistic methods. 
\end{abstract}

\section{Introduction}

A certain number of simplicial complexes and graphs have been very useful in the study of Teichm\"uller spaces, mapping class groups and hyperbolic $3$-manifolds. These include the curve, pants, and arc complexes and the related curve, pants and flip graphs. 
Viewed as metric spaces, the large scale geometry of these spaces has been extensively investigated. They all have (with the exception of some low complexity examples) the mapping class group as set of simplicial automorphisms but their geometries are very different. We'll be interested in a topological property: the {\it graph genus} of the three graphs as well as the genus of their quotients by the mapping class group.

The curve graph records the isotopy classes of curves on a fixed surface and their intersection properties. Curve graphs have played a key role in the understanding of the geometry of Teichm\"uller space, hyperbolic $3$-manifolds and mapping class groups (see for example \cite{Min1}, \cite{Min2}, \cite{Raf}, \cite{Ham}, \cite{BMM}, \cite{BCM} and \cite{MM2}). The geometry of the curve graph in particular has been studied in detail - it is a Gromov hyperbolic metric space and quasi-isometric to an electrified version of Teichm\"uller space \cite{MM2}. More recently it was proved independently by Aougab in \cite{Aou}, Bowditch in \cite{Bow2}, and Clay, Rafi and Schleimer \cite{CRS} that the hyperbolicity constant is in fact uniform (independent of the topology). In \cite{HPW}, Hensel, Webb and Przytycki prove that the constant is at most $17$. 

The mapping class group of a surface acts naturally on the corresponding curve graph through its action on isotopy classes of curves. The quotient graph under the action of the mapping class group is what we will call the modular curve graph. It essentially follows from the classification of finite type surfaces that, at least for closed surfaces, this is a complete graph with additional loops attached to vertices. With this observation, the graph genus of the modular curve graph is equivalent to a well-known theorem of Ringel and Youngs \cite{RY} about the genus of the complete graph. In some sense this is one of our starting points; for the other two graphs we consider the question does not boil down to a well known theorem.

Pants graphs have pants decompositions as vertices and have edges between vertices if the two corresponding pants decompositions can be related by a so-called elementary move. Although related to the curve graph, the pants graph has a very different geometry and is only Gromov hyperbolic in a few low complexity cases, as proved by Brock and Farb in \cite{BF}. Brock also proved that it is quasi-isometric to Teichm\"uller space with the Weil-Petersson metric. The mapping class group acts on the pants graph as well and the quotient of this action, the modular pants graph, has a purely graph theoretic interpretation. Vertices are homeomorphism types of pants decompositions and so can be encoded by isomorphism types of trivalent graphs. Elementary moves on pants decompositions then correspond to natural transformation of these graphs (sometimes called $L$-moves or Whitehead moves). 

The final type of graphs that we consider is formed by the flip graphs. These are graphs of triangulations of punctured surfaces, which share an edge if they are related by a so called flip (or equivalently if they differ by a single arc). These graphs are related to a number of different topics including cell decompositions of {\it decorated} Teichm\"uller space (see for example \cite{Pen1} \cite{Pen2}). Again, the mapping class group is generally the full automorphism group of the graph, a result of Korkmaz and Papadopoulos \cite{KP} recently extended to injective maps between different flip graphs by Aramayona, the first author and Koberda in \cite{AKP}. In the particular case where the surface is a polygon with $n$ sides, the geometry of these flip graphs was investigated by Sleator, Tarjan and Thurston in \cite{STT}. They proved that the diameter of these graphs was $2n-10$ for large enough $n$ (this was recently found to be true for $n\geq 12$ by Pournin \cite{Pou}). In general these graphs aren't finite though. However, the quotient of this graph by the mapping class group - the modular flip graph - is finite and every vertex in the flip graph has finite degree. The \v{S}varc-Milnor lemma then implies that the flip graph is a coarse model for studying the geometry of the mapping class group. 

When describing these three graphs, a first example is often the torus with a marked point or puncture. The pants graph and the curve graph coincide in this case and are isomorphic to a Farey graph. Via the relationship between curves and arcs on a once punctured torus, it is not too difficult to see that the corresponding flip graph is the dual graph to the Farey graph (with its usual disk embedding). This dual graph is an infinite trivalent tree. So in all three cases, the graphs are planar. Our first observation is that in general these graphs are far from being planar.
\begin{thmA}
Except for in a limited number of exceptional cases, the curve graph, the pants graph and the flip graph all have infinite genus.
\end{thmA}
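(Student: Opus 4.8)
The plan is to reduce everything to two classical facts about the (orientable) graph genus: it does not increase when one passes to a subgraph, and it is additive over connected components (the theorem of Battle, Harary, Kodama and Youngs). Together these say that if a graph $G$ contains, for every $n$, a subgraph that is a disjoint union of $n$ non-planar graphs, then $G$ has infinite genus. So for each of the three families it is enough to (1) isolate the exceptional surfaces, which should be exactly the low-complexity ones on which the graph is planar -- empty, a single point, an infinite trivalent tree, or a Farey graph -- and then, for every remaining surface $S$, (2) exhibit one finite non-planar subgraph $H$ of the graph, and (3) produce infinitely many pairwise vertex-disjoint copies of $H$.

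For step (3) I would fix a pseudo-Anosov $\phi \in \MCG{S}$, which exists as soon as $\xi(S) \geq 1$, and use its translates $\phi^{k}(H)$. The vertex set of $H$ is a finite set of curves (respectively pants decompositions, respectively ideal triangulations), and a pseudo-Anosov has no periodic multicurve; hence for any two vertices $v_a, v_b$ of $H$ there is at most one integer $d$ with $v_a = \phi^{d} v_b$, so only finitely many integers arise this way. Choosing powers $k_1 < k_2 < \cdots$ whose pairwise differences avoid that finite set makes the subgraphs $\phi^{k_i}(H)$, each isomorphic to $H$, pairwise vertex-disjoint, so the subgraph formed by the first $n$ of them is a disjoint union of $n$ non-planar graphs.

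Step (2) is where the real content sits. For the curve graph it is free in high complexity: any pants decomposition of $S$ spans a clique, so $\CG{S} \supseteq K_{\xi(S)}$, which is already non-planar once $\xi(S) \geq 5$; moreover, if $S$ has a separating curve cutting it into two pieces of complexity at least $1$ then the two curve graphs join into a complete bipartite subgraph $K_{\infty,\infty}$, which contains infinitely many vertex-disjoint copies of $K_{3,3}$, so in that range one can even skip step (3). There is no such shortcut for the pants and flip graphs, but an essential subsurface $W \subseteq S$ induces embeddings $\CG{W} \hookrightarrow \CG{S}$, and -- after fixing a pants decomposition, respectively an ideal triangulation, of the complement of $W$ -- also $\PG{W} \hookrightarrow \PG{S}$ and $\FG{W} \hookrightarrow \FG{S}$. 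This reduces step (2) to finding a non-planar subgraph on the finitely many surfaces of complexity $2$ (plus, for the flip graph, the smallest punctured cases), which I would do by drawing an explicit subdivision of $K_5$ or $K_{3,3}$ among concrete curves, pants decompositions, or triangulations.

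The main obstacle I expect is precisely this last, hands-on part: pinning down the complete and correct list of exceptional surfaces -- for the flip graph these include the once-punctured torus, whose flip graph is the planar tree mentioned above, and the thrice-punctured sphere, whose mapping class group is finite so the flip graph is a finite (hence finite-genus) graph, and one must verify that nothing else is planar -- and producing the explicit non-planar configurations on the small surfaces. The flip graph is the delicate case, since it carries no clique and no evident bipartite structure, so its non-planarity on, say, the four-holed sphere and the twice-punctured torus has to be established by a direct combinatorial analysis of ideal triangulations and their flips.
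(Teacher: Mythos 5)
Your proposal is correct in outline but takes a genuinely different route from the paper. The paper shows infinite genus by exhibiting, inside each of $\CG{\Sigma_g}$, $\PG{\Sigma_g}$ and $\FG{\Sigma_{g,n}}$, a \emph{single connected} subgraph whose genus grows without bound, then estimating that genus from below via Ringel's formula for $\gamma(K_{m,m})$ or the Beineke--Harary inequality of Proposition~\ref{prp_general}: for the curve graph they place a $K_{m,m}$ spanned by the Dehn-twist orbits $D_{\beta_1}^k\alpha_1$ and $D_{\beta_2}^k\alpha_2$ of two disjoint dual pairs; for the pants graph they build a grid-like subgraph $Z_m$ with about $(m+2)^2$ vertices and $4m^2$ edges by twisting two curves inside two disjoint one-holed tori; for the flip graph they embed (truncations of) a $\mathbb{Z}^2\times\{0,1\}$ lattice produced from a cylinder triangulation and a quadrilateral. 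You instead reduce to a single fixed non-planar subgraph $H$, propagate it to infinitely many pairwise vertex-disjoint copies via powers of a pseudo-Anosov (this step is sound: a pseudo-Anosov fixes no essential curve, arc, pants decomposition or triangulation, so for each pair of vertices of $H$ only finitely many powers can identify them), and then invoke additivity of genus over connected components. Both routes work, and yours is attractively uniform across the three graphs and would in principle pin down exactly the exceptional surfaces. What it buys at a cost, though, is the reliance on the Battle--Harary--Kodama--Youngs additivity theorem and on the hands-on construction in step~(2), which you openly leave undone for the pants and flip graphs: you still need to exhibit a $K_5$ or $K_{3,3}$ subdivision in a small pants or flip graph before the pseudo-Anosov step has anything to replicate, and the subsurface-embedding reduction for the flip graph is itself not entirely routine (the subsurface must carry a puncture of $S$ as an ideal vertex, and one must check the induced map is injective on vertices and edges). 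The paper sidesteps all of this by constructing the growing subgraph directly in the high-complexity surface, at the price of stating its theorems only for closed $\Sigma_g$ with $g\geq 2$ (curve, pants) and $\Sigma_{g,n}$ with $g\geq 2$, $n>0$ (flip) rather than giving the complete list of exceptions you are aiming for.
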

We treat the graphs separately so the above theorem is a combination of Theorems \ref{thm_cg}, \ref{thm_pg} and \ref{thm_fg}.

Once we quotient the graphs by their automorphisms, the resulting graphs are finite so they have finite genus. Let $\Sigma_g$ be a closed genus $g$ surface and $\Sigma_{g,1}$ be a surface of genus $g$ with a single marked point. As mentioned above, the theorem of Ringel and Youngs about the genus of complete graphs and some simple observations about topological types of curves imply that  implies that the genus of the modular curve graph is exactly
$$
\gamma(\MC{\Sigma_{g}}) = \ceil{\frac{(\floor{\frac{g}{2}}-2)(\floor{\frac{g}{2}}-3)}{12}}.
$$
We're able to identify how the genus grows in function of the topology of the underlying surface for the other two graphs. 
\begin{thmB} The genus of the modular pants graph of $\Sigma_g$ grows like
$$
 \frac{1}{\sqrt{2g-2}}\cdot\left(\frac{6g-6}{4e} \right)^g
$$
for $g\rightarrow\infty$.

The genus of the modular flip graph of $\Sigma_{g,1}$ grows like 
$$
 \frac{1}{(4g-2)^{3/2}} \left(\frac{12g-6}{e}\right)^{2g}.
$$
for $g\rightarrow\infty$.\end{thmB}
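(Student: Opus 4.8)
The plan is to sandwich the genus between two quantities that are both controlled by the number of vertices and edges of the graph, and then to estimate those. For a connected graph $G$ with $V$ vertices and $E$ edges one may delete all loops and all parallel edges without changing the genus, so we may assume $G$ is simple; a minimal-genus embedding is cellular, so Euler's formula reads $2\gamma(G)=E-V+2-F$ with $F\geq 1$, giving $\gamma(G)\leq\tfrac{E-V+1}{2}$, while every face is bounded by at least three edges, so $F\leq\tfrac{2E}{3}$ and hence $\gamma(G)\geq\tfrac{E}{6}-\tfrac{V}{2}+1$. Consequently, whenever $E/V\to\infty$ we get $\gamma(G)=\Theta(E)$, so the whole problem reduces to: (i) counting the vertices; (ii) bounding the number of edges above by a crude degree count; (iii) bounding the number of edges below -- the last being where a probabilistic argument is required.

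For (ii): a vertex of $\MP{\Sigma_g}$ is an isomorphism type of connected trivalent multigraph on $2g-2$ vertices, and an elementary move chooses one of the $3g-3$ curves and replaces it in one of at most two ways -- the Dehn twist ambiguity along that curve is realised by a mapping class fixing the rest of the decomposition, hence collapses in the quotient -- so every vertex has degree at most $2(3g-3)$ and $E\leq(3g-3)V$; likewise a vertex of $\MF{\Sigma_{g,1}}$ is an isomorphism type of triangulation of $\Sigma_{g,1}$, which has $6g-3$ arcs and each flip replaces one arc in essentially one way, so $E\leq\tfrac{(6g-3)V}{2}$. In either case $\gamma=O(gV)$. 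For (i): the number of vertices of $\MP{\Sigma_g}$ is the number of isomorphism types of connected trivalent multigraphs on $2g-2$ vertices, and the number of vertices of $\MF{\Sigma_{g,1}}$ is the number of one-vertex triangulations of the genus-$g$ surface, equivalently of cubic unicellular maps of genus $g$ with $4g-2$ vertices; both are provided by classical asymptotic enumeration -- the pairing/configuration-model count for regular graphs in the first case, Harer--Zagier/unicellular-map enumeration in the second -- and a Stirling estimate turns $gV$ into the expressions in the statement. Together with (ii) this already yields the upper bounds.

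For (iii) I need $E\geq\Omega(g)\cdot V$, i.e.\ that the average degree grows linearly in $g$; since $2E=\sum_{[P]}\deg_G([P])$ it is enough that a uniformly random vertex have degree at least $cg$ with probability $1-o(1)$. For $\MP{\Sigma_g}$ this follows from standard facts about random trivalent graphs: with probability tending to $1$ such a graph has trivial automorphism group and only boundedly many short cycles, loops and parallel edges, and from this one checks that for all but $o(g)$ of its edges the elementary move yields a genuinely new graph, and that the resulting graphs are pairwise non-isomorphic -- an isomorphism between two of them would, by local sparsity, have to carry the site of one flip to the site of the other, and by asymmetry this forces the two edges to coincide. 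Hence a $(1-o(1))$-fraction of the vertices have degree $3g-3-o(g)$, so $E\geq\tfrac12(3g-3)(1-o(1))V$, and the Euler bound gives $\gamma\geq\Omega(gV)$, matching the upper bound. For $\MF{\Sigma_{g,1}}$ one runs the same scheme with a suitable model of a random triangulation of $\Sigma_{g,1}$ -- say a uniform random cubic unicellular map of genus $g$, accessed either through Chapuy's recursive/bijective description of unicellular maps or by conditioning a uniform gluing of $4g-2$ triangles on attaining the maximal genus -- and, after verifying the analogous asymmetry and local-sparsity statements, concludes that for almost every vertex the $6g-3$ arc-flips produce $6g-3-o(g)$ pairwise distinct isomorphism types.

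The main obstacle is step (iii) for the flip graph. Random trivalent graphs are extremely well understood, so the pants-graph lower bound is essentially bookkeeping; but random triangulations of a surface whose genus grows linearly -- and in particular the low-probability event of maximal genus on which one conditions -- are far more delicate, and establishing asymmetry together with the principle that "a flip is detected by the isomorphism type" in that setting is the real work. A recurring minor technicality is the passage between labelled objects, on which the random models live, and isomorphism classes, which are the vertices of the modular graphs; this is harmless precisely because almost every object is asymmetric, but it has to be carried out.
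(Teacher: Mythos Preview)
Your overall architecture is exactly the paper's: sandwich the genus using the Euler/girth inequalities (their Proposition~2.1), count vertices via the configuration model for cubic multigraphs (respectively one--vertex triangulations), bound degrees above by $6g-6$ (respectively $6g-3$), and for the lower bound show that a generic vertex has near--maximal degree. The upper bounds, the vertex enumeration, and the labelled--to--unlabelled passage are all handled as you describe.

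The gap is in your step~(iii), specifically in the sentence ``an isomorphism between two of them would, by local sparsity, have to carry the site of one flip to the site of the other, and by asymmetry this forces the two edges to coincide.'' Neither clause is justified. If $P_1$ and $P_2$ are obtained from $P$ by moves at distinct edges $e_1,e_2$ and $\varphi:P_1\to P_2$ is an isomorphism, then composing with the tautological vertex bijections $F_i:P\to P_i$ gives a permutation $\pi=F_2^{-1}\varphi F_1\in\symm{2g-2}$ that is \emph{not} an automorphism of $P$: it only preserves all but at most four edges. Triviality of $\mathrm{Aut}(P)$ says nothing about such near--automorphisms, and there is no local feature at a flip site that an isomorphism is forced to respect --- after the move the neighbourhood is just another piece of a cubic graph. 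So ``asymmetry $+$ few short cycles $\Rightarrow$ few coincidences among flips'' is precisely the statement that needs proof, not an input.

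This is where the paper spends its effort. It introduces the \emph{edge defect} $\mathrm{ED}_\pi(\Gamma)=\#\{e\in E:\pi(e)\notin E\}$ and proves (Proposition~3.6, by adapting Wormald's automorphism count) that for any fixed $n,k$ a random cubic graph with few short cycles a.a.s.\ carries no $n$ nontrivial permutations of edge defect $\le k$ and support $\ge k-1$. Each lost edge in $\MP{\Sigma_g}$ produces such a $\pi$ with $\mathrm{ED}_\pi\le 4$ and support $\ge 3$, so this proposition converts directly into ``for every $\delta>0$ there is $m_\delta$ with $\mathbb P[\deg\ge 6g-6-m_\delta]\ge 1-\delta$''. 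For the flip graph the same scheme is rerun after conditioning on the one--puncture event; the needed analogue (Proposition~4.3) holds because subgraph probabilities are still $O(N^{-\#E(H)})$ in that conditioned model. You correctly flag this conditioned model as the hardest part, but the pants--graph case already requires the edge--defect proposition, which your sketch does not supply.
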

In fact, what we prove is precise estimates on the growth rates with explicit constants.  The  more precise statements are those of Theorems \ref{thm_mp} and \ref{thm_mf}. The upper bounds are obtained using standard techniques to bound the genus of a graph. The lower bounds are much more difficult and we use probabilistic arguments in both cases. To do so, we need to adapt results about the probability that a random graph has an automorphism. As a by-product of what we need to prove, we end up counting the number of isomorphism classes of cubic multigraphs (by multigraph we mean a graph where one is allowed multiple edges between vertices and loops). Since the first version of the paper was released, we found a result of Wormald that can be used to do this count. However, our proof is different so we think it might be of independent interest.
\begin{thmC} \cite{Wor2} Let $I_N$ denote the number of isomorphism classes of cubic multigraphs on $N$ vertices. Then:
$$
I_N \sim \frac{e^{2}}{\sqrt{\pi N}} \left(\frac{3N}{4e}\right)^{N/2}
$$
for $N\rightarrow\infty$.
\end{thmC}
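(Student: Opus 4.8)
The plan is to count \emph{labelled} cubic multigraphs first and then to pass to isomorphism classes by an automorphism count. Note that $N$ must be even, since a cubic multigraph on $N$ vertices has $3N$ half-edges; for $N$ odd both sides vanish. Write $L_N$ for the number of cubic multigraph structures on the labelled vertex set $\{1,\dots,N\}$. To estimate $L_N$ I would use the configuration model: attach three half-edges to each vertex and let $\pi$ range over the $(3N-1)!!$ perfect matchings of the resulting $3N$ half-edges, each of which contracts to a labelled cubic multigraph. The group $(\mathrm{S}_3)^N$ acts on matchings by permuting the half-edges at each vertex, and two matchings contract to the same labelled multigraph precisely when they lie in one orbit; counting orbits by stabilisers gives
$$
L_N \;=\; \frac{1}{6^N}\sum_{\pi}\bigl|\mathrm{Stab}(\pi)\bigr|\;=\;\frac{(3N-1)!!}{6^N}\cdot\mathbb{E}\bigl[\,\bigl|\mathrm{Stab}(\pi)\bigr|\,\bigr],
$$
the expectation over a uniform $\pi$. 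A short check shows $\bigl|\mathrm{Stab}(\pi)\bigr| = 2^{\lambda(\pi)}\prod_j m_j!$, where $\lambda(\pi)$ is the number of loops of the contracted graph and the $m_j\in\{2,3\}$ are the multiplicities of its multiple-edge classes, these features occupying pairwise disjoint vertex sets since a cubic vertex has only three half-edges. By the standard Poisson approximation for loops and short cycles in the configuration model the number of loops and the number of parallel-edge pairs converge jointly to independent $\mathrm{Poisson}(1)$ variables (triple edges being negligible), and with the usual moment bounds this convergence passes to the expectation, yielding $\mathbb{E}\bigl[\,|\mathrm{Stab}(\pi)|\,\bigr]\to e^{2}$. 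Hence $L_N\sim e^{2}(3N-1)!!/6^N$; substituting $(3N-1)!!=(3N)!/\bigl(2^{3N/2}(3N/2)!\bigr)$ and applying Stirling's formula gives $L_N/N!\sim \tfrac{e^{2}}{\sqrt{\pi N}}\bigl(\tfrac{3N}{4e}\bigr)^{N/2}$.

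To go from $L_N$ to $I_N$ I would apply the orbit-counting lemma to the action of $\mathrm{S}_N$ on labelled cubic multigraphs: as the stabiliser of a structure $G$ is its automorphism group,
$$
I_N\;=\;\frac{1}{N!}\sum_{\sigma\in\mathrm{S}_N}\#\{G:\sigma G = G\}\;=\;\frac{L_N}{N!}\cdot\mathbb{E}\bigl[\,\bigl|\mathrm{Aut}(G)\bigr|\,\bigr],
$$
the last expectation over a uniform labelled cubic multigraph $G$. So it remains to show $\mathbb{E}\bigl[\,|\mathrm{Aut}(G)|\,\bigr]\to 1$, equivalently $\sum_{\sigma\neq 1}\#\{G:\sigma G = G\}=o(L_N)$. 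The approach is to bound $\#\{G:\sigma G=G\}$ for each non-identity $\sigma$ in terms of its cycle type --- again via the configuration model, by counting $\sigma$-invariant matchings --- and to sum, the heuristic being that a non-identity vertex permutation forces a rigid matching between the orbits it moves, which is improbable.

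This last step is where the real work lies, and I expect it to be the main obstacle. It is the cubic-multigraph version of the theorem of Bollob\'as (and, in sharper forms, of McKay and Wormald) that a random regular graph almost surely has trivial automorphism group, but the adaptation is not automatic: loops and multiple edges increase local symmetry --- a ``barbell'' component (two loops joined by an edge) or a $\theta$-graph component is fixed by a transposition --- so such configurations genuinely contribute to the sum and must be controlled; and one needs the estimate on the \emph{expected} number of automorphisms rather than merely that $\mathrm{Aut}(G)$ is typically trivial, since $|\mathrm{Aut}(G)|$ can be as large as $2^{N/2}(N/2)!$ (for a disjoint union of $\theta$-graphs). Carrying out the count over all cycle types, the dominant contributions come from permutations with small support (products of a bounded number of transpositions), each contributing $O(L_N/N)$ to the sum, and the total is $o(L_N)$.

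Putting the two steps together, $I_N = (1+o(1))\,L_N/N!\sim \tfrac{e^{2}}{\sqrt{\pi N}}\bigl(\tfrac{3N}{4e}\bigr)^{N/2}$, which is the assertion.
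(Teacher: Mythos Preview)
Your approach is essentially the paper's: both count labelled cubic multigraphs via the configuration model---the paper writes $\aant{\mathcal{G}_N}=\frac{\aant{\Omega_N}}{6^N}\ExV{N}{2^{X_1+X_2}}$ and computes the expectation by an explicit factorial-moment calculation, which is exactly your orbit-counting formula $L_N=\frac{(3N-1)!!}{6^N}\mathbb{E}\bigl[|\mathrm{Stab}(\pi)|\bigr]$ with the Poisson limit giving $e^2$---and then pass to isomorphism classes by controlling automorphisms. One point where you are actually more careful than the paper: the paper shows only that the \emph{probability} of a nontrivial automorphism tends to zero (for a uniform labelled multigraph, via dominated convergence against the Poisson bounds) and then asserts $\aant{\mathcal{G}_N}/\aant{\mathcal{U}_N}\sim N!$, whereas you correctly observe that what is really needed is $\mathbb{E}\bigl[|\mathrm{Aut}(G)|\bigr]\to 1$, i.e.\ $\sum_{\sigma\neq e}\aant{\{G:\sigma G=G\}}=o(L_N)$, and flag this as the main work---this is indeed the content of the Bollob\'as/McKay--Wormald arguments, and the paper is tacitly relying on it.
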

Wormald actually stated this theorem for general $k$-regular multigraphs and also the techniques in our proof generalize to higher bounded degree, but because we need the result only in the case of degree $3$, we restrict to this case.

The article is organized as follows. We begin with a preliminary section where we recall definitions and basic results about the genus of a graph and the three graphs we'll be interested in. We then have a section about random graphs where we recall some of the tools we need and where we prove others including Theorem C above. In Section 4 we discuss random triangulations; in particular we explain and adapt previous results of the second author and other known results. The subsequent sections are dedicated, in order, to our results concerning the curve graph, the pants graph and the flip graph. 

\section{Preliminaries}

In this section we describe the three types of graphs we consider and recall the definition of the genus of a graph. We begin with the graph genus.

\subsection{The genus of a graph} 

The genus of a graph is a measure of its topological complexity and is defined as follows:
\begin{dff} Let $\Gamma$ be a graph. The genus of $\Gamma$ is given by:
$$
\gamma(\Gamma) = \min\verz{g}{\text{there exists a continuous injection }f:\Gamma\rightarrow\Sigma_g}
$$
where the minimum of the empty set is $\infty$.
\end{dff}

The main tool we will use to get bounds on the genera of graphs is the following proposition. The lower bound is due to Beineke and Harary and  the upper bound is a well known fact that essentially bounds the genus of a graph in terms of its Betti number (or cycle rank):
\begin{prp}\cite{BH1}\label{prp_general} Let $\Gamma$ be a connected graph with $p$ vertices, $q$ edges and girth $h$ then:
$$
1+\frac{1}{2}\left(1-\frac{2}{h}\right)q-\frac{1}{2}p \; \leq \; \gamma(\Gamma) \; \leq \; \frac{1}{2} + \frac{1}{2}q-\frac{1}{2}p
$$
\end{prp}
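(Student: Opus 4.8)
The plan is to derive both inequalities from a single application of Euler's formula, once the embedding has been made cellular. Since $\Gamma$ is finite it embeds in some $\Sigma_g$, so $\gamma:=\gamma(\Gamma)<\infty$, and we may fix an embedding of $\Gamma$ into $\Sigma_\gamma$ realizing the genus. The first step is to recall the classical fact (a theorem of Youngs) that a minimum-genus embedding of a connected graph is cellular: every component of $\Sigma_\gamma\setminus\Gamma$ is an open disk. (One can also reprove this by hand: if some face were not a disk one could compress it and lower the genus, contradicting minimality.) Writing $f$ for the number of faces, Euler's formula now reads $p-q+f=2-2\gamma$, and the two bounds will be two ways of estimating $f$.

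For the upper bound, the only input needed is $f\geq 1$, which holds because $\Sigma_\gamma\setminus\Gamma$ is a nonempty open subset of a surface and hence has at least one component. Substituting $f\geq 1$ into Euler's formula gives $2-2\gamma\geq p-q+1$, i.e.\ $\gamma\leq\frac{1}{2}+\frac{1}{2}q-\frac{1}{2}p$; equivalently, the genus is at most half the cycle rank $q-p+1$, which is the asserted upper bound.

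For the lower bound one uses the girth. Assume $\Gamma$ contains a cycle, so that $h<\infty$ (if $\Gamma$ is a forest there is nothing to prove). The key claim is that in the cellular embedding every facial walk has length at least $h$. The boundary walk of a disk face is a closed walk in $\Gamma$; if it is a cycle it has length $\geq h$ by definition of the girth, and if it is not a cycle then it traverses some edge twice (a bridge) or repeats a vertex, and in either situation one can extract from it a closed walk of positive length, which must contain a cycle of length strictly less than that of the facial walk — here one uses the elementary fact that every closed walk of positive length contains a cycle — so if the facial walk had length less than $h$ we would obtain a cycle shorter than $h$, a contradiction. Granting the claim, summing the lengths of all facial walks counts each edge exactly twice, so $2q=\sum_{\text{faces}}\ell(\text{face})\geq hf$, hence $f\leq 2q/h$. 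Feeding this into Euler's formula, $2-2\gamma=p-q+f\leq p-q+\frac{2q}{h}$, and rearranging gives $\gamma\geq 1+\frac{1}{2}\left(1-\frac{2}{h}\right)q-\frac{1}{2}p$.

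The genuinely delicate point — and the reason the lower bound is the cited result of Beineke and Harary rather than a remark — is the facial-length claim: one must rule out short non-simple facial walks, which is also precisely why the forest case has to be excluded rather than absorbed into a convention for the girth. The other ingredient that needs care is that Euler's formula is only available because the minimum-genus embedding is cellular; once that is in hand, the rest is bookkeeping.
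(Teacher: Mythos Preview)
Your strategy matches the paper's sketch for the lower bound (Youngs' cellularity theorem, then Euler's formula with $f\leq 2q/h$), and for the upper bound your use of $f\geq 1$ in the minimum-genus embedding is a valid alternative to the explicit construction the paper alludes to.

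The gap is in your justification that every facial walk has length at least $h$. The ``elementary fact that every closed walk of positive length contains a cycle'' is false: the walk $u\to v\to u$ along a single edge, or more generally any closed walk whose image is a tree, contains no cycle. So your inductive extraction of shorter closed sub-walks can simply terminate in such a backtracking walk without ever producing a cycle, and nothing you wrote rules out a face whose boundary is a short tree-walk. What one actually needs is that in a cellular embedding of a connected graph with finite girth, no facial walk traverses each of its edges an even number of times. This holds for a structural reason you do not invoke: if every edge bounding a face $F$ had $F$ on both sides, then at each vertex $v$ on $\partial F$ one checks from the rotation at $v$ that every corner is an $F$-corner, hence every edge of $\Gamma$ at $v$ lies on $\partial F$; connectedness forces $\partial F=\Gamma$, so $F$ is the unique face and $|w_F|=2q\geq 2h$. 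Otherwise some edge is traversed exactly once, the once-traversed edges form a nonempty even subgraph and therefore contain a cycle of length at most $|w_F|$, and the girth bound follows. (Incidentally, your parenthetical that a twice-traversed boundary edge must be a bridge is also not correct in positive genus.)
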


The proof of the lower bound uses a classical theorem by Youngs (Theorem 4.3 in \cite{You}) which says that every minimal genus embedding $\Gamma\hookrightarrow \Sigma_\gamma$ is such that $\Sigma_\gamma\backslash\Gamma$ is disjoint union of open $2$-cells. The upper bound follows from constructing an explicit embedding. Furthermore, both bounds are sharp. The sharpness of the lower bound can for instance be seen from the $n$-cube skeleton (cf. \cite{Rin1}, \cite{BH2}) and that of the upper bound can be seen from the set of trees.

The main consequence of Proposition \ref{prp_general} that we are interested in is that if we have a sequence of graphs $\rij{\Gamma_n}{n\in\mathbb{N}}{}$ such that:
\begin{equation*}
\frac{p(\Gamma_n)}{q(\Gamma_n)}\rightarrow 0\text{ for }n\rightarrow\infty
\end{equation*}
and $h(\Gamma_n)\geq h\geq 3$ for all $n\in\mathbb{N}$ then:
\begin{equation*}
\left(\frac{1}{2}-\frac{1}{h}\right) q(\Gamma_n) \lesssim \gamma(\Gamma_n) \lesssim \frac{1}{2}q(\Gamma_n) \text{ for }n\rightarrow\infty
\end{equation*}
In other words, just by knowing the number of edges and vertices, we can determine the asymptotic behaviour of the genus of our sequence up to a multiplicative constant.

\subsection{The curve graph} 
In what follows $\Sigma_{g,n}$ will denote an orientable topological surface of genus $g$ with $n$ punctures. We set $\Sigma_g:=\Sigma_{g,0}$. 

A pants decomposition $P$ of $\Sigma_{g,n}$ is a set of isotopy classes of curves on $\Sigma_{g,n}$ such that $\Sigma_{g,n}\backslash P$ is a disjoint union of three-holed spheres (pairs of pants). The complexity $\kappa(\Sigma_{g,n})$ of $\Sigma_{g,n}$ is the number of curves in a pants decomposition of $\Sigma_{g,n}$. Depending on $g$ and $n$ there may not be any pants decompositions so we will suppose that 
$$
\kappa(\Sigma_{g,n}) = 3g-3 +n >0.
$$
When $\kappa(\Sigma_{g,n}) \geq 2$ we can define the curve graph of $\Sigma_{g,n}$ as follows:

\begin{dff} The curve graph $\CG{\Sigma_{g,n}}$ is the graph with:
\vspace{-0.1in}
\begin{itemize}[leftmargin=0.7in]
\item[vertices:] isotopy classes of simple closed curves on $\Sigma_{g,n}$.
\item[edges:] vertices $\alpha$ and $\beta$ share an edge if and only if they can be realized disjointly on $\Sigma_{g,n}$.
\end{itemize}
\end{dff}
(When $\kappa(\Sigma_{g,n}) = 1$ the curve graph is defined analogously to the pants graph which we define in the sequel.) 

We will also be interested in the modular versions of the above graphs. These are the graphs where only the homeomorphism type of the of the objects matter. Specifically:
\begin{dff} The modular curve graph $\MC{\Sigma_{g,n}}$ is the graph with:
\vspace{-0.1in}
\begin{itemize}[leftmargin=0.7in]
\item[vertices:] homeomorphism types of simple closed curves on $\Sigma_{g,n}$.
\item[edges:] vertices $\alpha$ and $\beta$ share an edge if and only if they can be realized disjointly on $\Sigma_{g,n}$.
\end{itemize}
\end{dff}
One can obtain $\MC{\Sigma_{g,n}}$ as a quotient of $\CG{\Sigma_{g,n}}$ by the action of the (extended) mapping class group $\MCG{\Sigma_{g,n}}$ defined as follows:
\begin{equation*}
\MCG{\Sigma_{g,n}} = \frac{\mathrm{Homeo}\left(\Sigma_{g,n}\right)}{\mathrm{Homeo}_0\left(\Sigma_{g,n}\right)}
\end{equation*}
In particular we note that we are allowing orientation reversing homeomorphisms and homeomorphisms that permute punctures. Except for in a few low complexity cases, the automorphism group of the curve graph is exactly the (extended) mapping class group so this amounts to looking at the curve graph up to simplicial automorphism. 

For $\Sigma_g$, the graph $\MC{\Sigma_{g}}$ is particularly simple. There are exactly 
$$
\floor{ \frac{g}{2}}  +1
$$
homeomorphism types of simple closed curves on a closed surface of genus $g$: there is one non-separating type and $\floor{ \frac{g}{2}} $ separating types depending on how much genus they leave on either side (see Figure \ref{fig:curvetypes} for examples in genus $4$ and $5$). 

\begin{figure}[H]
\begin{center} 
\includegraphics[width=12cm]{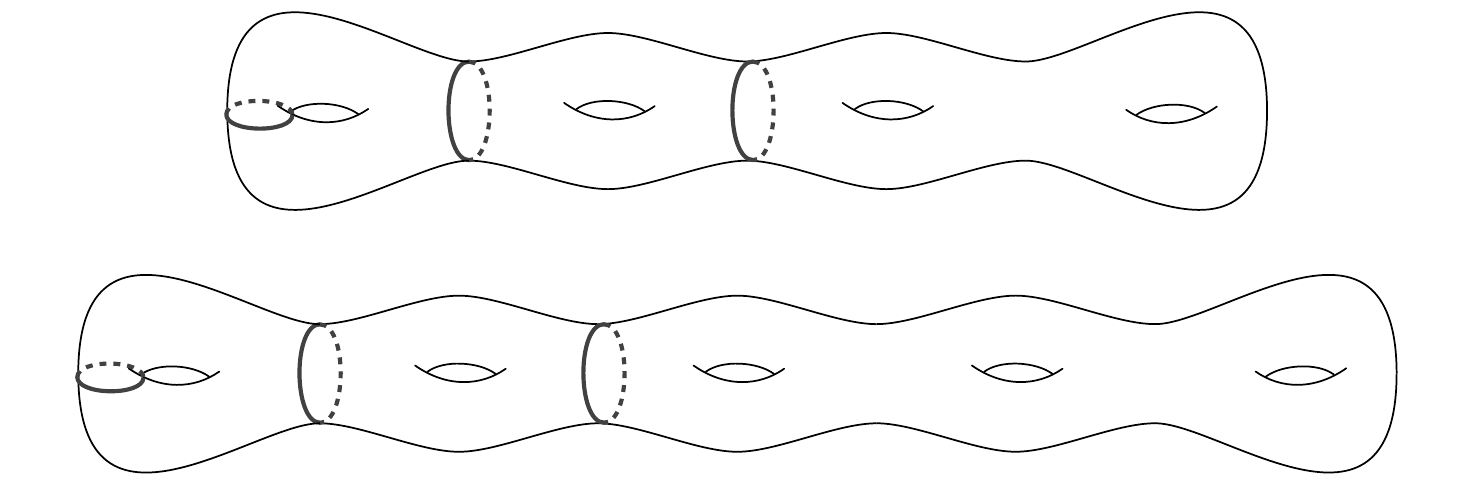} 
\caption{Topological types of curves on surfaces of genus $4$ and $5$}
\label{fig:curvetypes}
\end{center}
\end{figure}

It is not difficult to see that they can be realized disjointly so $\MC{\Sigma_{g}}$ contains a complete graph on $\floor{ \frac{g}{2}}+1$ vertices. In addition there are loops at vertices corresponding to curves that can be realized by two different disjoint isotopy class of curve simultaneously. For odd $g$ this is every curve and for even genus this is every curve except for the separating curve that separates the surface into two parts of genus $\frac{g}{2}$. So the modular curve graph is quite simple and computing its genus is paramount to Ringel and Youngs' computation of the genus of a complete graph.

\subsection{The pants graph} 
On a $\Sigma_{g,n}$ with $\kappa(\Sigma_{g,n}) \geq 1$, the pants graph $\PG{\Sigma_{g,n}}$ is a graph on isotopy classes of pants decompositions of $\Sigma_{g,n}$. An {\it elementary move} on an (internal) curve in a pants decomposition is one of the following:
\vspace{-0.1in}
\begin{itemize}[leftmargin=0.2in]
\item[-] If the curve is a boundary curve of two distinct pairs of pants then an elementary move on this curve consists of replacing it with a curve that intersects it twice.
\item[-] If the curve is a boundary curve of one pair of pants then an elementary move means replacing it with a curve that intersects it once.
\end{itemize}

The figures below illustrate the types of elementary move. Elementary moves 1 and 2 correspond to the first type and elementary move 3 to the second type. 

\begin{figure}[H]
\begin{center} 
\includegraphics[scale=1]{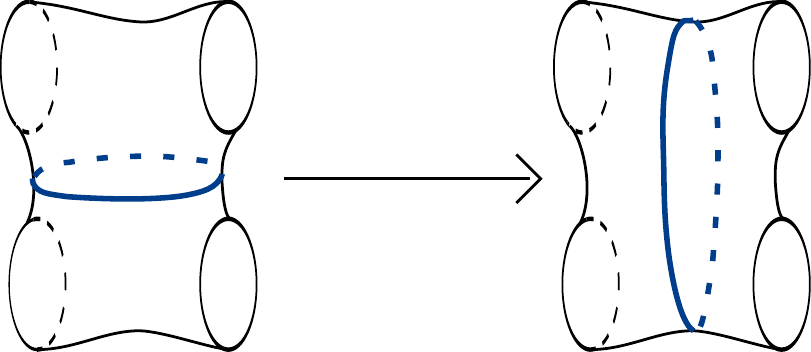} 
\caption{Elementary move 1. The black closed curves are either part of the pants decomposition or boundary components.}
\label{pic2}
\end{center}
\end{figure}

\begin{figure}[H]
\begin{center} 
\includegraphics[scale=1]{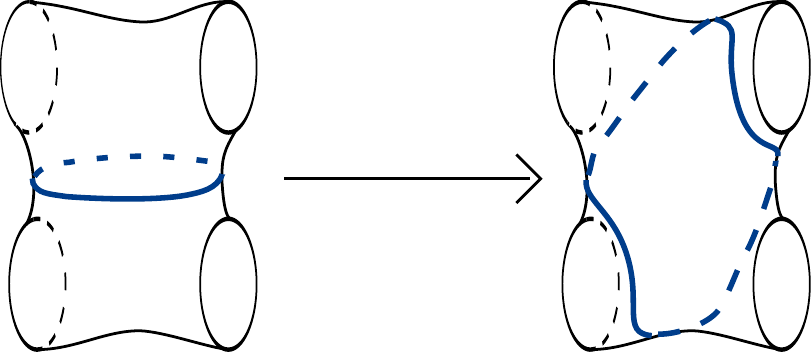} 
\caption{Elementary move 2. The black closed curves are either part of the pants decomposition or boundary components.}
\label{pic3}
\end{center}
\end{figure}

\begin{figure}[H]
\begin{center} 
\includegraphics[scale=1]{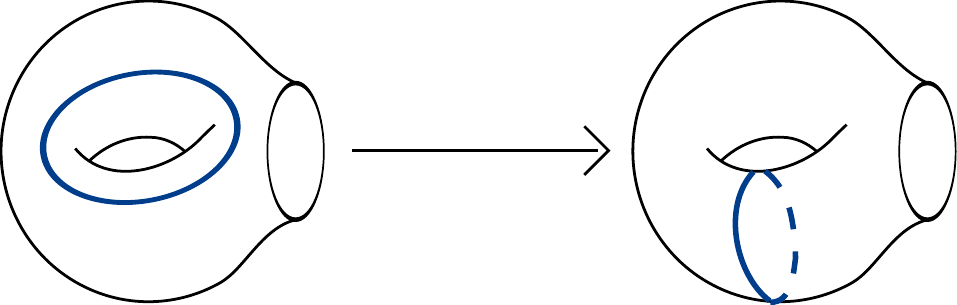} 
\caption{Elementary move 3. The black closed curve is either part of the pants decomposition or a boundary component.}
\label{pic4}
\end{center}
\end{figure}

If we start with a pants decomposition and perform an elementary move on one of its curves, the resulting set of curves will again be a pants decomposition of the surface. Elementary moves will allow us to define edges in the pants graph which we formally define as follows.

\begin{dff} The pants graph $\PG{\Sigma_{g,n}}$ is the graph with:
\vspace{-0.1in}
\begin{itemize}[leftmargin=0.7in]
\item[vertices:] pants decompositions of $\Sigma_{g,n}$.
\item[edges:] vertices $P$ and $P'$ share an edge if $P'$ can be obtained from $P$ by performing an elementary move on one of its curves.
\end{itemize}
\end{dff}

As before, we define the modular pants graph as the graph defined on these objects up to homeomorphism.
\begin{dff} The moduli pants graph $\MP{\Sigma_{g,n}}$ is the graph with:
\vspace{-0.1in}
\begin{itemize}[leftmargin=0.7in]
\item[vertices:] homeomorphism types of pants decompositions of $\Sigma_{g,n}$.
\item[edges:] vertices $P$ and $P'$ share an edge if and only if $P'$ can be obtained from $P$ by performing an elementary move on one of its curves.
\end{itemize}
\end{dff}

We again have an action of $\MCG{\Sigma_{g,n}}\curvearrowright\PG{\Sigma_{g,n}}$ through the action on curves and 
\begin{equation*}
\MP{\Sigma_{g,n}} = \frac{\PG{\Sigma_{g,n}}}{\MCG{\Sigma_{g,n}}}
\end{equation*}
As before, except for some complexity cases, the mapping class group is the automorphism group of the pants graph \cite{Mar}. 

\subsection{The flip graph} 
The final type of graphs we will be interested in is flip graphs. These are graphs of (isotopy classes) of triangulations of $\Sigma_{g,n}$ with vertices that lie in the punctures (and thus we suppose $n>0$). More precisely, by a triangulation we mean a maximal collection of isotopy classes of arcs, disjoint in their interior. These maximal multi-arcs cut the surface into triangles, but are not necessarily proper triangulations as for instance a triangle may be glued to itself. (We refer to arcs of the triangulation as opposed to edges as to distinguish them from the edges of the flip graph itself.)

Given such a triangulation, a flip in one of the arcs of the triangulation consists of replacing this edge with its `opposite diagonal'. Phrased otherwise, a flip is the removal of an arc and its replacement with the only other possible arc that can complete the resulting multi-arc into a triangulation. Note that if an arc only belongs to one triangle, it is impossible to flip it. A flip is illustrated in Figure \ref{pic8}.

\begin{figure}[H]
\begin{center} 
\includegraphics[scale=0.5]{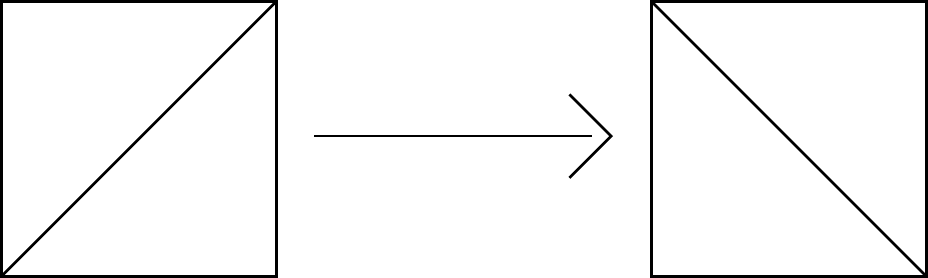} 
\caption{A flip.}
\label{pic8}
\end{center}
\end{figure}

We can now define the flip graph:
\begin{dff} Let $n>0$. The flip graph $\FG{\Sigma_{g,n}}$ is the graph with:
\vspace{-0.1in}
\begin{itemize}[leftmargin=0.7in]
\item[vertices:] isotopy types of triangulations of $\Sigma_{g,n}$ with vertices in the punctures.
\item[edges:] vertices $T$ and $T'$ share an edge if and only if $T'$ can be obtained from $T$ by performing a flip on one of the arcs of $T$.
\end{itemize}
\end{dff}
The modular version of the flip graph is defined as follows.
\begin{dff} Let $n>0$.The modular flip graph $\MF{\Sigma_{g,n}}$ is the graph with:
\vspace{-0.1in}
\begin{itemize}[leftmargin=0.7in]
\item[vertices:] homeomorphism types of triangulations of $\Sigma_{g,n}$.
\item[edges:] vertices $T$ and $T'$ share an edge if and only if $T'$ can be obtained from $T$ by performing a flip on one of the edges of $T$.
\end{itemize}
\end{dff}
Again, it is the quotient of the flip graph by the mapping class group or, except for in some low complexity cases, the quotient of the flip graph by its automorphisms \cite{KP}.

\section{Random regular graphs}\label{sec_randgr}

Both pants decompositions and triangulations can be described using cubic (or trivalent) graphs. This means that we can use results from the theory of random regular graphs to count certain `problematic' pants decompositions and triangulations. In this section we will recall the results we will use from this theory, we will restrict to the cubic case.

We begin with the definition of the set of random cubic graphs:
\begin{dff} Let $N\in 2\mathbb{N}$ then we define the set:
$$
\Omega_N=\left\{\text{partitions of }\{1,2,\ldots,3N\}\text{ into pairs} \right\}
$$
\end{dff}
An element $\omega\in\Omega_N$ corresponds to a cubic graph $\Gamma(\omega)$ with vertex set $\{1,\ldots,N\}$ in the following way: for every element in $k\in\{1,\ldots,3N\}$ we create a half edge with label $k$. We group these half edges in the sets $\{1,2,3\}$, $\{4,5,6\}$,..., $\{3N-2,3N-1,3N\}$ corresponding to vertices $1$, $2$,...,$N$. Gluing these half edges into edges according to $\omega$ gives $\Gamma(\omega)$.

To turn $\Omega_N$ into a probability space we define a probability measure:
\begin{equation*}
\mathbb{P}_N : 2^{\Omega_N} \rightarrow \mathbb{R}
\end{equation*}
by:
\begin{equation*}
\Pro{N}{A} = \frac{\aant{A}}{\aant{\Omega_N}} \text{ for all }A\subset\Omega_N
\end{equation*}
where $2^{\Omega_N}$ denotes the power set of $\Omega_N$.

\subsection{Counting cubic graphs}

The first result we will need does not directly sound like a result on random graphs, but it follows from counting the number of labelled cubic graphs, which was done by Bender and Canfield in \cite{BC} and then considering the probability that such a graph carries a non-trivial automorphism, which was done independently by Bollob\'as in \cite{Bol2} and McKay and Wormald in \cite{MW}. A simple graph is a graph without loops and without multiple edges.
\begin{thm}\label{thm_BC} \cite{BC}, \cite{Bol2}, \cite{MW} Let $I_N^*$ denote the number of isomorphism classes of simple cubic graphs on $N$ vertices. Then:
$$
I_N^* \sim \frac{1}{e^2\sqrt{\pi N}} \left(\frac{3N}{4e}\right)^{N/2}
$$
for $N\rightarrow\infty$.
\end{thm}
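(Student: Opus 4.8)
The plan is to combine the two cited counting inputs with Stirling's formula; $N$ is even throughout and $L_N$ denotes the number of \emph{labelled} simple cubic graphs on $\{1,\dots,N\}$.

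\emph{Step 1 (labelled count via $\Omega_N$).} Fix a labelled simple cubic graph $G$. An $\omega\in\Omega_N$ has $\Gamma(\omega)=G$ exactly when, at each vertex $i$, the three half-edges at $i$ are assigned bijectively to the three $G$-edges incident to $i$ and paired across those edges; since $G$ has no loop and no multiple edge, every such assignment is legitimate and distinct ones give distinct $\omega$. Hence $\aant{\verz{\omega\in\Omega_N}{\Gamma(\omega)=G}}=(3!)^N=6^N$, so
$$L_N=\frac{\aant{\verz{\omega\in\Omega_N}{\Gamma(\omega)\text{ is simple}}}}{6^N}=\frac{\aant{\Omega_N}}{6^N}\,\Pro{N}{\Gamma(\omega)\text{ is simple}},$$
where $\aant{\Omega_N}=(3N-1)!!=\frac{(3N)!}{2^{3N/2}(3N/2)!}$. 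By Bender--Canfield \cite{BC} --- equivalently, by the standard moment computation showing that the numbers of loops and of double edges of $\Gamma(\omega)$ converge jointly to independent Poisson variables with means $1$ and $1$ when $d=3$ --- one has $\Pro{N}{\Gamma(\omega)\text{ is simple}}\to e^{-(1+1)}=e^{-2}$, and therefore $L_N\sim e^{-2}\,\frac{(3N)!}{2^{3N/2}(3N/2)!\,6^N}$.

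\emph{Step 2 (labelled to unlabelled).} An isomorphism class $C$ of cubic graphs on $N$ vertices has $N!/\abs{\mathrm{Aut}(C)}$ labellings, so Burnside's lemma for the $\symm{N}$-action on the set $X$ of labelled simple cubic graphs on $\{1,\dots,N\}$ gives
$$I_N^*=\frac{1}{N!}\Big(\aant{X}+\sum_{\sigma\in\symm{N}\setminus\{\mathrm{id}\}}\aant{\verz{G\in X}{\sigma\in\mathrm{Aut}(G)}}\Big).$$
The first term is $L_N/N!$; I would bound each remaining summand by $\frac{\aant{\Omega_N}}{6^N}\,\Pro{N}{\sigma\in\mathrm{Aut}(\Gamma(\omega))}$ (dropping simplicity and counting in $\Omega_N$), and since $\frac{\aant{\Omega_N}}{6^N}\sim e^{2}L_N$ it is enough that $\sum_{\sigma\neq\mathrm{id}}\Pro{N}{\sigma\in\mathrm{Aut}(\Gamma(\omega))}\to 0$. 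That this sum tends to $0$ --- one evaluates it by grouping $\sigma$ according to cycle type, the transpositions contributing the leading term of order $1/N$ --- is precisely the theorem of Bollob\'as \cite{Bol2} and, independently, of McKay and Wormald \cite{MW}. Hence $I_N^*\sim L_N/N!$. I expect this to be the only delicate point: $I_N^*\ge L_N/N!$ is trivial, but the matching bound genuinely needs these automorphism estimates.

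\emph{Step 3 (Stirling).} Finally, inserting the Step-1 estimate into $I_N^*\sim L_N/N!$ and applying $m!\sim\sqrt{2\pi m}\,(m/e)^m$ to $(3N)!$, $(3N/2)!$ and $N!$, the algebraic prefactors collapse to $1/\sqrt{\pi N}$ and the superexponential parts to $\left(\frac{3N}{4e}\right)^{N/2}$, yielding
$$I_N^*\sim \frac{1}{e^2\sqrt{\pi N}}\left(\frac{3N}{4e}\right)^{N/2}.$$
Steps 1 and 3 are routine bookkeeping; the content is entirely in Step 2.
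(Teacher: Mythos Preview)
Your proposal is correct and follows precisely the route the paper indicates: the paper does not supply its own proof of this theorem but explains just before the statement that it ``follows from counting the number of labelled cubic graphs, which was done by Bender and Canfield in \cite{BC} and then considering the probability that such a graph carries a non-trivial automorphism, which was done independently by Bollob\'as in \cite{Bol2} and McKay and Wormald in \cite{MW}.'' Your Steps 1--3 flesh out exactly this outline, and the Stirling computation in Step~3 checks out; the one point worth flagging is that in Step~2 you invoke the slightly stronger statement $\sum_{\sigma\neq\mathrm{id}}\Pro{N}{\sigma\in\mathrm{Aut}(\Gamma(\omega))}\to 0$ rather than merely $\Pro{N}{\mathrm{Aut}\neq\{e\}}\to 0$, but this is indeed what the first-moment arguments in \cite{Bol2} and \cite{MW} establish.
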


We will also use the result on automorphisms, so we state it separately. $\mathrm{Aut}(\Gamma)$ will denote the automorphism group of the graph $\Gamma$.
\begin{thm}\label{thm_aut} \cite{Bol2}, \cite{MW} We have:
$$
\lim\limits_{N\rightarrow\infty}\Pro{N}{\mathrm{Aut}\neq \{e\}} \; = \; 0
$$
\end{thm}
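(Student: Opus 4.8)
The plan is to run a first-moment argument directly in the pairing model $\Omega_N$. An automorphism of the cubic multigraph $\Gamma(\omega)$ is a permutation $\sigma\in\symm{N}$ of the vertex set preserving all edge multiplicities and loop counts, so if $\mathrm{Aut}(\Gamma(\omega))\ne\{e\}$ then some $\sigma\ne e$ is an automorphism, and the union bound gives
\[
\Pro{N}{\mathrm{Aut}\ne\{e\}}\;\le\;\sum_{\sigma\in\symm{N}\setminus\{e\}}\Pro{N}{\sigma\in\mathrm{Aut}(\Gamma(\omega))}.
\]
The task is to show the right-hand side is $o(1)$. The first step is to estimate a single term. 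The convenient device is to lift $\sigma$ to a permutation $\pi$ of the $3N$ half-edges that respects the partition into vertex triples and induces $\sigma$; then $\sigma\in\mathrm{Aut}(\Gamma(\omega))$ forces $\pi(\omega)=\omega$ for some such lift, and the probability that a uniformly random pairing of $3N$ points is preserved by a fixed permutation depends only on its cycle type --- essentially because $\omega$ is $\pi$-invariant precisely when $\pi$ commutes with the fixed-point-free involution encoded by $\omega$, so the odd cycles of $\pi$ must be matched in equal-length pairs. What one extracts from this is a bound of the essential shape $\Pro{N}{\sigma\in\mathrm{Aut}(\Gamma(\omega))}\le(3N)^{-3k/2}\cdot 2^{O(k)}$, where $k$ is the number of vertices moved by $\sigma$: the $k$ moved vertices reassign $3k$ half-edge ends, which constrain $\omega$ through roughly $3k/2$ essentially independent coincidences, each of probability of order $1/N$.

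The second step compares this with the count: the number of $\sigma\in\symm{N}$ moving exactly $k$ vertices is at most $\binom{N}{k}k!\le N^{k}$, so each moved vertex costs a factor $N$ in the count but gains a factor $(3N)^{-3/2}$ in the probability, a net $N^{-1/2}$ per moved vertex, and $\sum_{k\ge 1}N^{-k/2}2^{O(k)}=O(N^{-1/2})\to 0$ --- provided the corrections are uniformly controlled and the per-permutation bound is made honest. That is where the work lies, and the cleanest route is to peel off small $k$ by hand. The case $k=2$ is already instructive: then $\sigma$ is a transposition of two vertices $u,v$, an automorphism exactly when $u$ and $v$ are \emph{twins} --- they have the same three neighbours, or are joined by an edge and share their other two --- and a direct pairing-model computation shows that a prescribed pair is a twin pair with probability $O(N^{-3})$, so the $O(N^{2})$ transpositions contribute only $O(N^{-1})$. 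This is precisely where degree $\ge 3$ enters: for degree $d$ the twin probability is of order $N^{-d}$, which beats the $O(N^{2})$ pairs only for $d>2$, matching the well-known fact that random $2$-regular graphs are highly symmetric. The remaining small-$k$ permutations --- short cycles on vertices, and permutations with a few short orbits --- are handled in the same spirit: a genuine automorphism forces $\Omega(k)$ essentially independent coincidences, each of probability $O(1/N)$.

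Combining the two regimes --- large $k$, where the invariance probability decays fast enough to overwhelm even the crude total $N!$ of all permutations, and small $k$, handled as above --- gives $\Pro{N}{\mathrm{Aut}\ne\{e\}}\to 0$. The main obstacle is the organisation of the union bound in the small- and intermediate-support regime: one has to group the permutations according to the combinatorics of their orbits on half-edges finely enough that the $O(1/N)$ gain per orbit survives multiplication by the number of permutations of that type and by the subexponential errors in the invariance probability. This careful accounting is the technical heart of the arguments of Bollob\'as and of McKay and Wormald, and it is exactly where the cubic (more generally, degree at least three) hypothesis is indispensable.
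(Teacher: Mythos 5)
The paper states Theorem~\ref{thm_aut} as a cited result of Bollob\'as and of McKay--Wormald and does not prove it, so there is no in-paper argument to compare yours against; the closest in-paper ingredient is Lemma~\ref{lem_wor}, imported from Wormald's simpler proof \cite{Wor}, and that lemma is exactly the quantitative estimate your sketch needs. Your outline --- union bound over $\sigma\in\symm{N}\setminus\{e\}$, lift $\sigma$ to a permutation $\pi$ of the $3N$ half-edges respecting the vertex triples, and bound $\Pro{N}{\pi\omega=\omega}$ by observing that $\omega$ restricted to the $3k$ moved half-edges must be a $\pi$-invariant perfect matching --- is the correct and standard route, and it is the one shared by all three cited sources.

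One quantitative claim is wrong as written: the bound $\Pro{N}{\sigma\in\mathrm{Aut}(\Gamma(\omega))}\leq(3N)^{-3k/2}\cdot 2^{O(k)}$. The number of $\pi$-invariant matchings on the $3k$-point support is \emph{not} $2^{O(k)}$ in general; already when $\pi$ acts as a product of $\Theta(k)$ transpositions one has to choose a pairing of the $\pi$-orbits, which contributes a factor factorial in $k$. The honest correction is of order $k^{\Theta(k)}$ --- this is precisely the $a^{3a/8}$ appearing in Lemma~\ref{lem_wor}, where $a$ is the support size of $\pi$. The argument survives because $3/8<1/2$: one gets $C^a N^{-a/2}a^{3a/8}\leq(C N^{-1/8})^a$ uniformly for $2\leq a\leq N$, so the geometric series over $a$ still closes, and in fact it closes uniformly, so the separate large-$k$/small-$k$ analysis you propose is a good sanity check but is not logically needed once the correct lemma is in hand. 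Your ``net $N^{-1/2}$ per moved vertex'' heuristic thus secretly requires the exponent $3/8$ to lie strictly below $1/2$, which is nontrivial and is exactly where degree $\geq 3$ enters, as your twin computation for $k=2$ illustrates. You are upfront that the careful accounting is deferred to the cited works, which is fair, but the placeholder $2^{O(k)}$ has the wrong shape, not merely a loose constant; there is also a preliminary reduction in Wormald's argument to permutations with bounded cycle lengths (visible in the parameters $s_2,\ldots,s_6$ of Lemma~\ref{lem_wor}) that your sketch does not mention.
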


We will also need to control is the distribution of the number of circuits of $k$ edges (or $k$-circuits, where by a circuit we mean a cycle that traverses every edge at most once). Note that a $1$-circuit is a loop and a $2$-circuit is a multiple edge. We begin by defining the corresponding set of random variables. We let:
\begin{equation*}
X_{N,k}:\Omega_N\rightarrow \mathbb{N}
\end{equation*}
denote the random variable that counts the number of $k$-circuits for all $k\in\mathbb{N}$.

We will use the following theorem by Bollob\'as:
\begin{thm}\label{thm_bol}\cite{Bol1} Let $m\in\mathbb{N}$. Then:
$$
X_{N,i} \rightarrow X_i \text{ in distribution for }N\rightarrow\infty\text{ for all }i=1,\ldots, m
$$
where:
\vspace{-0.1in}
\begin{itemize}[leftmargin=0.2in]
\item[-] $X_i$ is a Poisson distributed random variable with mean $\lambda_i = 2^i/2i$.
\item[-] The random variables $X_1,\ldots,X_m$ are mutually independent.
\end{itemize}
\end{thm}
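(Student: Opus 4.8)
The plan is to establish this by the method of moments: I will show that for all non-negative integers $a_1,\dots,a_m$,
$$
\mathbb{E}\!\left[\prod_{i=1}^m (X_{N,i})_{a_i}\right] \;\longrightarrow\; \prod_{i=1}^m \lambda_i^{a_i} \qquad\text{as }N\to\infty,
$$
where $(x)_a := x(x-1)\cdots(x-a+1)$ is the falling factorial and $\lambda_i = 2^i/(2i)$. The right-hand side is exactly the joint $(a_1,\dots,a_m)$-th factorial moment of a vector of independent Poisson variables with means $\lambda_1,\dots,\lambda_m$, and such a vector is moment-determinate (apply Carleman's condition coordinate by coordinate), so convergence of all these joint factorial moments upgrades to joint convergence in distribution together with the asserted independence.

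To set things up, for $k\ge1$ I call a \emph{potential $k$-circuit} a choice of combinatorial data prescribing which $k$ edges of $\Gamma(\omega)$ would form a $k$-circuit: a cyclic list $v_1,\dots,v_k$ of distinct vertices of $\{1,\dots,N\}$ together with an ordered pair of distinct half-edges at each $v_i$, read up to the $2k$ symmetries of an unoriented cycle (for $k=1$ this is a vertex with a two-element subset of its three half-edges, and for $k=2$ a pair of vertices with two half-edges chosen at each and a matching between the two pairs). Then $X_{N,k}=\sum_\alpha \mathbb{1}_\alpha$, the sum over potential $k$-circuits $\alpha$, where $\mathbb{1}_\alpha$ is the event that all $k$ prescribed pairs of half-edges are paired in $\omega$. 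A routine count gives that the number of potential $k$-circuits is asymptotic to $6^k N^k/(2k)$ (it equals $(N)_k\,6^k/(2k)$ for $k\ge3$, $3N$ for $k=1$, and $9N(N-1)$ for $k=2$), while the probability that $k$ prescribed, pairwise disjoint pairs of half-edges all occur in a uniformly random pairing of $\{1,\dots,3N\}$ is $\prod_{j=0}^{k-1}(3N-2j-1)^{-1}\sim(3N)^{-k}$. Multiplying, $\mathbb{E}[X_{N,k}]\to 6^k/(2k\cdot 3^k)=2^k/(2k)=\lambda_k$, matching the Poisson mean.

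The main work is the higher joint factorial moments. Expanding, $\mathbb{E}\big[\prod_i (X_{N,i})_{a_i}\big]$ is the expected number of ordered tuples consisting of $a_i$ pairwise distinct potential $i$-circuits for each $i$, i.e.\ a sum of probabilities $\mathbb{E}[\mathbb{1}_{\alpha_1}\cdots\mathbb{1}_{\alpha_A}]$ with $A=\sum_i a_i$. I group the terms according to the subgraph $H$ of $\Gamma(\omega)$ that the chosen circuits would span, and analyse $H$ component by component: a connected component on $V$ vertices and $E$ edges containing $c$ of the chosen circuits can be placed in $O(N^{V})$ ways, all of its edges occur with probability $O((3N)^{-E})$, and since this component is a connected union of $c$ distinct cycles one has $V-E = 1 - \mathrm{rk}\,H_1 \le 1-c$ (equality iff $c=1$, the single-circuit case where $V=E=\ell$, including $V=E=1$ for a loop and $V=E=2$ for a double edge). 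Summing the exponents of $N$ over components gives an overall $N$-exponent $\le (\#\text{components}) - A \le 0$, with equality precisely when all $A$ circuits are pairwise disjoint. Hence only the pairwise disjoint families survive the limit, and for these both the number of placements and the edge-occurrence probability factor over the individual circuits up to $(1+o(1))$ errors, producing exactly $\prod_i \lambda_i^{a_i}$.

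The one real obstacle is this third step: keeping the overlap bookkeeping honest, so that one sees exactly which configurations (the vertex-disjoint ones) persist and that their limiting contribution is the advertised product $\prod_i\lambda_i^{a_i}$ rather than something off by a combinatorial factor. The remaining ingredients — the first-moment estimate, the factorization in the disjoint case, and the passage from moment convergence to convergence in distribution — are standard double-factorial asymptotics and a routine moment-determinacy argument.
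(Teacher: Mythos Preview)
The paper does not give its own proof of this statement; it is quoted from Bollob\'as \cite{Bol1} and used as a black box. Your sketch is essentially Bollob\'as's original argument (convergence of joint factorial moments to those of independent Poissons), and the overall strategy, the first-moment computation, and the identification of the limiting constants $\lambda_i=2^i/(2i)$ are all correct.

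There is, however, one slip in your overlap analysis. You assert that a connected union of $c$ pairwise distinct cycles has $\mathrm{rk}\,H_1\ge c$; this is false already for $c=3$. For instance, a theta graph --- two vertices joined by three internally disjoint paths --- is the union of three distinct cycles but has first Betti number $2$ (in the extreme case of three parallel edges between two vertices one gets three distinct $2$-circuits with $b_1=2$). The conclusion you need is nevertheless true, and follows from the weaker and correct observation that each component of $H$ contains at least one cycle, so $b_1(j)\ge 1$: summing $V_j-E_j=1-b_1(j)\le 0$ over components gives total $N$-exponent $\le 0$, and equality forces every component to have $b_1=1$ and hence (being a union of cycles) to \emph{be} a single cycle. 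A single cycle can contain only one of the chosen circuits (they are pairwise distinct), so the number of components equals $A$ and the circuits are pairwise vertex-disjoint. With this correction the rest of the argument --- the factorisation in the disjoint case and the passage from factorial-moment convergence to joint convergence in distribution --- goes through as you describe.
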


In the proof of the theorem above one uses the fact that fixed graphs that are not circuits appear with probability tending to $0$. We will need this fact and state it as a theorem:
\begin{thm}\label{thm_noncyc}\cite{Bol1} Let $H$ be a graph that contains more edges than vertices. Then:
$$
\ExV{N}{\text{number of copies of }H\text{ in }\Gamma} = \mathcal{O}\left(N^{-1}\right)
$$
for $N\rightarrow\infty$.
\end{thm}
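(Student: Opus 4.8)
The plan is a first-moment computation in the configuration model $\Omega_N$. Write $v$ and $e$ for the numbers of vertices and edges of $H$, so that $e>v$ by hypothesis, and count \emph{copies of $H$ in $\Gamma(\omega)$}, meaning subgraphs of $\Gamma(\omega)$ isomorphic to $H$. Each such subgraph arises from exactly $\abs{\mathrm{Aut}(H)}$ pairs consisting of an injection $\phi\colon V(H)\hookrightarrow\{1,\dots,N\}$ together with a choice, at every vertex $i$ of $H$, of an injective assignment of the $\deg_H(i)$ edge-ends at $i$ to half-edges of the vertex $\phi(i)$, such that the resulting $e$ pairs of half-edges all belong to $\omega$. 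If $H$ has a vertex of degree larger than $3$ there are no copies and the statement is trivial, so we may assume $H$ has maximum degree at most $3$.

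First I would fix $\phi$ together with one of the half-edge assignments. Since $H$ is a simple graph, the $e$ prescribed pairs are disjoint and involve $2e$ pairwise distinct half-edges, so the probability that all of them lie in the uniformly random matching $\omega$ equals
$$
\frac{(3N-2e-1)!!}{(3N-1)!!}\;=\;\prod_{j=0}^{e-1}\frac{1}{3N-2j-1}\;\sim\;(3N)^{-e}\quad(N\to\infty),
$$
a bound that is visibly uniform in $\phi$. The number of half-edge assignments attached to a fixed $\phi$ is at most $C_H:=\prod_{i\in V(H)}\tfrac{3!}{(3-\deg_H(i))!}$, a constant depending only on $H$, and the number of injections $\phi$ is at most $N^v$. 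Hence by linearity of expectation
$$
\ExV{N}{\text{number of copies of }H\text{ in }\Gamma}\;\leq\;\frac{N^v\,C_H}{\abs{\mathrm{Aut}(H)}}\cdot\frac{(3N-2e-1)!!}{(3N-1)!!}\;=\;\mathcal{O}\!\left(N^{\,v-e}\right).
$$
Since $e\geq v+1$, the exponent $v-e$ is at most $-1$, which is exactly the claimed $\mathcal{O}(N^{-1})$ bound.

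The only place the hypothesis is actually used is that the $2e$ half-edges occurring in a copy of $H$ are pairwise distinct, so that the elementary identity $\Pro{N}{k\text{ given disjoint pairs}\subseteq\omega}=(3N-2k-1)!!/(3N-1)!!$ applies with $k=e$; this would fail for a loop or a multiple edge, but $H$ is assumed to be a (simple) graph. No connectedness of $H$ is needed, since the estimate depends only on the totals $v$ and $e$; in the same way one sees that graphs with $e\leq v$ (paths, short circuits, etc.) contribute $\mathcal{O}(N)$ or $\mathcal{O}(1)$ instead, consistent with the distributional statement of Theorem \ref{thm_bol}. I do not anticipate any real obstacle: the whole content is the uniform-in-$\phi$ probability estimate of the second step, and everything else is bookkeeping.
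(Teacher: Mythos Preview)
The paper does not give its own proof of this statement: it is quoted as a result of Bollob\'as and used as a black box. Your first-moment computation in the configuration model is correct and is precisely the standard argument behind the cited result, so there is nothing to compare.

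One minor remark on your closing aside: the claim that disjointness of the $e$ prescribed pairs ``would fail for a loop or a multiple edge'' is not quite right. Once you work with edge-ends and require the half-edge assignment at each vertex to be injective (as you do), the $2e$ half-edges are automatically pairwise distinct whenever $\deg_H(i)\leq 3$ for all $i$, even if $H$ has loops or parallel edges. So the hypothesis that $H$ be simple is not actually where it enters; the computation goes through for multigraphs just as well. This does not affect the correctness of your proof.
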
 

Finally, most graphs we will consider will actually be multigraphs, which means that we need to know the cardinality of the set of multigraphs with a fixed number of vertices as well. We have the following theorem, of which we offer a new proof below:
\begin{thm}\cite{Wor2}\label{thm_multigraphs1} Let $I_N$ denote the number of isomorphism classes of cubic multigraphs on $N$ vertices. Then:
$$
I_N \sim \frac{e^{2}}{\sqrt{\pi N}} \left(\frac{3N}{4e}\right)^{N/2}
$$
for $N\rightarrow\infty$.
\end{thm}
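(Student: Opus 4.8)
The plan is to count \emph{labelled} cubic multigraphs through the configuration model $\Omega_N$, using the short-cycle statistics of Theorems \ref{thm_bol} and \ref{thm_noncyc}, and then to pass to isomorphism classes via an automorphism count in the spirit of Theorem \ref{thm_aut}.

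The starting point is an elementary overcounting lemma. A labelled cubic multigraph $G$ on $\{1,\dots,N\}$ with $\ell(G)$ loops and edge multiplicities $\mu_e$ equals $\Gamma(\omega)$ for exactly $6^N\big/\big(2^{\ell(G)}\prod_e\mu_e!\big)$ elements $\omega\in\Omega_N$: refining $G$ to a pairing amounts to choosing, at each vertex, a bijection between its three half-edges and the three edge-ends sitting there ($6^N$ choices in total), and two such choices produce the same element of $\Omega_N$ precisely when they differ by swapping the two ends of a loop or by permuting a bundle of parallel edges, an action of $\prod_{\mathrm{loops}}(\mathbb{Z}/2)\times\prod_e\symm{\mu_e}$ which is free. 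Since $\abs{\Omega_N}=(3N-1)!!$, this reads $\Pro{N}{\Gamma(\omega)=G}=6^N\big/\big((3N-1)!!\,2^{\ell(G)}\prod_e\mu_e!\big)$, and therefore, writing $L_N$ for the number of labelled cubic multigraphs on $N$ vertices,
$$
L_N \;=\; \frac{(3N-1)!!}{6^N}\,\ExV{N}{2^{\ell(\Gamma(\omega))}\textstyle\prod_e \mu_e(\Gamma(\omega))!}.
$$

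The core of the proof is to show that this expectation converges to $e^2$. By Theorem \ref{thm_noncyc} an edge of multiplicity $\geq 3$ (a fixed graph with two vertices and three edges) occurs with probability $\mathcal{O}(N^{-1})$, so outside an event of vanishing probability every multiplicity is at most $2$ and then $2^{\ell(\Gamma)}\prod_e\mu_e!=2^{X_{N,1}+X_{N,2}}$, where $X_{N,1}$ counts loops and $X_{N,2}$ counts double edges. By Theorem \ref{thm_bol} the pair $(X_{N,1},X_{N,2})$ converges in distribution to two independent Poisson variables of means $\lambda_1=\lambda_2=1$, and since $\mathbb{E}[2^{\mathrm{Poisson}(1)}]=e$, the answer \emph{should} be $e\cdot e=e^2$. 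The main obstacle is that distributional convergence does not by itself control $\ExV{N}{2^{X_{N,1}+X_{N,2}}}$: one needs uniform integrability. I would supply this by bounding the joint factorial moments directly in $\Omega_N$ — a count of ordered tuples of pairwise disjoint loops and double edges gives $\ExV{N}{\binom{X_{N,1}}{r}\binom{X_{N,2}}{s}}\leq C^{r+s}/(r!\,s!)$ uniformly in $N$ — so that in the expansion $\ExV{N}{2^{X_{N,1}+X_{N,2}}}=\sum_{r,s\geq 0}\ExV{N}{\binom{X_{N,1}}{r}\binom{X_{N,2}}{s}}$ dominated convergence applies term by term, the limit being $\sum_{r,s}\lambda_1^r\lambda_2^s/(r!\,s!)=e^2$. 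Hence $L_N\sim e^2\,(3N-1)!!\,/\,6^N$.

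Finally I would pass from $L_N$ to $I_N$. Feeding the indicator $\mathbf{1}[\mathrm{Aut}(\Gamma(\omega))\neq\{e\}]$ through the same overcounting identity, the number of labelled cubic multigraphs carrying a non-trivial automorphism equals $(3N-1)!!/6^N$ times $\ExV{N}{2^{\ell}\prod_e\mu_e!\cdot\mathbf{1}[\mathrm{Aut}(\Gamma(\omega))\neq\{e\}]}$; splitting off the event that $X_{N,1}+X_{N,2}$ is large (negligible, uniformly, by the factorial-moment bounds) and applying Theorem \ref{thm_aut} on the complement shows this is $o\big((3N-1)!!/6^N\big)=o(L_N)$, so almost every labelled cubic multigraph is asymmetric and $I_N\sim L_N/N!$. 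The same computation in the simple case gives $L_N^*\sim e^{-2}(3N-1)!!/6^N$ (using $\Pro{N}{\Gamma(\omega)\ \mathrm{simple}}=\Pro{N}{X_{N,1}=X_{N,2}=0}\to e^{-2}$) together with $I_N^*\sim L_N^*/N!$; dividing, $I_N\sim e^4 I_N^*$, and inserting Theorem \ref{thm_BC} yields $I_N\sim e^4\cdot\frac{1}{e^2\sqrt{\pi N}}\big(\frac{3N}{4e}\big)^{N/2}=\frac{e^2}{\sqrt{\pi N}}\big(\frac{3N}{4e}\big)^{N/2}$. (Alternatively one can bypass Theorem \ref{thm_BC} and apply Stirling's formula directly to $(3N-1)!!/(6^N N!)$.) The two genuinely substantive steps are the uniform-integrability estimate for $\ExV{N}{2^{X_{N,1}+X_{N,2}}}$ and the adaptation of the automorphism bound to this non-uniformly weighted multigraph model; everything else is bookkeeping.
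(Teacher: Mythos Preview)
Your proposal is correct and follows essentially the same route as the paper: count labelled cubic multigraphs as $\abs{\Omega_N}/6^N$ times $\ExV{N}{2^{X_{N,1}+X_{N,2}}}$, evaluate that expectation via factorial moments and dominated convergence to get $e^2$, then show that non-trivial automorphisms are negligible in this weighted model and divide by $N!$. Your overcounting formula $6^N/(2^{\ell}\prod_e\mu_e!)$ is in fact slightly more careful than the paper's $6^N/(2^k2^l)$ (which tacitly assumes no edge has multiplicity $\geq 3$), and your final detour through $I_N\sim e^4 I_N^*$ and Theorem~\ref{thm_BC} is just an optional repackaging of the Stirling computation the paper performs directly.
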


\begin{proof} To prove this, we define the following two sets:
\begin{equation*}
\mathcal{G}_{N}=\left\{\text{Cubic multigraphs with vertex set }\{1,\ldots,N\} \right\}
\end{equation*}
\begin{equation*}
\mathcal{U}_{N}=\left\{\text{Isomorphism classes of cubic multigraphs on }N\text{ vertices}\right\}
\end{equation*}
We have $I_N=\aant{\mathcal{U}_N}$. We will use the two natural forgetful maps:
\begin{equation*}
\Omega_{N}\xrightarrow{\pi_1}\mathcal{G}_{N}\xrightarrow{\pi_2}\mathcal{U}_{N}
\end{equation*}
If $\Gamma\in\mathcal{U}_{N}$ and $G\in\mathcal{G}_{N}$ have $k$ $1$-circuits and $l$ $2$-circuits, then we have:
\begin{equation*}
\aant{\pi_1^{-1}(G)} = \frac{6^{N}}{2^k2^l} \text{ and } \aant{\pi_2^{-1}(\Gamma)} = \frac{N!}{\aant{\mathrm{Aut}(\Gamma)}}
\end{equation*}

We know $\aant{\Omega_N}$. We will first count $\aant{\mathcal{G}_N}$ using $\pi_1$ and after that we will use $\pi_2$ to count $I_N$.

Because $X_1(\omega)\leq N$ and $X_2(\omega)\leq N$ for all $\omega\in\Omega_N$ we have:
\begin{align*}
\aant{\mathcal{G}_N} & = \frac{\aant{\Omega_N}}{6^N}\sum_{k,l=0}^N 2^k2^l\Pro{N}{X_1=k,\;X_2=l} \\
 & = \frac{\aant{\Omega_N}}{6^N}\ExV{N}{2^{X_1+X_2}} \\
 & = \frac{\aant{\Omega_N}}{6^N}\ExV{N}{\sum_{k=0}^{X_1+X_2}\binom{X_1+X_2}{k}} \\
\end{align*}
Because $X_1+X_2\leq 2N$ we have that:
\begin{equation*}
\binom{X_1+X_2}{k} = 0 \text{ for all }k> 2N
\end{equation*}
So we obtain:
\begin{align*}
\aant{\mathcal{G}_N} & = \frac{\aant{\Omega_N}}{6^N}\ExV{N}{\sum_{k=0}^{2N}\binom{X_1+X_2}{k}} \\
 & = \frac{\aant{\Omega_N}}{6^N}\sum_{k=0}^{2N}\frac{1}{k!}\ExV{N}{\left(X_1+X_2\right)_k} \\
\end{align*}
where $\left(X_1+X_2\right)_k=(X_1+X_2)(X_1+X_2-1)\cdots (X_1+X_2-k+1)$. This number of equal to the number of ordered $k$-tuples of $1$- and $2$-circuits. We define the set:
\begin{equation*}
\mathcal{C}_k = \left\{k \text{-tuples of }1\text{- and }2\text{-circuits, with half edges labelled with labels in } \{1,\ldots,3N\} \right\}
\end{equation*}
and:
\begin{equation*}
\mathcal{C}_{k,i} = \verz{c\in\mathcal{C}_k}{c\text{ contains }i\text{ }1\text{-circuits}}
\end{equation*}
So we get:
\begin{equation*}
\aant{\mathcal{G}_N} = \frac{\aant{\Omega_N}}{6^N}\sum_{k=0}^{2N}\frac{1}{k!}\cdot \frac{1}{\aant{\Omega_N}}\sum_{\omega\in\Omega_N}\sum_{i=0}^k\sum_{c\in \mathcal{C}_{k,i}} \chi_{c}(\omega)
\end{equation*}
where:
\begin{equation*}
\chi_c(\omega)= \left\{ \begin{array}{ll} 1 & \text{if } c\subset \omega \text{ as partitions}\\
 0 & \text{otherwise} \end{array}\right.
\end{equation*}
So:
\begin{equation*}
\aant{\mathcal{G}_N} = \frac{\aant{\Omega_N}}{6^N}\sum_{k=0}^{2N}\frac{1}{k!} \sum_{i=0}^k\sum_{c\in \mathcal{C}_{k,i}} \Pro{N}{c\subset\omega}
\end{equation*}
Because $\Pro{N}{c\subset\omega}$ depends only on the number of edges in $c$, it is constant on $\mathcal{C}_{k,i}$. So we fix $c_{k,i}\in \mathcal{C}_{k,i}$ and write:
\begin{align*}
\aant{\mathcal{G}_N} & = \frac{\aant{\Omega_N}}{6^N}\sum_{k=0}^{2N}\frac{1}{k!} \sum_{i=0}^k \aant{\mathcal{C}_{k,i}}\cdot \Pro{N}{c_{k,i}\subset\omega} \\
& = \frac{\aant{\Omega_N}}{6^N}\sum_{k=0}^{2N}\frac{1}{k!} \sum_{i=0}^k \frac{k! \cdot 2^{k-i} \cdot 3^{2k-i} }{i!(k-i)!\cdot 2^{k-i}}\frac{N(N-1)\cdots (N-(i+2(k-i))+1)}{(3N-1)(3N-3)\cdots (3N-2(i+2(k-i))+1)} 
\end{align*}
We have:
\begin{equation*}
\frac{N-m}{3N-2m-1} \leq \frac{N}{3N-1} \text{ for all }m=0,\ldots 2k-i
\end{equation*}
So, using dominated convergence, we obtain:
\begin{align*}
\aant{\mathcal{G}_N} &\sim \frac{\aant{\Omega_N}}{6^N}\sum_{k=0}^{2N}\frac{1}{k!} \sum_{i=0}^k \binom{k}{i} \cdot 3^{2k-i} \cdot \frac{1}{3^{2k-i}} \\
&= \frac{\aant{\Omega_N}}{6^N}\sum_{k=0}^{2N}\frac{1}{k!} 2^k \\
&\sim e^{2 }\frac{\aant{\Omega_N}}{6^N}
\end{align*}
for $N\rightarrow\infty $.

We will now use this, combined with $\pi_2$ to compute $I_N$. In particular, we will show that we can ignore graphs with automorphisms. We have:
\begin{align*}
\frac{\aant{\verz{G\in \mathcal{G}_N}{\mathrm{Aut}(G)\neq \{e\}}}}{\aant{\mathcal{G}_N}} & = \frac{\sum\limits_{k,l}2^{k+l}\Pro{N}{X_1=k,X_2=l,\mathrm{Aut}\neq \{e\}}}{6^N}\frac{\aant{\Omega_N}}{\aant{\mathcal{G_N}}}
\end{align*}
Theorem \ref{thm_bol} tells us that there exists a $C>0$ such that for all $k,l\in\mathbb{N}$:
\begin{equation*}
2^{k+l}\Pro{N}{X_1=k,X_2=l,\mathrm{Aut}\neq \{e\}}\; \leq\; 2^{k+l}\Pro{N}{X_1=k,X_2=l}\; \leq\; C\cdot 2^{k+l}\frac{\lambda_1^{k} e^{-\lambda_1}\lambda_2^{l} e^{-\lambda_2}}{k!\; l!}
\end{equation*}
which is summable and does not depend on $N$. Hence by the dominated convergence theorem we can take the limits of the terms to compute the limit of the sum. We have:
\begin{equation*}
\lim\limits_{N\rightarrow\infty }\Pro{N}{X_1=k,X_2=l,\mathrm{Aut}\neq \{e\}} \; \leq \;\lim\limits_{N\rightarrow\infty }\Pro{N}{\mathrm{Aut}\neq \{e\}} \;= \; 0
\end{equation*}
by Theorem \ref{thm_aut}. So we obtain that: 
\begin{equation*}
\lim\limits_{N\rightarrow\infty}\frac{\aant{\verz{G\in \mathcal{G}_N}{\mathrm{Aut}(G)\neq \{e\}}}}{\aant{\mathcal{G}_N}} = 0
\end{equation*}
and:
\begin{equation*}
\frac{\aant{\mathcal{G}_N}}{\aant{\mathcal{U}_N}} \sim N!
\end{equation*}
for $N\rightarrow\infty$. Combining the above and applying Stirling's approximation now gives the theorem.
\end{proof}

\subsection{Maps with a small defect}

We will also need to control the number of graphs that carry a map which distorts the adjacency structure at at most a fixed number of edges. This is a slight generalization of an automorphism of a graph. In Theorem \ref{thm_aut} we have already seen that the probability that a random cubic graph carries a non-trivial automorphism tends to $0$ for $n\rightarrow\infty$ (cf. also \cite{KSV}, \cite{Wor}). In particular, in \cite{KSV}, Kim, Sudakov and Vu also consider maps of small distortion, but for regular graphs with growing vertex degrees.

We note that the set of bijections of the vertex set $V$ of a graph can be identified with the symmetric group $\symm{V}$. The distortion we were speaking about is the edge defect defined below.
\begin{dff} Let $\Gamma=(V,E)$ be a graph and $\pi\in \symm{V}$. The edge defect of $\pi$ on $\Gamma$ is the number:
$$
\mathrm{ED}_\pi(\Gamma) = \aant{\verz{e\in E}{\pi(e)\notin E}}
$$
\end{dff}
Note the difference with the similar definition of the defect of a permutation by Kim, Sudakov and Vu in \cite{KSV}.

We will also need to consider the action of an element $\pi\in\symm{n}$ on the edges $K_n$, the complete graph on $n$ vertices. From hereon an edge orbit of an element $\pi\in\symm{n}$ will mean the orbit of an edge in $K_n$ under $\pi$.

We want to bound the probability that a cubic graph carries a non-trivial map with edge defect $\leq k$ for a fixed $k\in\mathbb{N}$. To do this, we will adapt the proof of Wormald in \cite{Wor} of the fact that a random regular graph asymptotically almost surely carries no automorphisms to our situation. The key ingredient is the following lemma (Equations 2.3 and 2.7 in \cite{Wor}):
\begin{lem}\label{lem_wor} \cite{Wor} There exists a constant $C>0$ such that: given $N\in\mathbb{N}$, $a\in\mathbb{N}$, $s_i\in\mathbb{N}$ for $i=2,\ldots,6$ such that $2s_2+3s_3+\ldots 6s_6\leq N$ and $e_1\in\mathbb{N}$ such that $e_1\leq s_2$ and $f\in\mathbb{N}$. Let $\mathcal{H}_N(a,s_1,\ldots, s_6,e_1,f,k)$ be the set of pairs $(\pi,H)$ such that:
\vspace{-0.1in}
\begin{itemize}[leftmargin=0.2in]
\item[-] $\pi\in \symm{N}$, $H$ a graph on $\{1,\ldots,N\}$.
\item[-] $\pi\neq \mathrm{id}$ and $\pi$ has $s_i$ $i$-cycles for $i=2,\ldots 6$.
\item[-] The support of $\pi$ is $A$, $\aant{A}=a$ and $\pi$ fixes $H$ as a graph.
\item[-] $\deg_H(x) = 3$ for all $x\in A$.
\item[-] At least one end of every edge in $H$ is moved by $\pi$.
\item[-] The subgraph of $H$ induced by $A$ can be written as the union of $f$ edge-orbits of $\pi$ and $f$ is minimal in this respect.
\item[-] $e_1$ edges of $H$ are fixed edge-wise by $\pi$.
\item[-] The subgraph of $H$ induced by $A$ has $k$ edges.
\end{itemize}
Then:
$$
\sum_{(\pi,H)\in \mathcal{H}_N(a,s_1,\ldots, s_6,e_1,f)}\Pro{N}{H\subset\Gamma} \leq C^a N^{-a/2}a^{3a/8}
$$
\end{lem}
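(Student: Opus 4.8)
The lemma is, as the attribution indicates, a repackaging of the estimates behind Wormald's theorem that a random cubic graph almost surely has trivial automorphism group, so the plan is to follow and lightly adapt the argument of \cite{Wor}. I would work entirely in the configuration model. For a graph $H$ on $\{1,\dots,N\}$ with $m$ edges and all degrees at most $3$ one has the exact formula
\[
\Pro{N}{H\subset\Gamma}=\left(\prod_{v}(3)_{\deg_H(v)}\right)\cdot\prod_{j=0}^{m-1}\frac{1}{3N-2j-1},\qquad (3)_d:=\frac{3!}{(3-d)!},
\]
obtained by counting the ways to realize $H$ inside a random pairing. The first structural point is that, since every vertex of $A$ has degree exactly $3$ in $H$ and every edge of $H$ has an endpoint in $A$, the $3a$ half-edges attached to $A$ split as $2k$ half-edges used by the $k$ edges inside $A$ and $3a-2k$ half-edges used by $A$-to-$A^{c}$ edges; in particular $m=3a-k$. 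Bounding $(3)_d\le 6$ and (say on the range $a\le N/4$, the remaining range being absorbed into the constant) using $3N-2m\ge 3N/2$, one gets a clean bound $\Pro{N}{H\subset\Gamma}\le C_0^{\,a}\,N^{-(3a-k)}$ with $C_0$ absolute.

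Next I would enumerate the admissible pairs $(\pi,H)$ in the order: support $A$, permutation $\pi$ on $A$, then $H$. Choosing $A$ costs $\binom{N}{a}\le N^{a}/a!$, and the factor $1/a!\le (e/a)^a$ here is essential. Given $\pi$, the graph $H$ splits $\pi$-equivariantly into $H[A]$ and the family of $A$-to-$A^{c}$ edges, and the latter comes in $\pi$-orbits of a very constrained shape: if $u\in A$ lies in an $\ell$-cycle of $\pi$ and $\{u,w\}\in E(H)$ with $w\in A^{c}$, then $\{\pi^{j}u,w\}$ is an edge for all $j$, so $w$ has at least $\ell$ neighbours in $A$; cubicity of $\Gamma$ forces $\ell\le 3$, each such orbit uses at least two half-edges of its endpoint $w$, and distinct orbits have distinct endpoints in $A^{c}$. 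Hence, once $H[A]$ is fixed (which also fixes which half-edges of $A$ are free), the $A$-to-$A^{c}$ part is just an injective assignment of the at most $(3a-2k)/2$ free half-edge orbits to vertices of $A^{c}$, contributing at most $N^{(3a-2k)/2}$. The powers of $N$ now cancel exactly: $N^{a}\cdot N^{(3a-2k)/2}\cdot N^{-(3a-k)}=N^{-a/2}$, and together with the $1/a!$ this already accounts for the factor $N^{-a/2}$ in the claimed bound (and leaves a factor $1/a!\le (e/a)^{a}$ to be spent against the $a^{a}$-type terms below).

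What remains — and this is the heart of the matter, and the step I expect to be the real obstacle — is to bound the number of $\pi$-invariant candidates for $H[A]$, namely subgraphs of the complete graph on $A$ with $k$ edges that decompose into exactly $f$ edge-orbits of $\pi$ (with $e_1$ of them edge-wise fixed, i.e.\ contained in $2$-cycles), summed against the number of permutations $\pi$ of the prescribed short-cycle type, by a quantity of the form $a!\cdot C^{a}a^{3a/8}$. This is exactly where the parameters $s_2,\dots,s_6$, $e_1$ and the minimality of $f$ enter, and where I would import Wormald's analysis from \cite{Wor}: one groups the edge-orbits by size, handles the vertices in short cycles of $\pi$ separately from those in long cycles (a long cycle forces every incident edge-orbit to be large, so there are few of them and they are cheap to specify by a representative), and trades the entropy of choosing orbit representatives against the number of permutations with a given short-cycle profile, using Stirling's formula to convert between binomial coefficients, factorials and powers. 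It is the precise bookkeeping in this last step — making the orbit count come out as $a^{3a/8}$ rather than a larger power of $a$ — that constitutes the real work; the configuration-model estimate, the internal/external splitting, and the cancellation of the powers of $N$ sketched above are the routine part.
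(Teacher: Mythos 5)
The paper does not actually prove this lemma; it is a direct citation of Equations~2.3 and~2.7 of Wormald's paper \cite{Wor}, so there is no in-paper proof to compare against. Taking your proposal on its own terms: the framework you lay out is sound and is indeed the skeleton of Wormald's argument. The configuration-model probability estimate, the identity $m=3a-k$ coming from $3a=2k+(\text{boundary edges})$, the observation that each $A$-to-$A^{c}$ orbit has a unique endpoint in $A^{c}$ of degree at least two so that there are at most $(3a-2k)/2$ of them, and the resulting exact cancellation
$N^{a}\cdot N^{(3a-2k)/2}\cdot N^{-(3a-k)}=N^{-a/2}$
are all correct and match how one would organize the estimate.

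The genuine gap is the one you name yourself. The bound $C^{a}N^{-a/2}a^{3a/8}$ has two pieces: the $N^{-a/2}$, which your routine bookkeeping produces, and the $a^{3a/8}$, which is the entire content of the lemma and which you explicitly decline to derive, writing instead that you would ``import Wormald's analysis.'' That is not a small finishing step. Without the constraints imposed by $\pi$-invariance, degree $\leq 3$, the cycle-type data $s_2,\dots,s_6$, the parameter $e_1$, and the minimality of $f$, the naive count of candidate graphs $H[A]$ with $k\leq 3a/2$ edges is of order $a^{\Theta(a)}$ with a much larger exponent than $3a/8$, and the factor $1/a!\sim(e/a)^{a}$ coming from $\binom{N}{a}$ does not by itself tame it. Trading the count of orbit-decomposed $H[A]$ against the count of permutations with a prescribed short-cycle profile, and seeing the exponent $3/8$ emerge, is exactly the nontrivial combinatorics in \cite{Wor}; until that is carried out, the proposal is an accurate roadmap rather than a proof. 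Since the paper itself simply cites Wormald here, your treatment is consistent with the paper's level of detail, but it does not independently establish the claimed inequality.

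Two minor points worth flagging. First, ``the remaining range $a>N/4$ being absorbed into the constant'' is asserted but not obviously true as stated: for $a$ close to $N$ one needs a separate argument (or a crude bound on the number of admissible $(\pi,H)$) to check that the right-hand side is not overwhelmed. Second, your phrase ``free half-edge orbits of $A$'' implicitly lifts $\pi$ from a vertex permutation to a half-edge permutation, which requires a choice; this is harmless in a careful configuration-model setup but should be made explicit if you were to write out the argument in full.
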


Using this, we can prove the following, where we write $a(\pi)$ for the number of elements in the support $A(\pi)$ of $\pi\in\symm{2N}$:
\begin{prp}\label{prp_symm1} Let $n,k\in\mathbb{N}$. There exists a $C>0$ such that:
$$
\Pro{N}{\substack{\displaystyle{\exists\; \mathrm{id}\neq \pi_1,\ldots,\pi_n\in \symm{2N}\text{ such that } \mathrm{ED}_{\pi_i}(\Gamma)\leq k \text{ and }a(\pi_i) \geq k-1} \\  \displaystyle{\forall i=1,\ldots,n \text{ and }\Gamma\text{ has }< Cn\text{ circuits of length }\leq k}}}\rightarrow 0
$$
for $N\rightarrow\infty$.
\end{prp}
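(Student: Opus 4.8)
The strategy is to separate the permutations under consideration by the size of their support and handle the two regimes by completely different means. Fix a threshold $a_0=a_0(k)$, to be chosen at the end. Call a permutation $\pi\in\symm{2N}$ with $\mathrm{ED}_\pi(\Gamma)\le k$ \emph{large} if $a(\pi)\ge a_0$ and \emph{small} if $k-1\le a(\pi)<a_0$. The large case is the probabilistic heart: I would mimic Wormald's proof of Theorem~\ref{thm_aut} through Lemma~\ref{lem_wor}. To $\pi$ one associates its support $A=A(\pi)$ and the subgraph $H=H(\pi)\subseteq\Gamma$ of all edges incident to $A$, so that $\deg_H(x)=3$ on $A$ and every edge of $H$ has a moved endpoint. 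Although $\pi$ need not fix $H$, it maps all but at most $2k$ of its edges back into $H$ — all but the $\le k$ edges $e$ with $\pi(e)\notin E(\Gamma)$ and the $\le k$ edges missing from $\pi(E(\Gamma))$. Deleting these exceptional edges \emph{together with the at most $2k$ $\pi$-edge-orbits meeting them} leaves a subgraph $H^\circ$ which is an honest union of $\pi$-edge-orbits, hence is fixed by $\pi$. One then sums the bound of Lemma~\ref{lem_wor} applied to $H^\circ$ over the admissible cycle data $(s_2,\dots,s_6)$, $e_1$, $f$, and number of internal edges; reinstating the $\le 2k$ deleted edges (choosing their endpoints and their positions in the orbit diagram) costs at most a factor $N^{O(k)}$ with $k$ fixed, and the superpolynomial factor $C^aN^{-a/2}a^{3a/8}$ dominates this as soon as $a\ge a_0(k)$. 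Summing the resulting fast-decaying series over $a_0\le a\le 2N$ gives $\ExV{N}{\#\{\text{large }\pi\}}\to 0$, so a.a.s.\ $\Gamma$ carries no large near-automorphism.

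For a small $\pi$, the graph $H(\pi)$ has at most $3a_0$ edges and all $\pi$-edge-orbits have bounded length, so the union $H^\circ(\pi)$ of the \emph{pure} orbits (those avoiding non-edges of $\Gamma$) is a $\pi$-invariant subgraph of bounded size. Since $\pi$ has at most $k$ impure orbits while $a(\pi)\ge k-1$, the subgraph $H^\circ(\pi)$ is non-empty and $\pi$ acts non-trivially on it; this forces $H^\circ(\pi)$ to contain a circuit of bounded length (and, after calibrating $a_0$, of length $\le k$). Charge $\pi$ to such a circuit $c$ — say the lexicographically smallest one; then $\pi$ is supported in a bounded neighbourhood of $c$. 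Conversely, a fixed short circuit of $\Gamma$ can be the charge of at most $C'=C'(k)$ small near-automorphisms, because there are only boundedly many bounded neighbourhoods of $c$ in the cubic graph $\Gamma$ and only boundedly many permutations on a bounded vertex set. Hence, deterministically, $\#\{\text{small }\pi\text{ with }a(\pi)\ge k-1\}\le C'\cdot\#\{\text{short circuits of }\Gamma\}$.

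Now set $C:=1/(2C')$. On the event in the statement we are given $n$ distinct non-trivial permutations $\pi_i$ with $\mathrm{ED}_{\pi_i}(\Gamma)\le k$ and $a(\pi_i)\ge k-1$. Outside the probability-$o(1)$ event produced by the first-moment estimate, none of the $\pi_i$ is large, so all of them are small; by the charging they then involve at least $n/C'=2Cn$ short circuits of $\Gamma$, contradicting the hypothesis that $\Gamma$ has fewer than $Cn$ of them. Therefore the whole event is contained, up to probability $o(1)$, in the event ``$\Gamma$ carries a large near-automorphism'', and its probability tends to $0$.

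The main obstacle is the large-support step: Lemma~\ref{lem_wor} is stated for permutations that fix $H$ \emph{exactly}, and genuinely adapting it is the crux. The delicate points are (i) making the ``delete the exceptional edges and their orbits'' reduction rigorous while retaining the degree and moved-endpoint hypotheses on the support of $H^\circ$, which may be a proper subset of $A$; (ii) bounding the overhead of reinstating the exceptional edges by a polynomial in $N$ of degree $O(k)$ \emph{uniformly} in the cycle data, so that it is dominated once $a\ge a_0(k)$; and (iii) absorbing the usual multigraph complications (loops and double edges, both among the exceptional edges and inside $H$) into the constants, exactly as in the proof of Theorem~\ref{thm_multigraphs1}. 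The remaining bookkeeping — matching the precise circuit-length bound $k$ in the small-support step and verifying the ``boundedly many permutations per short circuit'' claim — is routine but must be done with the hypothesis $a(\pi_i)\ge k-1$ in hand.
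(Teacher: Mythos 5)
Your overall strategy matches the paper's: split by support size, bound the large-support contribution by a first moment built on Lemma~\ref{lem_wor}, charge each small-support near-automorphism to a short circuit, and obtain the constant $C$ from the bounded number of such permutations per circuit (the paper phrases this as ``every circuit adds at most a finite number of maps with a bounded edge defect''). The divergences are in how each sub-step is implemented, and both of yours run into trouble that the paper's versions avoid.

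For the large-support step the paper does \emph{not} delete orbits. It keeps the genuinely $\pi$-invariant graph $H\in\mathcal{H}_N(\pi)$, which satisfies every hypothesis of Lemma~\ref{lem_wor} verbatim (in particular $\deg_H(x)=3$ on all of $A$), and passes to the observed subgraph $H'\subset\Gamma$ with $H\setminus H'$ having $k$ edges only on the probability side, via $\Pro{N}{H'\subset\Gamma}\le C'N^k\,\Pro{N}{H\subset\Gamma}$ together with the count $\aant{\mathcal{H}'_N(\pi)}\le\tfrac12\left(3a/2\right)^k\aant{\mathcal{H}_N(\pi)}$. This sidesteps your difficulty (i) entirely: once whole orbits are deleted, some vertices of $A$ drop below degree $3$ in $H^\circ$ and the moved-endpoint hypothesis also fails, so Lemma~\ref{lem_wor} simply does not apply to $H^\circ$ as stated, and it is unclear that ``reinstating'' can recover the lost hypotheses. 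With the paper's version the threshold $a>2k$ and the bound $\sum_{a>2k}(C''')^aN^{-a/2+k}\to 0$ close this half.

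For the small-support step your pure-orbit reduction has a genuine hole. With $a(\pi)\ge k-1$ and at most $k$ impure orbits, $H^\circ(\pi)$ can still be empty, since a single impure orbit may absorb many edges incident to $A(\pi)$; and even when non-empty, a $\pi$-invariant subgraph need not contain any circuit (a $\pi$-invariant forest is possible), let alone one of length $\le k$. ``Calibrating $a_0$'' upward only enlarges $H^\circ$ and can lengthen, not shorten, the shortest circuit, so it does not rescue the length bound. The paper instead works with the non-$\pi$-invariant $H'$, invokes Theorem~\ref{thm_noncyc} to discard fixed subgraphs with more edges than vertices, reduces to the case that $H'$ is a subtree, and uses a degree count (such a tree has at least $k+1$ boundary half-edges) to force either edge defect $>k$ or a short circuit. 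Your small-support argument would need to be replaced by something of this kind.
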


\begin{proof} The idea of the proof is as follows: we consider a set of graphs $\mathcal{H}'_N$ such that any graph with no circuits of less than $k$ edges that carries a map $\pi\in\symm{N}$ with edge defect $\leq k$ contains at least one graph in the set $\mathcal{H}'_N$ as a subgraph. If we can then prove that
\begin{equation*}
\sum\limits_{H'\in\mathcal{H}'_N} \Pro{N}{H'\subset\Gamma} \rightarrow 0
\end{equation*} 
for $N\rightarrow\infty$, then we have proved the proposition. Indeed every circuit adds at most a finite number of maps with a bounded edge defect; this is where the constant $C$ comes from.

We will construct a part of $\mathcal{H}'_N$ out of the set $\mathcal{H}_N$ which is the set of graphs of which at least one must be a subgraph of a graph with a non-trivial automorphism. This last set is given by:
\begin{equation*}
\mathcal{H}_N=\bigcup\limits_{\substack{\pi\in\symm{n} \\ \pi\neq\mathrm{id}}} \mathcal{H}_N(\pi)
\end{equation*}
where:
\begin{equation*}
\mathcal{H}_N(\pi)  = \verz{H\text{ graph on } \{1,\ldots,N\}}{\substack{\displaystyle{\text{every edge in }H\text{ has at least on end in }A(\pi) }\\ \displaystyle{\deg_H(x)=3 \; \forall x\in A} \\ \displaystyle{\pi H = H}}}
\end{equation*}
The condition $\pi H=H$ is equivalent to the fact that $H$ can be written as a union of edge-orbits of $\pi$.

We are interested in graphs that carry a non-trivial `almost-automorphism'. Suppose we have a graph $\Gamma$ for which the map $\pi\in \symm{n}$ with support $A(\pi)\subset \{1,\ldots,N\}$ is such an almost-automorphism. We consider the graph $H\subset\Gamma$ that consists of all edges that have at least one end in $A(\pi)$. All but $k$ images of the edges of $H$ (seen as a subgraph of $K_N$) under $\pi$ should be edges in $\Gamma$ again. This means that $H$ can be written as a union of edge orbits of $\pi$ out of which $k$ edges have been removed and replaced by different edges. We are not interested in these replacement edges and consider the graph $H'$ that consists of $H$ minus these edges. We define:
\begin{equation*}
\mathcal{H}'_N(\pi) = \verz{H'\text{ graph on }\{1,\ldots, N\}}{\;\substack{\displaystyle{\exists H\in\mathcal{H}_N(\pi)\text{ such that }H\backslash H'\text{ has }k\text{ edges }} \\ \displaystyle{H' \text{ contains no circuits of length }\leq k}}}
\end{equation*}
What we have shown is that for $\pi\in\symm{N}$:
\begin{equation*}
\Gamma \text{ has no circuits of length }\leq k\text{ and }\mathrm{ED}_\pi(\Gamma) \leq k \;\Rightarrow \;\exists H'\in\mathcal{H}'_N(\pi) \text{ with }H'\subset\Gamma
\end{equation*}

We now want to apply Lemma \ref{lem_wor}. This means that we need relate the cardinalities of $\mathcal{H}_N(\pi)$ and $\mathcal{H}'_N(\pi)$ and the probabilities $\Pro{N}{H\subset\Gamma}$ and $\Pro{N}{H'\subset\Gamma}$, where $H'$ is the graph obtained from $H\in\mathcal{H}_N(\pi)$ by removing two edges.

Because a graph in $\mathcal{H}_N(\pi)$ has at most $3a/2$ edges we get:
\begin{equation*}
\aant{\mathcal{H}'(\pi)} \leq \frac{1}{2}\left(\frac{3a}{2}\right)^k \cdot \aant{\mathcal{H}_N(\pi)}
\end{equation*}
and because $H'$ contains $k$ edges fewer than $H$, we have:
\begin{equation*}
\Pro{N}{H'\subset\Gamma} \leq C'N^k \cdot \Pro{N}{H\subset\Gamma}
\end{equation*}
where $C'>0$ is independent of $H$,$H'$ and $N$.

It turns out that the bounds above in combination with Lemma \ref{lem_wor} are only small enough when the support of $\pi$ is large enough, i.e. when it contains at least $2k+1$ elements. This means that we need to cut the sum over subgraphs into two pieces. Recall that $a(\pi)$ denotes the number of elements in the support $A(\pi)$ of $\pi\in\symm{N}$ and define:
\begin{equation*}
T_1(N) = \sum\limits_{\substack{\pi\in\symm{N} \\ k-1\leq a(\pi)\leq 2k, \pi\neq \mathrm{id}}} \sum\limits_{H'\in\mathcal{H}'_N(\pi)} \Pro{N}{H'\subset\Gamma}
\end{equation*}
\begin{equation*}
T_2(N) = \sum\limits_{\substack{\pi\in\symm{n} \\ a(\pi)> 2k}} \sum\limits_{H'\in\mathcal{H}'_N(\pi)} \Pro{N}{H'\subset\Gamma}
\end{equation*}
So now we need to prove that both $T_1(N)$ and $T_2(N)$ tend to $0$ for $N\rightarrow\infty$.

We start with $T_2(N)$. We have:
\begin{align*}
T_2(N) & \leq  \sum\limits_{\substack{\pi\in\symm{N} \\ a(\pi)> 2k}}  C''\cdot a(\pi)^2 N^k \sum\limits_{H\in\mathcal{H}_N(\pi)}\Pro{N}{H\subset\Gamma} \\
 & \leq  \sum\limits_{\substack{a,s_1,\ldots,s_6 \\ e_1,f}}  C''\cdot a^2 N^k \sum_{(\pi,H)\in \mathcal{H}_N(a,s_1,\ldots, s_6,e_1,f)}\Pro{N}{H\subset\Gamma} \\
 & \leq  \sum\limits_{a=2k+1}^N  (C''')^a N^{-a/2+k}
\end{align*}
for constants $C'',C'''>0$ independent of $a$ and $N$ (we used Lemma \ref{lem_wor} and the fact that the number of choices for the variables other than $a$ is polynomial in $a$ in the last step). The final expression tends to $0$ for $N\rightarrow\infty$.

To prove that $T_1(N)$ also tends to $0$ we will need to use the assumptions on short circuits and the support of the supposed almost-automorphisms. Recall that we are summing over all permutations $\pi\in\symm{N}$ with $k-1\leq a(\pi)\leq 2k$. We first note that the set of isomorphism classes of graphs we are summing over is finite (they are graphs of bounded degree on at most $2k$ vertices). Because none of these are circuits by assumption, Theorem \ref{thm_noncyc} tells us that if they are not subtrees, they have asymptotic probability $0$ of appearing in the graph. 

As such the only subgraphs we need to worry about are small subtrees. Because we assume that the support of $\pi$ contains at least $k-1$ vertices, this means that we need to consider subtrees of at least $k-1$ vertices. A counting argument shows that such a tree needs to be connected to the rest of the graph by at least $k+1$ edges. This leaves two options. Either $\pi$ could have edge defect $k+1$, in which case we are done, or at least two of these edges connect to the same vertex, in which case our graph needs to contain a subgraph that is either a short circuit or a more complicated graph and then we can apply the same reasoning as above.
\end{proof}

\section{Random oriented graphs and random triangulated surfaces} \label{sec_randsurf}

Random cubic graphs also naturally come with an orientation: a cyclic ordering at each vertex of the half edges emanating from this vertex. We simply declare the cyclic order to be the same as the cyclic order of the labels on the half edges. Such an orientation gives rise to a triangulated surface. We glue a triangle over every vertex of the graph and glue the sides of triangles together according to the edges of the graph, which we do in such a way that the orientation at every vertex corresponds to an outward orientation on the triangle via the right hand rule. This means that a random cubic graph gives rise to a notion of a random surface, which we will denote by $S(\omega)$ for $\omega\in\Omega_N$.

These random surfaces were for example investigated in \cite{BM}, \cite{PS} and \cite{Pet}. In particular, in \cite{Pet}, the second author developed techniques for restricting to surfaces of a fixed topology. We will use these results to study the genus of the flip graph of $\Sigma_{g,1}$. It follows from an Euler characteristic argument that in this case the number of vertices of the corresponding graph will need to be $4g-2$. 

Again, we start with a cardinality result (Theorem B of \cite{Pen2}):
\begin{thm}\label{thm_cardfg1}\cite{Pen2} We have:
$$
\aant{\verz{\omega\in\Omega_N}{S(\omega)\text{ has }1\text{ puncture}}} \sim \frac{2\sqrt{2}}{3N} \left(\frac{3N}{e}\right)^{3N/2}
$$
for $N\rightarrow\infty$.
\end{thm}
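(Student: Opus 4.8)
My plan is to pass from the topological count to a count of permutation factorisations, convert that via character theory into an explicit alternating sum, and estimate the sum.

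\emph{Step 1 (reduction to a factorisation count).} The surface $S(\omega)$ has a single puncture exactly when the cubic ribbon graph $\Gamma(\omega)$ dual to the triangulation has a single face, i.e. when the permutation $\sigma\omega$ of $\{1,\dots,3N\}$ is a single $3N$-cycle; here $\omega$ is the fixed-point-free involution given by the pairing and $\sigma=(1\,2\,3)(4\,5\,6)\cdots$ is the product of $N$ disjoint $3$-cycles recording the cyclic orders at the triangles. Writing $T(N)$ for the quantity to be estimated, an Euler-characteristic argument gives $T(N)=0$ unless $N\equiv 2\bmod 4$. A standard conjugation-averaging argument over the conjugacy class of $\sigma$ yields
\[
T(N)=\frac{3^N N!}{3N}\,K_N,
\]
where $K_N$ is the number of factorisations of a fixed $3N$-cycle as (a fixed-point-free involution)$\cdot$(a permutation all of whose cycles have length $3$) — equivalently, the number of rooted one-faced cubic maps with $N$ vertices.

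\emph{Step 2 (Frobenius formula and hook characters).} I would compute $K_N$ via the Frobenius character formula for $\symm{3N}$. An irreducible character of $\symm{3N}$ vanishes on a $3N$-cycle unless its partition is a hook $(3N-j,1^j)$, where its value is $(-1)^j$ and its dimension is $\binom{3N-1}{j}$; so the formula collapses to a sum over $j=0,\dots,3N-1$. The hook characters at the two cycle types appearing follow from the classical identity $\sum_j\chi^{(n-j,1^j)}(\mu)\,t^j=(1+t)^{-1}\prod_i\bigl(1-(-t)^i\bigr)^{a_i(\mu)}$, with $a_i(\mu)$ the number of parts of $\mu$ equal to $i$: namely $\chi^{(3N-j,1^j)}(2^{3N/2})=a_j$ and $\chi^{(3N-j,1^j)}(3^N)=b_j$, where $a_j$ and $b_j$ are the coefficients of $t^j$ in $(1-t^2)^{3N/2}/(1+t)$ and in $(1+t^3)^N/(1+t)$ (both polynomials, since $1+t$ divides numerator). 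The combinatorial prefactors telescope — the product of $3^N N!/(3N)$ with the Frobenius prefactor $(3N-1)!!/(3^N N!)$ is $(3N-1)!!/(3N)$ — so, using $(3N-1)!!\sim\sqrt{2}\,(3N/e)^{3N/2}$ (Stirling), the theorem is equivalent to
\[
S_N:=\sum_{j=0}^{3N-1}\frac{(-1)^j a_j b_j}{\binom{3N-1}{j}}\ \longrightarrow\ 2\qquad(N\to\infty,\ N\equiv 2\bmod 4).
\]

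\emph{Step 3 (asymptotics of the sum).} Insert $1/\binom{3N-1}{j}=3N\int_0^1 u^j(1-u)^{3N-1-j}\,du$, interchange, and collect the $j$-sum into a Hadamard product:
\[
S_N=3N\int_0^1(1-u)^{3N-1}\,(\widetilde A\odot B)\!\left(\tfrac{u}{1-u}\right)du,
\]
where $\widetilde A\odot B$ is the Hadamard product of $\widetilde A(t)=(1+t)^{3N/2}(1-t)^{3N/2-1}$ (so $[t^j]\widetilde A=(-1)^j a_j$) and $B(t)=(1+t^3)^N/(1+t)$, which I would represent as a contour integral, so the integrand of $S_N$ becomes a prefactor times an $N$-th power. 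A saddle-point/Laplace analysis then shows the mass concentrates in two boundary layers. Near $u=0$ it contributes $1$, because $(\widetilde A\odot B)(w)\to a_0b_0=1$ as $w\to0$ and $3N\int_0^\varepsilon(1-u)^{3N-1}du\to1$. Near $u=1$ it again contributes $1$, because $\widetilde A\odot B$ is a polynomial of degree $3N-1$ whose top coefficient equals $1$ (for $N\equiv 2\bmod4$), so there $(1-u)^{3N-1}(\widetilde A\odot B)(u/(1-u))\approx u^{3N-1}$ and $3N\int_{1-\varepsilon}^1 u^{3N-1}du\to1$. The remaining range $\varepsilon<u<1-\varepsilon$ contributes $o(1)$, the relevant critical point being complex (reflecting the alternating signs). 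Hence $S_N\to 1+1=2$.

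\emph{Main obstacle.} Steps 1 and 2 are routine bookkeeping; the real work is the uniform error control in Step 3 — most notably the oscillatory estimate showing the bulk of the integral is negligible and that each boundary layer contributes exactly $1$ — together with carrying the parity constraint through. Two alternatives to Steps 2--3: start instead from a closed-form enumeration of rooted one-faced cubic maps (of Lehman--Walsh / Goulden--Jackson type) and apply singularity analysis; or, since a random cubic graph carries a non-trivial automorphism with probability tending to $0$ (Theorem \ref{thm_aut}), write $T(N)=3^N N!\sum_G 1/\abs{\mathrm{Aut}(G)}$, summed over one-faced cubic ribbon graphs on $N$ vertices, and feed this into the known virtual-Euler-characteristic computations for $\mathcal{M}_{g,1}$ — at the price of having to isolate the trivalent contribution from the higher-valence cells.
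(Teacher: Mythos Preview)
The paper does not prove this statement: it is quoted verbatim as ``Theorem~B of \cite{Pen2}'' and used as a black box. There is therefore no proof in the paper to compare against; you are supplying an argument where the authors simply cite.

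Your Steps~1 and~2 are correct. The reduction to factorisations of a long cycle and the collapse of the Frobenius formula to hook characters are standard, and the bookkeeping checks out: the prefactors combine to $(3N-1)!!/(3N)$, Stirling gives $(3N-1)!!\sim\sqrt{2}\,(3N/e)^{3N/2}$, and the endpoint terms $j=0$ and $j=3N-1$ of your sum $S_N$ each equal~$1$ when $N\equiv 2\pmod 4$ (for $j=3N-1$: with $3N/2=6g-3$ odd one has $a_{3N-1}=-1$, $b_{3N-1}=1$, and $(-1)^{3N-1}(-1)(1)=1$). So the target $S_N\to 2$ is exactly right.

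Step~3, however, is a sketch rather than a proof, and you say so yourself. The Beta-integral/Hadamard rewriting is fine and the two boundary-layer contributions are correctly identified, but the assertion that the bulk $u\in(\varepsilon,1-\varepsilon)$ contributes $o(1)$ is where all the difficulty sits: the coefficients $(-1)^ja_jb_j$ are not individually small (each of $|a_j|$, $|b_j|$ can be exponentially large in $N$), so the decay of $(1-u)^{3N-1}$ alone does not obviously win, and one genuinely needs a contour-integral/saddle estimate for the Hadamard product or a careful oscillatory-sum bound. This is feasible but nontrivial, and as written the argument is incomplete at precisely this point.

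Of your two alternatives, the first is by far the cleaner route and is essentially what underlies Penner's result: the number $K_N$ of rooted one-face cubic maps has an explicit closed form (Walsh--Lehman; equivalently a specialisation of the Harer--Zagier recursion), from which the stated asymptotic drops out by Stirling with no delicate cancellation to control. Your second alternative via virtual Euler characteristics of $\mathcal{M}_{g,1}$ is more roundabout and, as you note, requires isolating the trivalent cells.
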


It follows from Theorem B in \cite{Pet} that:

\begin{prp} \label{prp_circ} Let $m\in\mathbb{N}$. There exists a set of mutually independent random variables $X'_i:\mathbb{N}\rightarrow\mathbb{N}$ for $i=1,\ldots,m$ such that when we restrict to surfaces with $1$ puncture:
$$
X_{N,i}\rightarrow X'_i \text{ in distribution for }N\rightarrow\infty\text{ and }i=1,\ldots, m
$$
where the limit has to be taken over $N\in 4\mathbb{N}$. 
\end{prp}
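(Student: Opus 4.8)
\emph{Plan of proof.} The plan is to run Bollob\'as' method-of-moments proof of Theorem \ref{thm_bol} conditionally on the event
$$
A_N=\verz{\omega\in\Omega_N}{S(\omega)\text{ has one puncture}} .
$$
Since a vector of independent Poisson random variables is moment-determinate and its joint factorial moments are products, it is enough to show that for every $m$ and every $(j_1,\dots,j_m)\in\mathbb{N}^m$ the conditional factorial moments
$$
\frac{1}{\aant{A_N}}\sum_{\omega\in A_N}\prod_{i=1}^{m}\left(X_{N,i}(\omega)\right)_{j_i}
$$
converge, as $N\to\infty$ over the values for which $A_N\neq\emptyset$, to a product $\prod_{i=1}^{m}(\lambda_i')^{j_i}$ for suitable constants $\lambda_i'\ge 0$ (here $(X)_{j}=X(X-1)\cdots(X-j+1)$ as before). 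The limits $X_i'$ will then be the corresponding independent Poisson variables, and in particular mutually independent, which is what the statement asks for.

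To compute the numerator I would expand $\prod_i (X_{N,i}(\omega))_{j_i}$ as the number of ways to choose, for each $i$, an ordered $j_i$-tuple of pairwise distinct $i$-circuits of $\Gamma(\omega)$, and interchange the sums. This turns the numerator into a sum over placements $c$ of such tuples, with half-edges labelled in $\{1,\dots,3N\}$, weighted by $\aant{\verz{\omega\in A_N}{c\subset\omega}}$. The dominant part will come from the \emph{generic} placements, in which the circuits are pairwise vertex-disjoint and embedded: there $c$ fills out a fixed sub-multigraph $H$ which is a disjoint union of cycles of the prescribed lengths (hence has as many edges as vertices), and the weight is the number of pairings of the remaining half-edges whose surface still has one puncture. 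Every non-generic placement sits inside a bounded sub-multigraph with strictly more edges than vertices, and I expect the total weight of these to be negligible, by the conditional analogue of Theorem \ref{thm_noncyc}.

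Both this conditional analogue of Theorem \ref{thm_noncyc} and the asymptotics of the generic contributions should come from Theorem B of \cite{Pet}, which controls the local statistics of $S(\omega)$ conditioned on its topology. What I need from it is that, for each fixed finite sub-multigraph $H$, the ratio $\aant{\verz{\omega\in A_N}{H\subset\omega}}/\aant{A_N}$ converges to a constant depending only on the isomorphism type of $H$, that this constant is $0$ when $H$ has more edges than vertices, and that for a disjoint union of cycles it factors as the product over the cyclic components; the denominator $\aant{A_N}$ itself is supplied by Theorem \ref{thm_cardfg1}. Feeding these into the expanded numerator, together with the purely combinatorial count of labelled circuit-tuples of each type — identical to the count in Bollob\'as' unconditional argument — will give the limiting moments $\prod_i (\lambda_i')^{j_i}$, with $\lambda_i'$ the limiting conditional mean of $X_{N,i}$. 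Note that $\lambda_i'$ need not be the unconditional value $2^i/2i$ of Theorem \ref{thm_bol}: conditioning on a single puncture reweights the short circuits.

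The hard part is not the moment bookkeeping, which is essentially a transcription of the classical argument, but establishing that Theorem B of \cite{Pet} really applies in the form just described — that the local cycle statistics of the random triangulated surface asymptotically decouple from the global one-puncture constraint, uniformly over the finitely many sub-multigraph types that occur, and with errors controlled well enough to justify dominated convergence when passing to the limit inside the sum over placement types. One also has to record that $A_N$ is empty unless $N$ lies in the arithmetic progression appearing in the statement, which follows from the Euler-characteristic relation $N=4g-2$; this restriction is harmless.
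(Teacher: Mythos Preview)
Your approach is correct but takes a substantially longer route than the paper. The paper's proof is a single sentence: Theorem~B of \cite{Pet} already establishes, conditionally on the one-puncture event, the joint convergence in distribution of a \emph{finer} family of circuit counts (stratified by the oriented combinatorial type of the circuit) to mutually independent limits; since each $X_{N,i}$ is the sum of these refined counts over the finitely many oriented types of length $i$, both the convergence and the mutual independence of the $X'_i$ follow at once by continuous mapping.

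What you propose instead is to rerun Bollob\'as' method-of-moments argument conditionally, invoking \cite{Pet} only for the asymptotics of conditional subgraph-containment probabilities $\ProC{N}{H\subset\Gamma}{S\text{ has }1\text{ puncture}}$. This is sound, and the three properties you say you need from \cite{Pet} --- convergence for fixed $H$, vanishing when $H$ has more edges than vertices, and factoring over disjoint cyclic components --- are essentially what Theorem~B of \cite{Pet} proves en route to its conclusion. So you are redoing part of the work of that theorem rather than quoting its output. One small over-claim: you assert the limits $X'_i$ will be Poisson. The proposition does not say this and the paper does not need it; it happens to be true (a sum of independent Poissons is Poisson), but verifying it via your route requires exactly the factoring over components that you correctly flag as the crux, and in the oriented setting the conditional probability $\ProC{N}{c\subset\omega}{A_N}$ depends on the oriented type of $c$ and not merely on its edge count, so the bookkeeping is a little more involved than in the unconditional case.
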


\begin{proof} This follows from the fact that the random variables are sums of a finite number of converging random variables. 
\end{proof}

It also follows from the results of \cite{Pet} (Lemma 6.3 there) that if a labelled oriented graph $H$ contains no left hand turn cycle (a cycle of the graph that turns in the direction dictated by the cyclic order at every vertex) then we still have:
\begin{equation*}
\ProC{N}{H\subset\Gamma}{S\text{ has }1\text{ punture}} \leq C N^{-\aant{E(H)}}
\end{equation*}
where $C>0$ does not depend on $H$ or $N$. This means that all the arguments in the proof of Proposition \ref{prp_symm1} still work in the case where we restrict to surfaces with $1$ puncture. So we obtain:

\begin{prp}\label{prp_symm2} Let $n,k\in\mathbb{N}$. There exists a $C>0$ such that the probability:
$$
\ProC{N}{\substack{\displaystyle{\exists\; \mathrm{id}\neq \pi_1,\ldots,\pi_n\in \symm{N}\text{ such that } \mathrm{ED}_{\pi_i}(\Gamma)\leq k }\\ \displaystyle{\text{and }a(\pi_i)\geq k-1\; \forall i=1,\ldots,n \text{ and }\Gamma} \\ \displaystyle{\text{has }< Cn\text{ circuits of length }\leq 2k}}}{S\text{ has }1\text{ punture}} \rightarrow 0
$$
as $N\rightarrow\infty$.
\end{prp}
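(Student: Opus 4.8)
The plan is to rerun the proof of Proposition~\ref{prp_symm1} essentially word for word, carrying the conditioning on ``$S(\omega)$ has one puncture'' through every estimate. That proof uses only three probabilistic inputs: Lemma~\ref{lem_wor} (Wormald's bound on $\sum_{(\pi,H)}\Pro{N}{H\subset\Gamma}$ over pairs of a nontrivial permutation and a subgraph it fixes), Theorem~\ref{thm_noncyc} (a fixed subgraph with more edges than vertices has expected number of copies $\mathcal{O}(N^{-1})$), and Theorem~\ref{thm_bol} (Poisson control of the counts of short circuits, used both to say that a typical graph has only boundedly many short circuits and that each short circuit spawns only boundedly many near-automorphisms). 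There is a conditional substitute for each. Theorem~\ref{thm_bol} is replaced by Proposition~\ref{prp_circ}. Theorem~\ref{thm_noncyc} and Lemma~\ref{lem_wor} are combinatorial once one feeds in a per-subgraph probability estimate, so I would replace the ingredient $\Pro{N}{H\subset\Gamma}\asymp N^{-\aant{E(H)}}$ by the conditional estimate quoted above from Lemma~6.3 of \cite{Pet}, namely $\ProC{N}{H\subset\Gamma}{S\text{ has }1\text{ puncture}}\le CN^{-\aant{E(H)}}$ for labelled oriented $H$ with no left-hand-turn cycle, and redo the two derivations verbatim: for the conditional Theorem~\ref{thm_noncyc} one multiplies by the unchanged number $\le N^{v(H)}$ of placements of $H$, and for the conditional Lemma~\ref{lem_wor} one runs Wormald's counting of the pairs $(\pi,H)$ against this bound, reproducing a bound of the same shape $C^aN^{-a/2}a^{3a/8}$ with a new constant.

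The single place that genuinely needs an argument rather than bookkeeping is the ``no left-hand-turn cycle'' clause in the conditional estimate, and this is exactly why the hypothesis on short circuits is strengthened from ``circuits of length $\le k$'' to ``circuits of length $\le 2k$''. If a subgraph $H\subseteq\Gamma$ contains a left-hand-turn cycle that closes up, that cycle bounds a face of $S(\omega)$ on its own; since we condition on $S(\omega)$ having exactly one face and that face has boundary length growing linearly in $N$, for large $N$ a bounded subgraph cannot contain a complete left-hand-turn cycle at all, and even for unbounded $H$ the conditional probability of containing one is $0$ unless $H$ is all of $\Gamma$. The case $H=\Gamma$ corresponds to an honest automorphism and is harmless: in the relevant sum it is only the $a(\pi)=N$ term, for which even the crude bound $C^NN^{-N/2}N^{3N/8}=C^NN^{-N/8}$ from the conditional Lemma~\ref{lem_wor} is summably small. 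Hence every left-hand-turn cycle we ever need to reckon with inside the subgraphs of the argument is short, and a short left-hand-turn cycle is in particular a short circuit, so it is excluded by the hypothesis; carefully tracking the sizes of the subgraphs that occur --- a subtree on at most $2k$ vertices, together with the extra edges attaching it to $\Gamma$, can only close up circuits of length $\lesssim 2k$ --- is what pins down the constant $2k$.

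With the conditional inputs in hand the rest copies over directly: set up the same families $\mathcal{H}_N(\pi)$ and $\mathcal{H}'_N(\pi)$, the same implication that $\Gamma$ having no short circuit and $\mathrm{ED}_\pi(\Gamma)\le k$ forces some $H'\in\mathcal{H}'_N(\pi)$ inside $\Gamma$, and split $\sum_\pi\sum_{H'}\ProC{N}{H'\subset\Gamma}{S\text{ has }1\text{ puncture}}$ into $T_1(N)$ over $k-1\le a(\pi)\le 2k$ and $T_2(N)$ over $a(\pi)>2k$. For $T_2(N)$, the relations $\aant{\mathcal{H}'_N(\pi)}\lesssim a(\pi)^k\,\aant{\mathcal{H}_N(\pi)}$ and $\ProC{N}{H'\subset\Gamma}{\cdot}\lesssim N^k\,\ProC{N}{H\subset\Gamma}{\cdot}$ together with the conditional Lemma~\ref{lem_wor} bound $T_2(N)$ by a tail $\sum_{a>2k}(C')^aN^{-a/2+k}\to 0$. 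For $T_1(N)$, there are only finitely many isomorphism types of bounded-degree graphs on $\le 2k$ vertices to consider: those that are not forests either carry a left-hand-turn cycle (conditional probability $0$), or have more edges than vertices (conditional probability $\mathcal{O}(N^{-1})$ by the conditional Theorem~\ref{thm_noncyc}), or are short circuits (controlled by the hypothesis on circuit counts and the bounded number of near-automorphisms each spawns); and for the forests one uses the counting argument of Proposition~\ref{prp_symm1} that a subtree on $\ge k-1$ vertices must attach to $\Gamma$ by $\ge k+1$ edges, forcing either $\mathrm{ED}_\pi(\Gamma)>k$ or a short circuit. Summing over $\pi_1,\dots,\pi_n$ and absorbing the $<Cn$ short circuits into the constant $C$ finishes the proof. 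The main obstacle is not any one inequality but this verification itself --- confirming that every probabilistic step really factors through a per-subgraph estimate $\Pro{N}{H\subset\Gamma}\le CN^{-\aant{E(H)}}$ and that the subgraphs actually summed over are ones to which Lemma~6.3 of \cite{Pet} applies, the enlarged hypothesis on short circuits being precisely the slack that makes this uniform.
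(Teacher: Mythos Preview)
Your approach is the same as the paper's: the paper's entire proof is the one-line remark (stated just before the proposition) that Lemma~6.3 of \cite{Pet} supplies the conditional per-subgraph bound $\ProC{N}{H\subset\Gamma}{S\text{ has }1\text{ puncture}}\le CN^{-\aant{E(H)}}$ for labelled oriented $H$ with no left-hand-turn cycle, and that with this substitution the argument of Proposition~\ref{prp_symm1} carries over verbatim. You supply considerably more detail than the paper does---in particular your discussion of why left-hand-turn cycles in the relevant subgraphs are harmless under the one-puncture conditioning, and your attempt to account for the strengthening from $k$ to $2k$ in the short-circuit hypothesis, are points the paper leaves entirely implicit.
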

In particular, the analogous theorem to Theorem \ref{thm_aut} also holds in this setting: asymptotically almost surely $1$-vertex triangulations have no automorphisms. 

This means that we also have the following:
\begin{thm}\label{thm_cardfg2}\cite{Pen2}. Let $I_{g,1}$ denote the number of isomorphism classes of triangulations of $\Sigma_{g,1}$. We have:
$$
I_{g,1} \sim \frac{2}{3\sqrt{\pi}\; (4g-2)^{3/2}} \left(\frac{12g-6}{e}\right)^{2g-1}
$$
for $g\rightarrow\infty$.
\end{thm}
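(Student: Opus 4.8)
The plan is to run the argument behind Theorem \ref{thm_multigraphs1} once more, with two changes: cubic multigraphs are replaced by cubic \emph{ribbon} (oriented) multigraphs — a labelled one of which is exactly a triangulation with numbered triangles, the triangle playing the role of the vertex and the arc the role of the edge — and everything is carried out under the conditioning ``$S$ has $1$ puncture''. By the Euler characteristic count recalled above we must take $N=4g-2$, and then an isomorphism class of triangulations of $\Sigma_{g,1}$ is the same thing as an isomorphism class of cubic ribbon multigraphs on $N$ vertices whose associated surface has a single puncture. Set
\begin{equation*}
\mathcal{G}^1_N=\verz{\text{cubic ribbon multigraphs on }\{1,\ldots,N\}}{S\text{ has }1\text{ puncture}},
\end{equation*}
let $\mathcal{U}^1_N$ be the set of its isomorphism classes, so that $I_{g,1}=\aant{\mathcal{U}^1_N}$, and let $\Omega^1_N=\verz{\omega\in\Omega_N}{S(\omega)\text{ has }1\text{ puncture}}$, whose cardinality is given by Theorem \ref{thm_cardfg1}. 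As before we study the forgetful maps $\Omega^1_N\xrightarrow{\pi_1}\mathcal{G}^1_N\xrightarrow{\pi_2}\mathcal{U}^1_N$.

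First I would compute the fibers of $\pi_1$. At each vertex the cyclic order of the labels $3i-2,3i-1,3i$ must be carried to the cyclic order prescribed by the ribbon structure, which leaves $3$ choices per vertex; hence $\aant{\pi_1^{-1}(G)}=3^N/\aant{R(G)}$, where $R(G)$ is the finite group of relabellings that fix every vertex, rotate the three half-edges at each vertex, and preserve the pairing. This is where the oriented case genuinely differs from the unoriented one: reversing a loop or transposing a double edge is \emph{not} such a rotation, so $\aant{R(G)}=1$ for every $G$ outside a set whose size is controlled — via Proposition \ref{prp_circ} together with the estimate $\ProC{N}{H\subset\Gamma}{S\text{ has }1\text{ puncture}}\leq CN^{-\aant{E(H)}}$ for $H$ carrying no left-hand-turn cycle — exactly as the short-circuit counts are controlled in the proof of Theorem \ref{thm_multigraphs1}. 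Running that dominated-convergence computation verbatim gives
\begin{equation*}
\aant{\mathcal{G}^1_N}=\frac{\aant{\Omega^1_N}}{3^N}\,\ExV{N}{\aant{R(\Gamma)}\mid S\text{ has }1\text{ puncture}}\sim\frac{\aant{\Omega^1_N}}{3^N},
\end{equation*}
the correction factor tending to $1$ (and not to $e^2$, as it did in Theorem \ref{thm_multigraphs1}).

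For $\pi_2$ we have $\aant{\pi_2^{-1}(\Gamma)}=N!/\aant{\mathrm{Aut}(\Gamma)}$, and by the consequence of Proposition \ref{prp_symm2} noted above — a $1$-vertex triangulation asymptotically almost surely has no nontrivial automorphism — the isomorphism classes carrying automorphisms may be discarded exactly as in the proof of Theorem \ref{thm_multigraphs1}, so that $\aant{\mathcal{G}^1_N}\sim N!\cdot I_{g,1}$. Combining the two displays gives $I_{g,1}\sim\aant{\Omega^1_N}/(3^N\,N!)$, and substituting Theorem \ref{thm_cardfg1} and Stirling's formula, using $3N=12g-6$ and $N/2=2g-1$, produces exactly the claimed asymptotic. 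I expect the main obstacle to be the first step: identifying the fiber of $\pi_1$ correctly in the oriented setting and, above all, verifying that the short-circuit correction factor really is $1$ (equivalently, that $\ExV{N}{\aant{R(\Gamma)}\mid S\text{ has }1\text{ puncture}}\to 1$), since it is precisely this value that pins down the constant $\tfrac{2}{3\sqrt{\pi}}$ in the statement; the automorphism-discarding step is by contrast a routine transcription of the argument already given for Theorem \ref{thm_multigraphs1}.
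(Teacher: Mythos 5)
Your proposal is correct and follows essentially the same route as the paper: set up the two forgetful maps, show the $\pi_1$-fiber has size $3^N$ up to a negligible correction, discard graphs with automorphisms using the analogue of Theorem~\ref{thm_aut} via Proposition~\ref{prp_symm2}, and combine with Theorem~\ref{thm_cardfg1} and Stirling. The one place you are actually a bit more careful than the paper is the $\pi_1$-fiber: the paper simply takes $\aant{\pi_1^{-1}(G)}=3^N$ on the grounds that loops are absent and orientation kills the double-edge factor, whereas you correctly record the exact formula $3^N/\aant{R(G)}$ with $R(G)$ the stabilizer of cyclic rotations preserving the pairing, and then argue that $\ExV{N}{\aant{R(\Gamma)}\mid S\text{ has }1\text{ puncture}}\to 1$ since $R(G)\neq\{e\}$ forces a triple edge, a structure whose conditional appearance probability decays like $N^{-1}$.
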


\begin{proof} We can run the same proof as for Theorem \ref{thm_multigraphs1}. The difference however is that we do not need to worry about $1$- or $2$-circuits: there will not be any $1$-circuits and because of the orientation $2$-circuits no longer add a factor $2$. Furthermore, asymptotically there are no automorphisms. So we get:
\begin{equation*}
I_{g,1} \sim \frac{1}{3^{4g-2}(4g-2)!}\cdot \aant{\verz{\omega\in\Omega_{4g-2}}{S(\omega)\text{ has }1\text{ puncture}}}
\end{equation*}
Filling in Theorem \ref{thm_cardfg1} and using Stirling's approximation gives the result.
\end{proof}

\section{The genus of the (modular) curve graph}

For the curve graphs we will not need the random graph approach. Instead we rely on two classical theorems by Ringel and Ringel-Youngs. The first one is about the bipartite complete graph $K_{m,n}$. This is the graph with vertices $\{x_1,\ldots,x_m,y_1,\ldots,y_n\}$ and edges $\{x_i,y_j\}$ for all $i=1,\ldots m$, $j=1,\ldots,n$.

\begin{thm}\cite{Rin2} Let $m,n\in\mathbb{N}$. We have:
$$
\gamma(K_{m,n}) = \ceil{\frac{(m-2)(n-2)}{4}}
$$
\end{thm}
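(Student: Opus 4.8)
The plan is to separate the statement into an easy lower bound, which is immediate from Proposition \ref{prp_general}, and a hard upper bound, which is the genuine content of \cite{Rin2} and requires an explicit construction of an efficient embedding. I would first dispose of the degenerate cases: if $m=1$ (or symmetrically $n=1$), then $K_{1,n}$ is a star, hence a tree, hence planar, and $\ceil{(m-2)(n-2)/4}=\ceil{(2-n)/4}\leq 0$, so the formula holds with the usual convention that a non-positive genus bound reads as $0$. So from now on assume $m,n\geq 2$.

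For the lower bound, note that $K_{m,n}$ is bipartite and contains a $4$-cycle, so its girth is exactly $h=4$; it has $p=m+n$ vertices and $q=mn$ edges. Feeding these into the lower bound of Proposition \ref{prp_general} gives
$$\gamma(K_{m,n})\;\geq\; 1+\frac{1}{2}\left(1-\frac{2}{4}\right)mn-\frac{1}{2}(m+n)\;=\;\frac{mn-2m-2n+4}{4}\;=\;\frac{(m-2)(n-2)}{4},$$
and since the genus is an integer we may round up to obtain $\gamma(K_{m,n})\geq\ceil{(m-2)(n-2)/4}$. (Equivalently one re-derives this directly: in any cellular embedding into $\Sigma_g$ every face is bounded by at least $4$ edges by bipartiteness, so $2q\geq 4f$, and combining with $f=2-2g-p+q$ forces $g\geq (q-2p+4)/4$.)

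The upper bound is the main obstacle. One must exhibit, for every pair $m,n$, a cellular embedding of $K_{m,n}$ into $\Sigma_g$ with $g=\ceil{(m-2)(n-2)/4}$ whose faces are as small as possible, i.e.\ all quadrilaterals when $(m-2)(n-2)\equiv 0\pmod 4$ and otherwise all quadrilaterals except one short controlled face. The way I would organize this is through rotation systems: an embedding of $K_{m,n}$ is encoded by a cyclic ordering of the incident edges at each vertex, and the goal is to choose these rotations so that the induced face-tracing permutation has the minimal number of orbits dictated by the Euler-characteristic count. The standard tool for building such rotation systems is the Ringel--Youngs current graph machinery: one splits into cases according to the residues of $m$ and $n$ modulo $4$ (plus a handful of small exceptional pairs), and in each case writes down an explicit index-one current graph whose derived rotation system yields an embedding of $K_{m,n}$ of the claimed genus. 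The real work — verifying that the derived graph is indeed $K_{m,n}$, that the lone large face (when present) is traced as claimed, and that the vertex/edge/face totals match $\ceil{(m-2)(n-2)/4}$ — is precisely the intricate casework carried out in \cite{Rin2}, and I would not reproduce it beyond citing it.
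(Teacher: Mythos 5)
The paper does not prove this theorem at all --- it is quoted as a classical result and attributed entirely to \cite{Rin2}, so there is no in-paper argument to compare against. Your proposal does something slightly more: it observes that the lower bound $\gamma(K_{m,n}) \geq \ceil{(m-2)(n-2)/4}$ falls out of Proposition~\ref{prp_general} (girth $4$, $p=m+n$, $q=mn$) and then defers the matching upper bound --- the genuine content of Ringel's theorem --- to the citation, correctly identifying it as an explicit rotation-system/current-graph construction with casework on residues mod $4$. That division of labor is accurate and matches the spirit in which the paper uses the result.

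One small inaccuracy worth flagging: for $m=1$ the formula is not literally correct. With $m=1$ and $n\geq 6$ we get $\ceil{(1-2)(n-2)/4}=\ceil{(2-n)/4}\leq -1$, which is not $\gamma(K_{1,n})=0$; your phrase ``non-positive genus bound reads as $0$'' is papering over the fact that the ceiling expression itself is negative, not just a lower bound. The correct statement restricts to $m,n\geq 2$ (in which case $(m-2)(n-2)\geq 0$ and the formula is exact), and indeed that is how Ringel's theorem is standardly stated. Since the paper only ever invokes the theorem in that range, this has no downstream effect, but the degenerate case should be excluded rather than reinterpreted.
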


Using this we can compute the genus of the curve graph.
\begin{thm}\label{thm_cg} Let $g\geq 2$. We have:
$$
\gamma(\CG{\Sigma_{g}}) =\infty
$$
\end{thm}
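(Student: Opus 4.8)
The plan is to realize arbitrarily large complete bipartite graphs $K_{m,n}$ as subgraphs of $\CG{\Sigma_g}$ and to combine this with the formula $\gamma(K_{m,n}) = \ceil{\frac{(m-2)(n-2)}{4}}$ just quoted, together with the monotonicity of the graph genus under subgraphs. This monotonicity is immediate from the definition: if $\Gamma'$ is a subgraph of $\Gamma$, then any continuous injection $\Gamma\hookrightarrow\Sigma_h$ restricts to a continuous injection $\Gamma'\hookrightarrow\Sigma_h$, so $\gamma(\Gamma')\leq\gamma(\Gamma)$. Hence it suffices to show that $\CG{\Sigma_g}$ contains a copy of $K_{m,n}$ for all $m,n\in\mathbb{N}$, since then $\gamma(\CG{\Sigma_g}) \geq \ceil{\frac{(m-2)(n-2)}{4}}$, and the right-hand side tends to infinity as $m,n\to\infty$.

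To produce these subgraphs I would use that $g\geq 2$ to fix a separating simple closed curve $\delta$ cutting $\Sigma_g$ into two subsurfaces $R_1\cong\Sigma_{1,1}$ and $R_2\cong\Sigma_{g-1,1}$, both of positive genus. Each $R_j$ contains an embedded one-holed torus, and a one-holed torus carries infinitely many distinct isotopy classes of essential, non-peripheral simple closed curves (its Farey slopes). Since $R_1$ and $R_2$ are essential subsurfaces with disjoint interiors, choosing $m$ distinct essential non-peripheral curves supported in $R_1$ and $n$ such curves supported in $R_2$ yields $m+n$ pairwise distinct vertices $x_1,\ldots,x_m,y_1,\ldots,y_n$ of $\CG{\Sigma_g}$ in which every $x_i$ can be realized disjointly from every $y_j$; this is precisely a copy of $K_{m,n}$ inside $\CG{\Sigma_g}$. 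Letting $m,n\to\infty$ and invoking Ringel's theorem (\cite{Rin2}) then gives $\gamma(\CG{\Sigma_g})=\infty$.

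I do not expect a genuine obstacle here; the argument is soft. The one point that deserves a line of justification is that distinct essential non-peripheral isotopy classes of curves inside an essential subsurface remain distinct, and that curves supported in $R_1$ stay disjoint from (and non-isotopic to) curves supported in $R_2$, when all of these are viewed as curves on $\Sigma_g$ — a standard consequence of the incompressibility of $R_1,R_2$ and the change-of-coordinates principle. An alternative to fixing $\delta$ would be to cut $\Sigma_g$ along a non-separating curve and locate two disjoint one-holed tori in the complement, but the separating-curve version above is the cleanest bookkeeping.
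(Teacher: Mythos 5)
Your proof is correct and takes essentially the same approach as the paper's: locate two disjoint one-holed torus subsurfaces of $\Sigma_g$, fill each with arbitrarily many essential curves to obtain a $K_{m,n}$ subgraph of $\CG{\Sigma_g}$, and apply Ringel's formula for $\gamma(K_{m,n})$ together with monotonicity of genus under subgraphs. The only cosmetic difference is that the paper produces the two curve families explicitly as Dehn twist orbits $D_{\beta_i}^k\alpha_i$ of a dual pair in each handle, whereas you invoke directly that a one-holed torus carries infinitely many distinct isotopy classes of essential curves.
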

\begin{proof} We need to prove that there exists no finite genus surface $\Sigma_{g}$. We will do this by embedding $K_{m,m}$ into  $\CG{\Sigma_{g}}$ for arbitrarily large $m$. We consider curves $\alpha_1$, $\beta_1$, $\alpha_2$ and $\beta_2$ on $\Sigma_{g,n}$ such that:
\begin{equation*}
i(\alpha_i,\beta_j)=\delta_{ij},\; i(\alpha_1,\alpha_2)=0\text{ and }i(\beta_1,\beta_2)=0
\end{equation*}
Curves with the properties of $\alpha_1$, $\beta_1$, $\alpha_2$ and $\beta_2$ exist because of the assumption on the genus. Now let $D_{\beta_i}$ denote the Dehn twist around $\beta_i$ for $i=1,2$. Then the vertices:
\begin{equation*}
\left\{\alpha_1,D_{\beta_1}\alpha_1,\ldots, D_{\beta_1}^{m-1}\alpha_1,\alpha_2,D_{\beta_2}\alpha_2,\ldots D_{\beta_2}^{m-1}\alpha_2\right\}
\end{equation*}
induce a $K_{m,m}$.
\end{proof}
We have already noted that the modular curve graph of a closed surface is a complete graph with loops added to all or all but one of the vertices. These loops do no change the genus of a graph, so we can ignore them. The genus of the complete graph is known through a classical theorem of Ringel and Youngs:
\begin{thm}\cite{RY} Let $n\in\mathbb{N}$. We have:
$$
\gamma(K_n) = \ceil{\frac{(n-3)(n-4)}{12}}
$$
\end{thm}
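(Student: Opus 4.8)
The plan is to prove the two inequalities separately; only the upper bound requires real work. For the lower bound, apply Proposition \ref{prp_general} to $\Gamma = K_n$ (with $n \geq 3$): here $p = n$, $q = \binom{n}{2}$ and the girth is $h = 3$, so the displayed lower bound equals
$$
1 + \frac{1}{2}\left(1 - \frac{2}{3}\right)\binom{n}{2} - \frac{n}{2} = \frac{(n-3)(n-4)}{12},
$$
and since $\gamma(K_n)$ is an integer we may round up. Concretely this is just the Euler-formula estimate: in any $2$-cell embedding every face meets at least three edges, so $f \leq \tfrac{2}{3}q$ and hence $2 - 2\gamma = n - \binom{n}{2} + f \leq n - \tfrac{1}{3}\binom{n}{2}$. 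The few degenerate small cases are checked by hand.

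All the content is in the matching upper bound: for each $n$ one must exhibit an embedding of $K_n$ into $\Sigma_g$ with $g = \ceil{(n-3)(n-4)/12}$. The strategy is a case analysis on $n \bmod 12$. For $n \equiv 0, 3, 4, 7 \pmod{12}$ one seeks a \emph{triangular} embedding --- every face a triangle --- because for these residues $(n-3)(n-4)/12$ is precisely the genus that forces $f = \tfrac{2}{3}q$; for the remaining residues a bounded deficiency in the face count is unavoidable and one permits a few larger faces. In every case the embedding is prescribed by a rotation system (a cyclic ordering of the incident edges at each vertex, in the sense of Edmonds), and one verifies by tracing faces that the resulting surface has the required Euler characteristic.

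The engine producing these rotation systems is the theory of current graphs (Gustin, Youngs): one works with a small auxiliary graph, drawn on a surface with directed edges labelled by elements of a finite abelian group --- typically $\mathbb{Z}_n$ or a near relative --- subject to Kirchhoff's current law at each vertex; reading off rotations and currents then yields, almost mechanically, a triangular or near-triangular rotation scheme for $K_n$. The appropriate current graph differs from one residue class to the next, and several classes need extra devices: handle subtraction, adding or deleting a handful of vertices and patching the embedding, or allowing ``vortices'' at which Kirchhoff's law is relaxed.

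I expect the main obstacle to be exactly these irregular residue classes --- above all $n \equiv 7 \pmod{12}$, historically the last triangulation case to be settled, together with the ``additional adjacency'' arguments needed when $(n-3)(n-4)/12$ fails to be an integer --- where no single current-graph template suffices and each class must be treated by a separate, carefully verified construction. Collecting all twelve cases (plus the small $n$ handled directly) is what makes the full argument long; conceptually, everything reduces to the Euler estimate above together with the current-graph machinery for realising it sharply.
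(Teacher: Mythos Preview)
The paper does not prove this theorem: it is quoted as the Ringel--Youngs theorem with citation \cite{RY} and used as a black box to deduce Corollary~\ref{cor_mc}. So there is no ``paper's own proof'' to compare against.

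Your lower bound is fine and is exactly what Proposition~\ref{prp_general} gives. Your outline of the upper bound is an accurate high-level description of the Ringel--Youngs strategy (rotation systems, current graphs, the residue-class case split, vortices and additional-adjacency patches), but as written it is a roadmap rather than a proof: the actual constructions for the twelve residue classes occupy an entire monograph (Ringel, \emph{Map Color Theorem}, 1974), and the hard cases you flag --- especially $n\equiv 7\pmod{12}$ --- each require bespoke current graphs whose verification is substantial. Since the paper treats the result as an imported theorem, a citation is the appropriate level of detail here; if you wish to include more, the lower bound computation you wrote is worth keeping, while for the upper bound a reference to \cite{RY} (or Ringel's book) is both standard and sufficient.
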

In the case of the modular curve graph we need to compute its number of vertices. This is given by the number of homeomorphism types of curves on $\Sigma_{g}$ which is equal to
\begin{equation*}
\floor{\frac{g}{2}}+1
\end{equation*}
Thus as a corollary to the Ringel-Youngs theorem, we obtain an exact formula for the genus of the modular curve graph. 
\begin{cor}\label{cor_mc} Let $g\in\mathbb{N}$. Then:
$$
\gamma(\MC{\Sigma_{g}}) = \ceil{\frac{(\floor{\frac{g}{2}}-2)(\floor{\frac{g}{2}}-3)}{12}}
$$
\end{cor}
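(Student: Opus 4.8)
The plan is to reduce the statement to the Ringel--Youngs theorem using the explicit description of $\MC{\Sigma_g}$ recalled above. Its vertices are the homeomorphism types of simple closed curves on $\Sigma_g$, of which there are exactly $n := \floor{g/2}+1$ (one non-separating type and $\floor{g/2}$ separating ones), and any two of these can be realized disjointly, so every pair of distinct vertices is joined by an edge. In addition, some of the vertices carry a loop (all of them when $g$ is odd, all but the balanced separating curve when $g$ is even). Hence $\MC{\Sigma_g}$ is the complete graph $K_n$ with at most $n$ loops attached.

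First I would observe that the loops play no role for the genus: given any embedding $K_n \hookrightarrow \Sigma$, a loop at a vertex $v$ can be drawn inside a small disk neighbourhood of $v$ that meets no other edge, and deleting loops never obstructs an embedding, so $\gamma(\MC{\Sigma_g}) = \gamma(K_n)$. Then I would invoke the Ringel--Youngs theorem, $\gamma(K_n) = \ceil{(n-3)(n-4)/12}$, and substitute $n = \floor{g/2}+1$; since $(n-3)(n-4) = (\floor{g/2}-2)(\floor{g/2}-3)$, this is exactly the asserted formula.

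The content of the corollary lies entirely in the Ringel--Youngs computation of $\gamma(K_n)$, which we take as given, so there is no genuine obstacle; the only elementary point to verify is that loops do not change the genus of a graph, which is immediate from the argument above. One should keep in mind that the Ringel--Youngs formula equals $\gamma(K_n)$ only for $n \geq 3$, so for the few smallest values of $g$ (where $\MC{\Sigma_g}$ is in fact planar) the identity is to be read with that caveat.
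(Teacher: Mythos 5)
Your argument matches the paper's proof exactly: identify $\MC{\Sigma_g}$ as $K_{\lfloor g/2\rfloor+1}$ with loops, discard the loops since they do not affect genus, and apply Ringel--Youngs. The added caveat about small $n$ is a reasonable clarification but does not change the substance.
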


\section{The genus of the (modular) pants graph}

For the pants graph we have:
\begin{thm}\label{thm_pg} Let $g\geq 2$, then:
$$
\gamma(\PG{\Sigma_g}) =\infty
$$
\end{thm}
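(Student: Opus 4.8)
The plan is to follow the template of the proof of Theorem~\ref{thm_cg}: we exhibit inside $\PG{\Sigma_g}$ a family of subgraphs of unbounded genus and conclude with Proposition~\ref{prp_general}. The geometric input is that an elementary move on a curve bounding a single pair of pants of a pants decomposition is a ``Farey move'' inside the one-holed torus obtained by regluing that pair of pants to itself, and that Farey moves in two subsurfaces with disjoint interiors neither interfere nor change two curves at once.

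Concretely, fix two one-holed tori $\Sigma',\Sigma''\subseteq\Sigma_g$ with disjoint interiors: for $g\geq 3$ they may be taken disjoint, and for $g=2$ one takes a separating curve $\gamma_0$ whose two complementary components are one-holed tori and lets $\Sigma',\Sigma''$ be those components. Complete $\{\partial\Sigma',\partial\Sigma''\}$ to a pants decomposition of $\Sigma_g$ by a (possibly empty) family $C$ of curves lying in $\Sigma_g\setminus(\Sigma'\cup\Sigma'')$. For essential simple closed curves $a\subseteq\Sigma'$ and $b\subseteq\Sigma''$ set
\[
  \mathcal{P}_{a,b}=C\cup\{\partial\Sigma',\partial\Sigma''\}\cup\{a,b\},
\]
a pants decomposition of $\Sigma_g$. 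Since $\Sigma'$ and $\Sigma''$ are incompressible, non-isotopic choices of $(a,b)$ give distinct vertices of $\PG{\Sigma_g}$; and $\mathcal{P}_{a,b}$ and $\mathcal{P}_{a',b'}$ are joined by an edge precisely when ($b=b'$ and $a,a'$ differ by a Farey move in $\Sigma'$) or ($a=a'$ and $b,b'$ differ by a Farey move in $\Sigma''$). Indeed $a$ bounds the single pair of pants $\Sigma'\setminus a$ of $\mathcal{P}_{a,b}$, so an elementary move on $a$ replaces it by a curve meeting it once inside $\Sigma'$, while an elementary move on any curve of $C\cup\{\partial\Sigma',\partial\Sigma''\}$ produces a decomposition outside the family. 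Hence the $\mathcal{P}_{a,b}$ span a copy of the Cartesian product $\mathrm{Far}(\Sigma')\,\square\,\mathrm{Far}(\Sigma'')$ of two Farey graphs inside $\PG{\Sigma_g}$.

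Finally, for every $m\geq 3$ the Farey graph contains the fan $F_m$ on $m$ vertices --- the slope $1/0$ joined to the consecutive slopes $0,1,\dots,m-2$, together with the $m-2$ edges between successive integers --- which has $2m-3$ edges and girth $3$. Thus $F_m\,\square\,F_m$ is a connected subgraph of $\PG{\Sigma_g}$ with $p=m^2$ vertices, $q=2m(2m-3)$ edges and girth $3$, so Proposition~\ref{prp_general} gives
\[
  \gamma\big(\PG{\Sigma_g}\big)\;\geq\;\gamma\big(F_m\,\square\,F_m\big)\;\geq\;1+\tfrac16 q-\tfrac12 p\;=\;\tfrac16 m^2-m+1 ,
\]
which tends to $\infty$ with $m$. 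As every closed surface is therefore too small to admit an injection from $\PG{\Sigma_g}$, we conclude $\gamma(\PG{\Sigma_g})=\infty$.

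The points needing care are all in the second paragraph: that each $\mathcal{P}_{a,b}$ is a genuine pants decomposition, that the vertex map $(a,b)\mapsto\mathcal{P}_{a,b}$ is injective, and that the edge set among the $\mathcal{P}_{a,b}$ is exactly the product edge set (so that short elementary moves cannot ``leak'' outside $\Sigma'\cup\Sigma''$ nor alter two curves simultaneously). These are routine facts about incompressible subsurfaces and pants decompositions, and I do not anticipate a real obstacle --- in contrast with Theorems~\ref{thm_mp} and \ref{thm_mf}, no probabilistic ingredient is required. The proof rests only on the elementary observation that $\PG{\Sigma_g}$ contains a product of two Farey graphs, which already has infinite genus by the above counting bound.
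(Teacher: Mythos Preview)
Your proof is correct and follows essentially the same approach as the paper: both exhibit two disjoint one-holed tori, vary the interior curves to obtain a family of pants decompositions whose induced subgraph is a product of two Farey-type pieces, and then apply Proposition~\ref{prp_general} to finite ``fans'' to obtain genus growing like $m^2/6$. The paper writes the fan vertices as $\{\alpha_i,\beta_i,D_{\alpha_i}\beta_i,\dots,D_{\alpha_i}^m\beta_i\}$ and computes degrees directly in the resulting $Z_m$, whereas you phrase the same object as $F_m\,\square\,F_m$ inside $\mathrm{Far}(\Sigma')\,\square\,\mathrm{Far}(\Sigma'')$; the underlying construction and the numerical bound are identical.
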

\begin{proof} We will prove this by constructing a subgaph $Z_m\subset\PG{\Sigma_g}$ with $\gamma(Z_m)\gtrsim m^2/6$ for arbitrarily large $m$. We consider a pants decomposition $P$ of $\Sigma_g$ that contains two curves that cut off distinct one holed tori. So $P$ has two curves, which we denote $\alpha_1$ and $\alpha_2$, that are found inside these one holed tori (the curves look like those in Figure \ref{pic4}). We choose two curves $\beta_1$ and $\beta_2$ that also lie entirely in the one holed tori and that satisfy\begin{equation*}
i(\beta_i,\alpha_j)=\delta_{ij}
\end{equation*}
Let $m\in\mathbb{N}$. We look at the set of pairs of curves:
\begin{equation*}
\{\alpha_1,\beta_1,D_{\alpha_1}\beta_1,\ldots,D_{\alpha_1}^m\beta_1\}\times\{\alpha_2,\beta_2,D_{\alpha_2}\beta_2,\ldots,D_{\alpha_2}^m\beta_2\}
\end{equation*}
Each pair $(a_1,a_2)$ in this set corresponds to a pants decomposition $P(a_1,a_2)$ by replacing $\alpha_1$ in $P$ with $a_1$ and $\alpha_2$ in $P$ with $a_2$. We will identify this set of pairs of curves with the corresponding set of pants decompositions and consider the subgraph $Z_m\subset\PG{\Sigma_g}$ it induces. We have:
\begin{equation*}
p(Z_m)=(m+2)^2
\end{equation*}
Furthermore:
$$
\deg\left(\alpha_1,D_{\alpha_2}^k\beta_2\right)=\deg\left(D_{\alpha_1}^k\beta_1,\alpha_2\right)=m+4
$$
and 
$$
\deg\left(D_{\alpha_1}^k\beta_1,D_{\alpha_2}^l\beta_2\right)=6\text{ for }k,l=1,\ldots,m-1
$$
This means that:
\begin{equation*}
q(Z_m) \geq \frac{1}{2}\left(2(m-1)(m+4)+6(m-1)^2\right)\geq 4m^2-1
\end{equation*}
Finally, we have $h(Z_m)=3$. Proposition \ref{prp_general} now gives the desired result.
\end{proof}

To compute the genus of the modular pants graph, we will use the random graph tools from the previous two sections. The reason we can use these results is that pants decompositions of $\Sigma_g$ are in one to one correspondence with cubic graphs on $2g-2$ vertices. The correspondence is given by the dual graph to a pants decomposition. This is the graph with the pairs of pants as vertices and an edge between every pair of pairs of pants per curve they share. Note that this dual graph can in fact be a multigraph; it can contain loops corresponding to pairs of pants like the one in Figure \ref{pic4} and it can contain a $2$-cycle when two distinct pairs of pants share two curves. 

So, the vertices of $\MP{\Sigma_g}$ can be seen as homeomorphism types of cubic multigraphs. When counting the edges of $\MP{\Sigma_g}$, we only need to consider elementary moves that make a difference on the graph level. On the graph level the elementary moves in Figures \ref{pic2}, \ref{pic3} and \ref{pic4} look as follows:
\begin{figure}[H]
\begin{center} 
\includegraphics[scale=0.25]{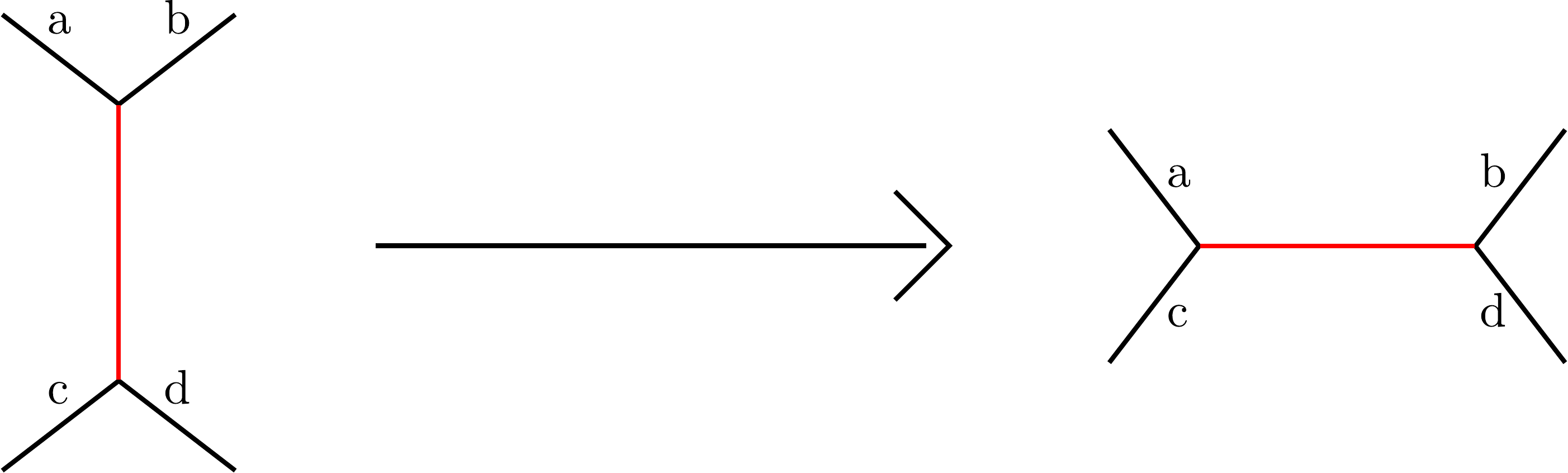} 
\caption{Elementary move 1. Half edges $a$,$b$,$c$ and $d$ represent pants curves.}
\label{pic5}
\end{center}
\end{figure}

\begin{figure}[H]
\begin{center} 
\includegraphics[scale=0.25]{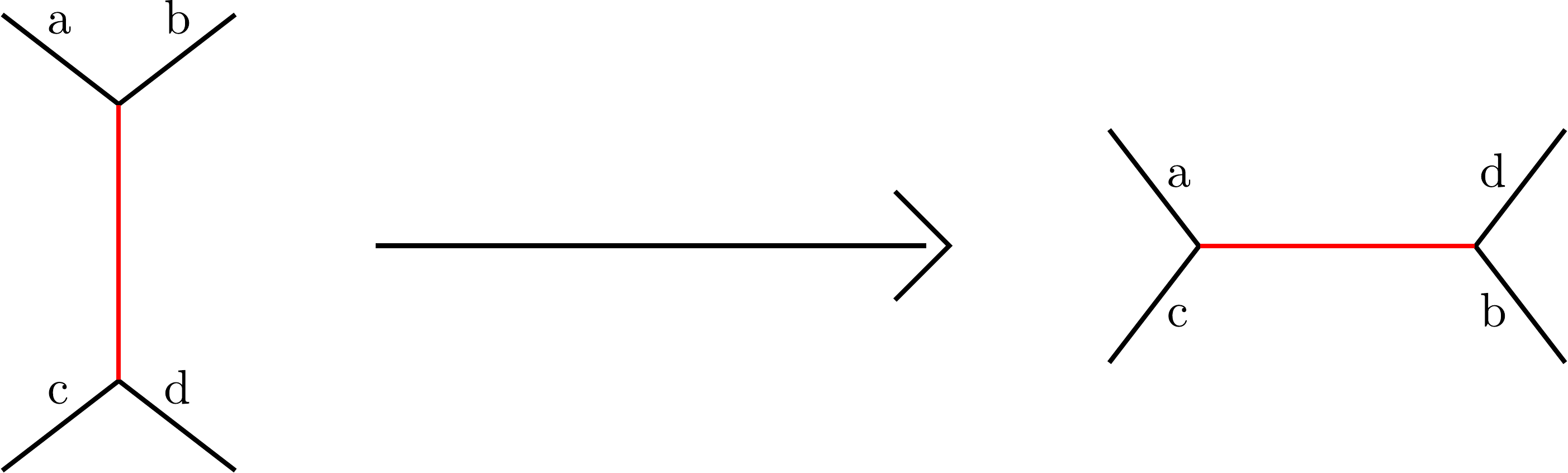} 
\caption{Elementary move 2. Half edges $a$,$b$,$c$ and $d$ represent pants curves.}
\label{pic6}
\end{center}
\end{figure}

\begin{figure}[H]
\begin{center} 
\includegraphics[scale=0.25]{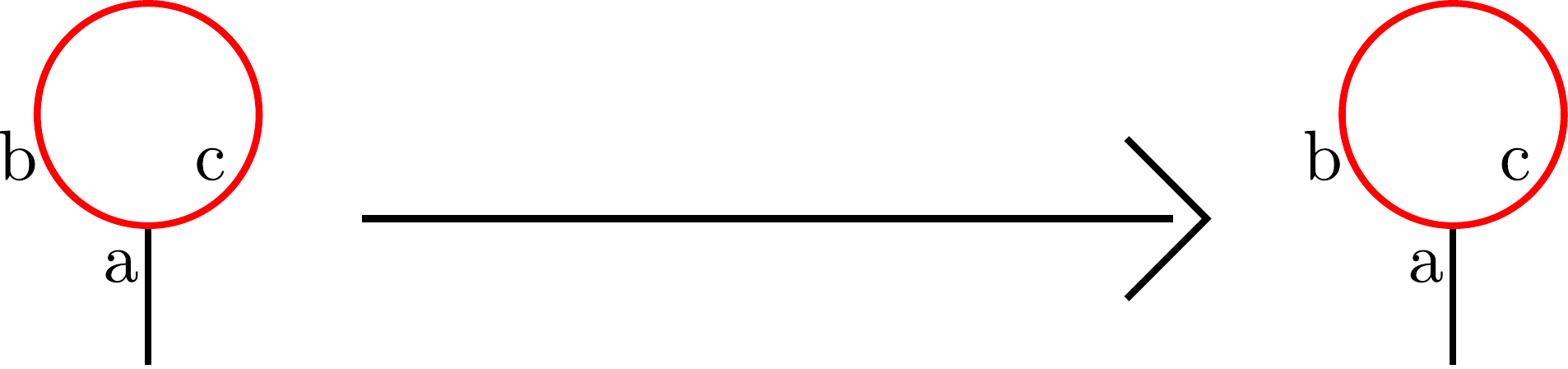} 
\caption{Elementary move 3. Half edge $a$ represents a pants curve.}
\label{pic7}
\end{center}
\end{figure}

The labels in Figures \ref{pic6} and \ref{pic7} are meant to indicate the difference between the two corresponding moves. Also note that move 3 does not do anything to the homeomorphism type of the pants decomposition. This means that the corresponding edge in $\PG{\Sigma_g}$ projects to a loop in $\MP{\Sigma_g}$.

Before we state out theorem, we define the following notation.
\begin{dff}
Let $f,g:\mathbb{N}\rightarrow\mathbb{N}$ and $c_1,c_2>0$. If
$$
\liminf_{n\rightarrow\infty} \frac{f(n)}{g(n)}\geq c_1 \text{ and }\limsup_{n\rightarrow\infty} \frac{f(n)}{g(n)}\leq c_2
$$
we write
$$
f(n)\;\sim_{c_1,c_2}\;g(n) \text{ for }n\rightarrow\infty
$$
\end{dff}

We can now state our theorem. 
\begin{thm}\label{thm_mp} We have:
$$
\gamma\left(\MP{\Sigma_g}\right)\; \sim_{c_1,c_2}\; \frac{1}{\sqrt{2g-2}}\cdot\left(\frac{6g-6}{4e} \right)^g
$$
for $g\rightarrow\infty$, where $c_1=\frac{1}{3e\sqrt{\pi}}$ and $c_2=\frac{e^{3}}{\sqrt{\pi}}$.
\end{thm}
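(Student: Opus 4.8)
The plan is to run $\MP{\Sigma_g}$ through Proposition \ref{prp_general}. Since loops and multiple edges do not affect the genus we may work with the underlying simple graph, so everything comes down to the number of vertices $p=p(\MP{\Sigma_g})$ and edges $q=q(\MP{\Sigma_g})$, together with the observations that this graph is connected (being a quotient of the connected graph $\PG{\Sigma_g}$) and has girth $\geq 3$. The vertex count is immediate from the dual-graph description recalled above: vertices of $\MP{\Sigma_g}$ are exactly isomorphism classes of cubic multigraphs on $2g-2$ vertices, so $p=I_{2g-2}$ and Theorem \ref{thm_multigraphs1} gives
$$
p \;\sim\; \frac{e^{2}}{\sqrt{\pi(2g-2)}}\left(\frac{6g-6}{4e}\right)^{g-1}\qquad\text{for }g\to\infty.
$$
It therefore suffices to establish $q\sim(3g-3)\,p$: then $p=o(q)$, and feeding $q$ and $p$ into the two bounds of Proposition \ref{prp_general} — with $h=3$ in the lower one, which is all we need since the true girth is at least $3$ — and applying Stirling's formula yields the asserted estimates, the upper bound coming from $\gamma\leq\tfrac12+\tfrac12 q-\tfrac12 p$ and the lower bound, with considerable room to spare, from $\gamma\geq 1+\tfrac16 q-\tfrac12 p$.

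For the upper bound on $q$, a cubic multigraph on $2g-2$ vertices has $3g-3$ edges; an elementary move of type $3$ sits on a loop-edge and leaves the homeomorphism type unchanged, while a type $1$ or $2$ move on a non-loop edge re-pairs the four boundary curves of the associated four-holed sphere in one of at most two ways (compare Figures \ref{pic5} and \ref{pic6}). Hence every vertex of $\MP{\Sigma_g}$ has degree at most $2(3g-3)=6g-6$, so $q\leq(3g-3)\,p$, which is everything the upper bound requires.

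For the matching lower bound on $q$ I would show that asymptotically almost every cubic multigraph $G$ on $N=2g-2$ vertices yields a vertex $[G]$ of degree $6g-6-O(1)$, i.e.\ that essentially none of the $\leq 6g-6$ candidate one-move neighbours of $[G]$ coincides with another one or with $[G]$ itself. First discard the $o(p)$ graphs with a non-trivial automorphism (Theorem \ref{thm_aut}) and, for a fixed $\varepsilon>0$, the graphs with more than a constant $C=C(\varepsilon)$ circuits of bounded length, which by Theorem \ref{thm_bol} form a proportion at most $\varepsilon+o(1)$ of the total; for the graphs that survive, all but $O(1)$ of the edges carry a four-holed sphere with four distinct boundary curves, leaving $6g-6-O(1)$ candidate moves. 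A coincidence between two of these — two moves producing isomorphic multigraphs, or a move returning $G$ — amounts to a non-trivial permutation of the vertex set of $G$ that agrees with an automorphism on all but a bounded number of edges (a single elementary move disturbs at most some fixed number $k$ of graph-edges), i.e.\ to a map of edge defect $\leq 2k$; and its support has size $\geq 2k-1$ unless the two affected edges share a vertex, in which case the local configuration forces one of the already-excluded short circuits. Proposition \ref{prp_symm1}, applied to cubic multigraphs on $2g-2$ vertices with $n$ the number of such maps (bounded, since $G$ has $\leq C$ short circuits and each contributes only finitely many), says exactly that this a.a.s.\ does not happen. Hence $q\geq\tfrac12\,(1-\varepsilon-o(1))\,p\,(6g-6-O(1))$, and letting $\varepsilon\to 0$ gives $q\sim(3g-3)\,p$.

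The main obstacle is the lower bound, and within it the delicate but essentially combinatorial bookkeeping that casts a collision of elementary moves into the precise form required by Proposition \ref{prp_symm1}: one must bound uniformly the number of graph-edges that a single elementary move can change (to control the edge defect of the resulting near-automorphism), bound its support from below away from short circuits (to secure the hypothesis $a(\pi_i)\geq k-1$), and verify that each short circuit of bounded length accounts for at most boundedly many such near-automorphisms (the linear-in-$n$ circuit allowance built into Proposition \ref{prp_symm1}). Everything else — connectedness, the girth bound, the vertex count, and the final Stirling estimate — is routine.
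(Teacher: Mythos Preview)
Your approach is essentially the paper's: bound the genus via Proposition~\ref{prp_general}, get the vertex count from the enumeration of cubic multigraphs, bound degrees above by $6g-6$, and for the lower bound show that a generic vertex has degree $6g-6-O(1)$ by translating a collision of elementary moves into a near-automorphism of the dual cubic graph with bounded edge defect, then invoke Proposition~\ref{prp_symm1}. The edge-defect bookkeeping you sketch is exactly what the paper does (a single move alters two graph-edges, so the composed map has $\mathrm{ED}\leq 2$ in case~(i) and $\leq 4$ in case~(ii), with support of size $\geq 3$).

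The one place where you are looser than the paper is the passage from statements about $\mathbb{P}_N$ on $\Omega_{2g-2}$ to statements about proportions of isomorphism classes in $\mathcal{U}_{2g-2}$. Theorems~\ref{thm_aut} and~\ref{thm_bol} and Proposition~\ref{prp_symm1} are all formulated on $\Omega_N$, and the fibres of $\Omega_N\to\mathcal{U}_N$ have size $6^N N!/(2^{X_1+X_2}\abs{\mathrm{Aut}})$, so ``a.a.s.\ in $\Omega_N$'' does not transfer to ``a.a.s.\ in $\mathcal{U}_N$'' without controlling these weights. The paper sidesteps this by restricting to the subset $\mathcal{U}^*_{2g-2}$ of \emph{simple} graphs with trivial automorphism group, over which the fibres have constant size; the probabilistic estimates then transfer verbatim, and the final count uses $I^*_{2g-2}$ (Theorem~\ref{thm_BC}) rather than $I_{2g-2}$. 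This is precisely why the stated $c_1=\frac{1}{3e\sqrt{\pi}}$ carries an extra factor $e^{-4}$ relative to the $\frac{e^3}{3\sqrt{\pi}}$ your claimed $q\sim(3g-3)I_{2g-2}$ would yield. Your stronger claim is plausible and could be pushed through by carrying the $2^{X_1+X_2}$ weights as in the proof of Theorem~\ref{thm_multigraphs1}, but since you only need the stated $c_1$ and explicitly note the room to spare, the cleanest fix is to mirror the paper: restrict the lower-bound count to simple automorphism-free graphs and quote Theorem~\ref{thm_BC} at the end.
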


\begin{proof} We start with the upper bound. In every pants decomposition $P$ there are $3g-3$ curves on which we can at most perform $2$ elementary moves that make a difference and are distinct on the graph level (those of Figures \ref{pic5} and \ref{pic6}). This means that in $\MP{\Sigma_g}$:
\begin{equation*}
\deg(P) \leq 6g-6
\end{equation*}
The upper bound now follows directly by applying Theorem \ref{thm_multigraphs1} and Proposition \ref{prp_general}.

The lower bound is more difficult to obtain. What we need is a lower bound on the number of edges emanating from a `generic' pants decomposition in $\MP{\Sigma_g}$. More precicely, given $m\in\mathbb{N}$ we want to understand the ratio:
\begin{equation*}
\frac{\aant{\verz{P\in\MP{\Sigma_g}}{\deg(P)\geq 6g-6-m}}}{\aant{\MP{\Sigma_g}}}
\end{equation*}

To do this we will use the results from random cubic graphs from Section \ref{sec_randgr}. We define the following two sets:
\begin{equation*}
\mathcal{G}_{2g-2}=\left\{\text{Cubic graphs with vertex set }\{1,\ldots,2g-2\} \right\}
\end{equation*}
\begin{equation*}
\mathcal{U}_{2g-2}=\left\{\text{Isomorphism classes of cubic graphs on }2g-2\text{ vertices}\right\}
\end{equation*}
$\mathcal{U}_{2g-2}$ can be identified with the vertex set of $\MP{\Sigma_g}$ and we have two forgetful maps:
\begin{equation*}
\Omega_{2g-2}\xrightarrow{\pi_1}\mathcal{G}_{2g-2}\xrightarrow{\pi_2}\mathcal{U}_{2g-2}
\end{equation*}
Furthermore, if $\Gamma\in\mathcal{U}_{2g-2}$ and $G\in\mathcal{G}_{2g-2}$ have $k$ $1$-circuits and $l$ $2$-circuits, then we have:
\begin{equation*}
\aant{\pi_1^{-1}(G)} = \frac{6^{2g-2}}{2^k2^l} \text{ and } \aant{\pi_2^{-1}(\Gamma)} = \frac{(2g-2)!}{\aant{\mathrm{Aut}(\Gamma)}}
\end{equation*}
We want to relate probabilities in $\Omega_{2g-2}$ to probabilities in $\mathcal{U}_{2g-2}$. From the two equations above we see that $1$-circuits, $2$-circuits and automorphisms cause trouble when relating these probabilities. To avoid this, we will consider only graphs in $\mathcal{U}_{2g-2}$ that don't have any such circuits. We define:
\begin{equation*}
\mathcal{U}_{2g-2}^*:=\verz{\Gamma\in\mathcal{U}_{2g-2}}{\Gamma\text{ carries no non-trivial automorphisms and has no }1\text{- and }2\text{-circuits}}
\end{equation*}

If $A\subset\mathcal{U}_{2g-2}^*$ then:
\begin{align*}
\frac{\aant{A}}{\aant{\mathcal{U}_{2g-2}^*}} & = \frac{\aant{\pi_1^{-1}\circ\pi_2^{-1}(A)}}{\aant{\pi_1^{-1}\circ\pi_2^{-1}(\mathcal{U}_{2g-2}^*)}} \\
& \geq \frac{\aant{\pi_1^{-1}\circ\pi_2^{-1}(A)}}{\aant{\Omega_{2g-2}}} \\
& = \Pro{2g-2}{\pi_1^{-1}\circ\pi_2^{-1}(A)}
\end{align*}

So what remains is to translate the statement `$\deg(P)\geq 6g-6-m$' into the language of graphs. More precisely, we need to find a necessary condition for a vertex in  $\MP{\Sigma_g}$ to have less than $6g-6-m$ edges and this condition needs to be phrased in terms of the graph dual to the pants decomposition of that vertex. There are essentially two ways in which a pants decomposition $P$ can `lose' an outgoing edge:
\vspace*{-0.1in}
\begin{itemize}[leftmargin=0.4in]
\item[(i)] $P$ shares an edge with a pants decomposition $P'$ that is isomorphic to $P$.
\item[(ii)] $P$ shares edges with two pants decompositions $P_1$ and $P_2$ that are isomorphic to each other.
\end{itemize} 
We start with situation (i). The first possibility in this case is that $P$ contains a curve on which no elementary move makes a difference in the local adjacency structure. This means that it comes from a loop in the dual graph, which we will exclude by considering $\mathcal{U}^*_{2g-2}$. If this is not the case, we proceed as follows. We label the pairs of pants in $P$ and $P'$ with the labels $1$,$2$,...,$2g-2$ consistently (i.e. the pairs of pants that are `the same' in $P$ and $P'$ get the same label) and consider the map
\begin{equation*}
F:P\rightarrow P'
\end{equation*}
that sends vertex $i$ to vertex $i$ for all $i=1,...,2g-2$. Because $P$ and $P'$ are related by an elementary move, $F$ preserves the adjacency structure everywhere except around the curve on which the move was performed. Here $F$ changes $2$ edges. By assumption we have an isomorphism $\varphi:P'\rightarrow P$. Because the adjacency structure around the curve on which the move is performed is different in $P$ and $P'$, the map $\varphi$ acts non-trivially on the labels. This means that the map:
\begin{equation*}
\varphi\circ F:P\rightarrow P
\end{equation*}
is a non-trivial element in $\symm{2g-2}$ with:
\begin{equation*}
\mathrm{ED}_{\varphi\circ F}(P) \leq 2
\end{equation*}
This is the graph theoretic description we need.

In situation (ii) we exclude loops again and we consider the maps $F_1:P\rightarrow P_1$ and $F_2:P\rightarrow P_2$ and the isomorphism $\varphi:P_1\rightarrow P_2$. The map:
\begin{equation*}
F_2^{-1}\circ\varphi\circ F_1: P \rightarrow P
\end{equation*}
is now a non-trivial map with edge defect: 
\begin{equation*}
\mathrm{ED}_{F_2^{-1}\circ\varphi\circ F_1}(P)\leq 4
\end{equation*}

Also note that the maps $\varphi\circ F$ and $F_2^{-1}\circ\varphi\circ F_1$ can not be two cycles, because they both have to move the two vertices around the edge on which the move was performed and cannot interchange these. So in particular, their supports contain at least $3$ vertices.

Every such map collapses at most $1$ of the $6g-6$ edges. So we get that:
\begin{equation*}
\Pro{2g-2}{\deg(P)\geq 6g-6-m} \geq 1-\Pro{2g-2}{\substack{\displaystyle{\exists \pi_1,\ldots,\pi_m\in\symm{2g-2}\backslash\{\mathrm{id}\}}\\ \displaystyle{\text{such that: }\mathrm{ED}_{\pi_i}\leq 4\;\forall i=1,\ldots, m} \\ \displaystyle{\text{and }X_{2g-2,1}=X_{2g-2,1}=0} \\ \displaystyle{\text{and }\mathrm{Aut}(\Gamma)=\{e\}}}}
\end{equation*}
where, with a slight abuse of notation, we have identified a subset of $\mathcal{U}^*_{2g-2}$ with its inverse image under $\pi_2\circ\pi_1$. We now use Proposition \ref{prp_symm1} that tells us that there exists a $C>0$ independent of $m$ such that:
\begin{equation*}
\Pro{2g-2}{\substack{\displaystyle{\exists \pi_1,\ldots,\pi_m\in\symm{2g-2}\backslash\{\mathrm{id}\}}\\ \displaystyle{\text{such that: }\mathrm{ED}_{\pi_i}\leq 4\;\forall i=1,\ldots, m} \\ \displaystyle{\text{and }X_{2g-2,1}=X_{2g-2,1}=0} \\ \displaystyle{\text{and }\mathrm{Aut}(\Gamma)=\{e\}}}} \lesssim \Pro{2g-2}{\substack{\displaystyle{X_{2g-2,1}=X_{2g-2,1}=0} \\ \displaystyle{\text{and }\sum\limits_{i=3}^{4} X_{2g-2,i} \geq Cm} }}
\end{equation*}
for $g\rightarrow\infty$. If we now apply Theorem \ref{thm_bol} we get:
\begin{equation*}
\Pro{2g-2}{\substack{\displaystyle{X_{2g-2,1}=X_{2g-2,1}=0} \\ \displaystyle{\text{and }\sum\limits_{i=3}^{4} X_{2g-2,i} < Cm} }} \sim \frac{1}{e^2} \cdot \Pro{2g-2}{\sum\limits_{i=3}^{4} X_{2g-2,i} \geq Cm}
\end{equation*}
for $g\rightarrow\infty$. Using Theorem \ref{thm_bol} again, the limit of this last probability can be made arbitrarily small by increasing $m$. This means that we have proved that for every $\delta>0$ there exists an $m_\delta$ independent of $g$ such that for all $g$ large enough there exists at least:
\begin{equation*}
(1-\delta)\cdot\aant{\mathcal{U}^*_{2g-2}}
\end{equation*}
vertices of degree $6g-6-m_\delta$ in $\MP{\Sigma_g}$. Applying Theorem \ref{thm_BC} now finishes the proof.
\end{proof}

\section{The genus of the (modular) flip graph}

For the flip graph we have the same result as for the curve graph and the pants graph:
\begin{thm}\label{thm_fg} Let $g\geq 2$ and $n>0$. We have:
$$
\gamma(\FG{\Sigma_{g,n}}) = \infty
$$
\end{thm}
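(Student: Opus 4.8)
The plan is to mimic the strategy already used for the curve graph (Theorem~\ref{thm_cg}) and the pants graph (Theorem~\ref{thm_pg}): exhibit, for arbitrarily large $m$, a finite subgraph of $\FG{\Sigma_{g,n}}$ whose genus grows without bound, and then invoke the fact that genus is monotone under taking subgraphs together with Proposition~\ref{prp_general}. Since $\gamma(\FG{\Sigma_{g,n}})\geq\gamma(Z)$ for every finite subgraph $Z$, it suffices to produce a sequence $Z_m\subset\FG{\Sigma_{g,n}}$ with $\gamma(Z_m)\to\infty$.

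To build $Z_m$ I would work locally in the surface. Fix a triangulation $T$ of $\Sigma_{g,n}$ and locate inside it a sub-surface (a one-holed torus or a four-times-punctured sphere, which exists since $g\geq 2$ and $n>0$ guarantee enough complexity) carrying two disjoint essential arcs $a_1$, $a_2$ of $T$ that can each be flipped, and whose flips can be iterated independently. Concretely, pick arcs $a_1,a_2$ lying in two ``independent'' parts of the triangulation, together with auxiliary arcs so that repeatedly flipping in a fixed quadrilateral containing $a_i$ produces a sequence of $m+1$ distinct triangulations $T_i^0=T, T_i^1,\ldots,T_i^m$ (this is the standard ``Dehn-twist-by-flips'' picture, analogous to the $D_{\alpha_i}$ construction in Theorem~\ref{thm_pg}). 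Because the two quadrilaterals are disjoint, flipping $a_1$ and flipping $a_2$ commute, so each pair $(T_1^j,T_2^k)$ gives a well-defined triangulation $T(j,k)$ of $\Sigma_{g,n}$, and the flip-graph edges between these triangulations contain a copy of the Cartesian product of two paths — in fact, by replacing the iterated flip in one slot by a genuine flip we can arrange a $K_{m,m}$ exactly as in the curve-graph argument, or at minimum a grid-like graph on $(m+1)^2$ vertices in which the $(m-1)^2$ interior vertices have degree bounded below by a constant (here by the number of flippable arcs touching the two quadrilaterals plus the two ``move'' edges), while girth stays $\geq 3$. Then $p(Z_m)=(m+1)^2$ and $q(Z_m)\gtrsim m^2$, so Proposition~\ref{prp_general} gives $\gamma(Z_m)\gtrsim m^2$.

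The main obstacle is the verification that the triangulations in the family are genuinely pairwise non-isotopic and that the claimed edges really are edges of the flip graph, i.e. that the local moves I perform are honest flips and do not accidentally coincide after isotopy. This is exactly the subtlety that ``a triangle may be glued to itself'' in the definition forces one to check: one must ensure the arcs $a_i$ lie in two distinct triangles (so they are flippable) and that iterating the flip inside a quadrilateral does not return to a previous triangulation. The cleanest way around this is to choose the auxiliary arcs so that each slot literally realizes powers of a Dehn twist about a simple closed curve in the relevant subsurface acting on the arc $a_i$; distinctness of $D_{c_i}^j a_i$ for $j=0,\ldots,m$ then follows from the faithfulness of the mapping class group action on isotopy classes of arcs, and disjointness of the supports of $D_{c_1}$ and $D_{c_2}$ gives commutativity. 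With that in hand the count of vertices, edges, and girth is routine and the conclusion $\gamma(\FG{\Sigma_{g,n}})=\infty$ is immediate; one should also remark that the argument is uniform in $n\geq 1$ since the extra punctures only add complexity.
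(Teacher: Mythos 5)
Your high-level strategy matches the paper: exhibit finite subgraphs of growing genus built from independent families of flips in disjoint sub\-surfaces, and appeal to monotonicity of genus and Proposition~\ref{prp_general}. But the concrete subgraph you propose does not have growing genus, and the fallback you offer does not exist in the flip graph.

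The issue with $K_{m,m}$: in the curve graph, $D_{\beta_1}^j\alpha_1$ and $D_{\beta_2}^k\alpha_2$ are adjacent for \emph{every} pair $(j,k)$ because adjacency there means disjointness, and the twist supports are disjoint. In the flip graph, adjacency means ``differ by exactly one flip.'' If $T(j,k)$ and $T(j',k')$ satisfy $j\neq j'$ and $k\neq k'$ then they differ in arcs in \emph{both} quadrilaterals, hence by at least two flips, hence are not adjacent. So the induced subgraph on your $(m+1)^2$ triangulations is not a complete bipartite graph; it is (at best) a grid, i.e., the Cartesian product of two paths. The issue with the grid: the Cartesian product of two paths is planar, so $\gamma(Z_m)=0$ for all $m$ and Proposition~\ref{prp_general} gives nothing. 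Indeed, for a grid on $p\approx m^2$ vertices with $q\approx 2m^2$ edges and girth $4$, the lower bound of Proposition~\ref{prp_general} is $1+\tfrac{q}{4}-\tfrac{p}{2}\approx 1-\tfrac{m}{2}<0$. Your remark that interior vertices have ``degree bounded below by a constant'' would only help if that constant exceeded $4$ (with girth $4$) or $6$ (with girth $3$), and you do not establish that; the four grid neighbours are the only edges your construction visibly provides.

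The missing ingredient, which is what the paper's proof supplies, is a \emph{third} independent slot. The paper chooses the triangulation so that it contains two disjoint copies of a cylinder triangulation $T_0$ \emph{and} a quadrilateral, whose flip graph is $K_2$. Taking all combinations yields a subgraph of the form $\mathbb{Z}^2\times\{0,1\}$ with edges between vertices at $\ell^1$-distance $1$. This ``thickened grid'' has girth $4$ and roughly $\tfrac{5}{2}p$ edges, which is enough for Proposition~\ref{prp_general} to give a quadratically growing lower bound on genus. To repair your argument, add a third disjoint flippable region (e.g., the quadrilateral from the paper, or a third cylinder) so that your subgraph becomes a product of two long paths with an edge (or a third path); the rest of your outline — commutativity from disjoint supports, distinctness via Dehn twists acting on arcs — then goes through essentially as in the paper.
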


\begin{proof} We will proof this by embedding a graph with vertex set $\mathbb{Z}^2\times \{0,1\}$ and edges:
\begin{equation*}
x\sim y \; \Leftrightarrow \; \abs{x-y}=1
\end{equation*}

We first consider the triangulation $T_0$ of the cylinder given in the first image of Figure \ref{pic9} below:
\begin{figure}[H]
\begin{center} 
\includegraphics[scale=0.8]{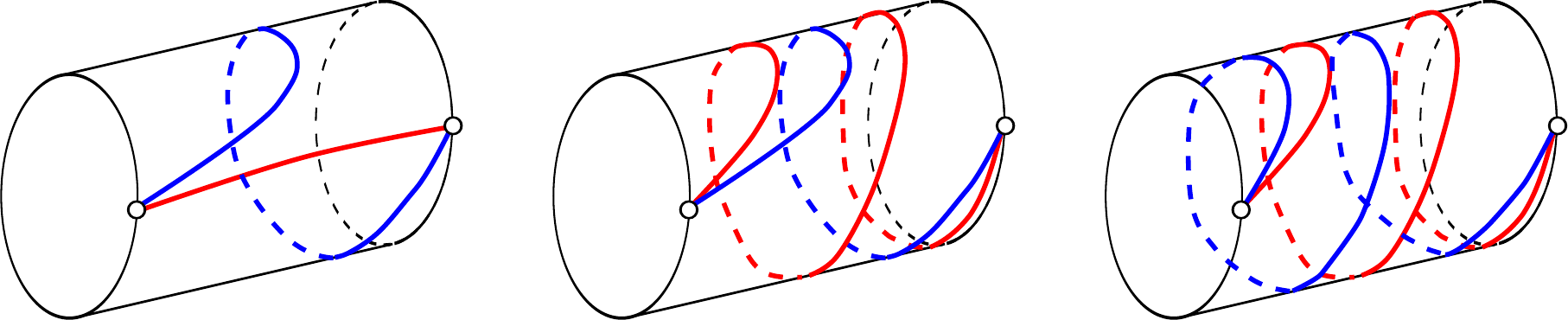} 
\caption{$T_0$, $F_rT_1$ and $T_1$.}
\label{pic9}
\end{center}
\end{figure}
Let $F_r$ denote the flip in the red curve and $F_b$ the flip in the blue curve. We now define the sequence of triangulations: $\rij{T_i}{i\in\mathbb{Z}}{}$ inductively by:
\begin{equation*}
T_{i+1} = F_bF_r T_i\text{ and } T_{i-1}= F_rF_b T_i
\end{equation*}

This means that given a triangulation of $\Sigma_{g,1}$ that contains a cylinder, we obtain a copy of $\mathbb{Z}$ in the flip graph.

We choose a triangulation $T$ of $\Sigma_{g,1}$ that contains two copies of $T_0$, with red and blue arcs $r_1,r_2$ and $b_1,b_2$ respectively and a quadrilateral. Such a triangulation $T$ exists because of our condition on $g$. The quadrilateral has flip graph $K_2$. This means that we get a set of vertices which we can label by $\rij{(T_i, T_j, k)}{(i,j,k)\in\mathbb{Z}^2\times\{0,1\}}{}$. These induce the graph above, which has infinite genus (this follows from Proposition \ref{prp_general}).
\end{proof}

For the modular flip graph we will focus on $1$-vertex triangulations. We have the following theorem:

\begin{thm}\label{thm_mf} We have:
$$
\gamma\left(\MF{\Sigma_{g,1}}\right) \;\sim_{c_1,c_2}\; \frac{1}{(4g-2)^{3/2}} \left(\frac{12g-6}{e}\right)^{2g}
$$
for $g\rightarrow\infty$, where $c_1=\frac{e}{18\sqrt{\pi}}$ and $c_2= \frac{e}{6\sqrt{\pi}}$.
\end{thm}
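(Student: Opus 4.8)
\textbf{Proof proposal for Theorem \ref{thm_mf}.}

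The plan is to mirror the structure of the proof of Theorem \ref{thm_mp}, replacing cubic multigraphs with oriented cubic multigraphs encoding $1$-vertex triangulations of $\Sigma_{g,1}$. The degree of a vertex $T$ in $\MF{\Sigma_{g,1}}$ is controlled by counting arcs on which a flip makes a difference at the level of homeomorphism type, and there are $6g-3$ arcs in a triangulation of $\Sigma_{g,1}$ (by the Euler characteristic computation already noted, which also gives $4g-2$ triangles, i.e. $N=4g-2$ vertices in the dual trivalent graph). Each arc admits a flip, and each flip can be viewed on the dual oriented graph as an operation on the two triangles adjacent to the arc; so a priori $\deg(T)\le 6g-3$, possibly minus a bounded correction for flips that are trivial on the surface or that are forced to coincide. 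This gives the upper bound: combine the bound $\deg(T)=O(g)$ with $q\le \frac{1}{2}(6g-3)\cdot I_{g,1}$, girth $\ge 3$, and Proposition \ref{prp_general}, then feed in the asymptotics of $I_{g,1}$ from Theorem \ref{thm_cardfg2}. Tracking the constants through $\frac{1}{2}q \sim \frac{1}{2}(6g-3) I_{g,1}$ and $I_{g,1}\sim \frac{2}{3\sqrt{\pi}(4g-2)^{3/2}}(\frac{12g-6}{e})^{2g-1}$ should produce $c_2 = \frac{e}{6\sqrt{\pi}}$ after simplification.

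For the lower bound I would again argue that a \emph{generic} $1$-vertex triangulation has degree at least $6g-3-m$ for a constant $m$ depending only on a chosen error $\delta$, not on $g$. As in the pants case, a triangulation $T$ can lose an outgoing edge in the modular flip graph only if either (i) a flip produces a triangulation isomorphic to $T$, or (ii) two distinct flips produce isomorphic triangulations. In either case, composing the forgetful labelled maps $F_i: T \to T_i$ with the isomorphism between targets yields a non-trivial permutation $\pi \in \symm{4g-2}$ with edge defect $\le 4$ on the dual oriented graph, whose support contains at least $3$ vertices (the vertices adjacent to the flipped arc must move and cannot simply be swapped). Here the key point is that Proposition \ref{prp_symm2} — the orientation-restricted analogue of Proposition \ref{prp_symm1}, valid when restricting to $\omega\in\Omega_N$ with $S(\omega)$ having one puncture — applies verbatim, since the obstruction subgraphs contain no left-hand-turn cycle and Lemma 6.3 of \cite{Pet} gives the needed probability bound $\ProC{N}{H\subset\Gamma}{S\text{ has }1\text{ puncture}} \le C N^{-\abs{E(H)}}$. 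Combined with Proposition \ref{prp_circ} (the orientation-restricted Poisson limit for short circuits), exactly the argument of Theorem \ref{thm_mp} shows: for every $\delta>0$ there is $m_\delta$, independent of $g$, such that at least $(1-\delta)I_{g,1}$ vertices of $\MF{\Sigma_{g,1}}$ have degree $\ge 6g-3-m_\delta$. Then $q(\MF{\Sigma_{g,1}}) \ge \frac{1}{2}(1-\delta)(6g-3-m_\delta) I_{g,1}$, and Proposition \ref{prp_general} with girth $\ge 3$ gives $\gamma \gtrsim (\frac{1}{2}-\frac{1}{3}) q = \frac{1}{6}q$, yielding the lower constant $c_1 = \frac{e}{18\sqrt{\pi}}$ after substituting the asymptotics of $I_{g,1}$ and sending $\delta \to 0$.

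The main obstacle I anticipate is bookkeeping rather than conceptual: precisely pinning down how a flip acts on the dual oriented trivalent graph, and hence the exact edge-defect bound (I claimed $\le 4$, matching the pants case, but a flip changing an arc shared by two triangles might touch up to four half-edge incidences, so one must check the defect is genuinely bounded by a small constant and that its support has $\ge 3$ vertices in all degenerate configurations — e.g. when the two triangles coincide). One must also verify that the handful of flips which are trivial on homeomorphism type, or which collapse together, contribute only a bounded deficiency to $\deg(T)$ uniformly in $g$, so that $\deg(T) = 6g - 3 - O(1)$ generically; this is the analogue of discarding Elementary move 3 in the pants graph. Once the combinatorial dictionary between flips and dual-graph moves is fixed, everything else is a direct transcription of the pants argument with Theorems \ref{thm_cardfg1}, \ref{thm_cardfg2}, Propositions \ref{prp_circ}, \ref{prp_symm2} in place of their unoriented counterparts, and a final constant-chasing through Stirling's approximation and Proposition \ref{prp_general}.
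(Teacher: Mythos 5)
Your proposal takes essentially the same route as the paper: bound the degree of a vertex of $\MF{\Sigma_{g,1}}$ by the number $6g-3$ of arcs for the upper bound, and for the lower bound show that a generic $1$-vertex triangulation has degree $\geq 6g-3-m_\delta$ by translating a degree drop into a small-support permutation of the dual oriented cubic graph with bounded edge defect, then invoking the orientation-restricted Propositions \ref{prp_symm2} and \ref{prp_circ} together with Theorem \ref{thm_cardfg2} and Proposition \ref{prp_general}, exactly as the paper does (including the observation that loops in the dual graph don't occur for $1$-vertex triangulations). One small arithmetic wobble: you correctly write $q\le\frac{1}{2}(6g-3)I_{g,1}$ but then track $\frac{1}{2}q\sim\frac{1}{2}(6g-3)I_{g,1}$ rather than $\frac{1}{4}(6g-3)I_{g,1}$; this reproduces the paper's stated $c_1,c_2$, so either the slip is shared with the paper or a convention I'm not seeing absorbs the factor of $2$ — worth double-checking, but it does not affect the structure of the argument.
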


\begin{proof} We will use the results on random triangulated surfaces from Section \ref{sec_randsurf}. The proof is analogous to the proof for the modular pants graph. The main thing we need to control is the degree of the vertices of $\MF{\Sigma_{g,1}}$.

For the upper bound we use the fact that every triangulation $T\in\MF{\Sigma_{g,1}}$ contains $6g-3$ arcs, all of which can be flipped. This means that $\deg(T)\leq 6g-3$. Combining this with Theorem \ref{thm_cardfg1} and Proposition \ref{prp_general} gives the upper bound.

For the lower bound we again study the ratio of degree $\geq 6g-3-m$ vertices for $m\in\mathbb{N}$. In this case we do not need to worry about loops in the dual graph, because a $1$-vertex triangulation cannot contain such loops. We define the sets:
\begin{equation*}
\mathcal{O}_{g}=\left\{\text{Triangulations of $\Sigma_{g,1}$ with triangles labelled }\{1,\ldots,4g-2\} \right\}
\end{equation*}
\begin{equation*}
\mathcal{W}_{g}=\left\{\text{Isomorphism classes of triangulations of $\Sigma_{g,1}$}\right\}
\end{equation*}
So $\mathcal{W}_{4g-2}$ is the vertex set of $\MF{\Sigma_{g,1}}$ and we again have forgetful maps:
\begin{equation*}
\verz{\omega\in\Omega_{4g-2}}{S(\omega)\simeq \Sigma_{g,1}}\xrightarrow{\pi_1}\mathcal{G}_{2g-2}\xrightarrow{\pi_2}\mathcal{U}_{2g-2}
\end{equation*}
where `$\simeq$' denotes homeomorphism. We also define the set:
\begin{equation*}
\mathcal{W}_{g}^*=\verz{T\in\mathcal{W}_g}{\text{The dual graph to }T\text{ carries no non-trivial automorphism}}
\end{equation*}
Note that because in this case $2$-circuits no longer add a factor $2$ (like in the proof of Theorem \ref{thm_cardfg2}), we no longer need to add conditions on $2$ circuits in the definition of $\mathcal{W}_g^*$. Again we obtain:
\begin{equation*}
\frac{\aant{A}}{\aant{\mathcal{W}_{g}^*}}  \geq \ProC{4g-2}{\pi_1^{-1}\circ\pi_2^{-1}(A)}{S\simeq \Sigma_{g,1}} \text{ for all }A\subset\mathcal{W}^*_g
\end{equation*}
Applying the same reasoning as in the proof for $\MP{\Sigma_g}$ (but now using Proposition \ref{prp_symm2}), we obtain that there exists a constant $C>0$ independent of $m$ such that:
\begin{equation*}
\ProC{4g-2}{\deg(T)\geq 6g-3-m}{S\simeq \Sigma_{g,1}} \geq 1- \ProC{4g-2}{\sum\limits_{i=2}^{4} X_{2g-2,i} \geq Cm}{S\simeq \Sigma_{g,1}}
\end{equation*}
where we have abused notation slightly, by identifying $\pi_1^{-1}\circ\pi_2^{-1}(\mathcal{W}_g^*)$ with $\mathcal{W}_g^*$. The probability on the right hand side can be made arbitrarily small by increasing $m$. This, combined with Theorem \ref{thm_cardfg2} and Proposition \ref{prp_general} implies the lower bound.
\end{proof}

\nocite{*}
\bibliographystyle{alpha}

\end{document}